\documentclass[11pt]{amsart}
\usepackage{amssymb,amsmath,amsthm}
\usepackage[initials]{amsrefs}
\usepackage{bm}
\usepackage{fullpage}
\usepackage{enumerate}
\usepackage{verbatim}
\usepackage{cite}

\usepackage[hidelinks]{hyperref}

\DeclareFontFamily{U}{mathx}{\hyphenchar\font45}
\DeclareFontShape{U}{mathx}{m}{n}{
      <5> <6> <7> <8> <9> <10>
      <10.95> <12> <14.4> <17.28> <20.74> <24.88>
      mathx10
      }{}
\DeclareSymbolFont{mathx}{U}{mathx}{m}{n}
\DeclareFontSubstitution{U}{mathx}{m}{n}
\DeclareMathAccent{\widecheck}{0}{mathx}{"71}

\newtheorem{theorem}[equation]{Theorem}
\newtheorem{lemma}[equation]{Lemma}
\newtheorem{prop}[equation]{Proposition}
\newtheorem{cor}[equation]{Corollary}

\newtheorem{remark}[equation]{Remark}

\numberwithin{equation}{section}

\newcommand{\R}{\mathbb{R}}

\newcommand{\Z}{\mathbb{Z}}

\newcommand{\Sph}{\mathbb{S}}

\newcommand{\Met}{\operatorname{Met}}

\newcommand{\madm}{m_{_{ADM}}}
\newcommand{\mh}{m_{_H}}
\newcommand{\mby}{m_{_{BY}}}
\newcommand{\mb}{m_{_B}}
\newcommand{\ms}{m_{_S}}

\newcommand{\Scal}{\mathcal{S}}
\newcommand{\Bcal}{\mathcal{B}}
\newcommand{\Hcal}{\mathcal{H}}

\newcommand{\Sdot}{\dot{\Scal}}
\newcommand{\Bdot}{\dot{\Bcal}}

\newcommand{\Ric}{\operatorname{Ric}}

\renewcommand{\div}{\operatorname{div}}
\newcommand{\tr}{\operatorname{tr}}

\newcommand{\up}[2]{\overset{\scriptscriptstyle #2}{#1}\vphantom{#1}}
\newcommand{\upp}[2]{\overset{\scriptscriptstyle (#2)}{#1}\vphantom{#1}}
\newcommand{\trl}[1]{\underset{\circ}{#1}\vphantom{#1}}

\newcommand{\abs}[1]{\left\lvert#1\right\rvert}
\newcommand{\norm}[1]{\left\|#1\right\|}

\title{Second-order mass estimates
       for static vacuum metrics
       with small Bartnik data}

\author{David Wiygul}
\address{
  Università degli studi di Trento
  Via Sommarive, 14 - 38123 Povo TN, Italy
}
\email{davidjames.wiygul@unitn.it}

\begin{document}

\begin{abstract}
Given on the $2$-sphere Bartnik data
(prescribed metric and mean curvature)
that is a small perturbation
of the corresponding data for the standard unit sphere in Euclidean space,
we estimate to second order,
in the size of the perturbation,
the mass of the asymptotically flat static vacuum extension
(unique up to diffeomorphism)
which is a small perturbation
of the flat metric
on the exterior of the unit ball in Euclidean space
and induces the prescribed data on the boundary sphere.
As an application we obtain
a new upper bound
on the Bartnik mass of small metric spheres
to fifth order in the radius.
\end{abstract}

\maketitle

\section{Motivation and statement of the results}
\label{intro}
Bartnik's definition of quasilocal mass
(\cites{BarNew, BarICM, BarTHL})
has inspired a number of interesting
questions and results in mathematical relativity;
see for example the surveys
\cites{AndSurvey, CPC}.
The present article
is motivated by a desire
to refine existing estimates
of the Bartnik mass of $2$-spheres
with small data,
meaning metric and mean curvature
close to those of the standard sphere.
Over time Bartnik's original definition
has given rise in the literature to several variants,
whose relation has been analyzed 
in \cites{Jdef, McDef},
but here
we adopt a somewhat restrictive version,
defined
in \eqref{mbdef},
after establishing some notation
and preliminary definitions.
The statement of our results begins on page \pageref{mainthm}.

\subsection*{Basic definitions and notation}
Set
$M:=\{\vec{x} \in \R^3 \; : \; \abs{\vec{x}} \geq 1\}$
and let
$\iota: \partial M \to M$
be the inclusion map for its boundary $\Sph^2:=\partial M$.
Given a Riemannian metric $g=g_{ab}$ on $M$,
we write $R_{abcd}[g]=R_{abcd}$,
$R_{ab}[g]=R_{ab}$,
and $R[g]=R$
for the corresponding Riemann, Ricci, and scalar curvature
of $g$; 
our conventions for $R_{abcd}$
are declared in Appendix \ref{Riem}.
We also write $\Hcal[\iota,g]$ for
the corresponding mean curvature function induced on $\partial M$:
by our convention
the $g$-divergence of the outward
(so pointing into the unit ball)
$g$-unit normal on $\partial M$.
The volume and area densities
induced by $g$ and $\iota^*g$ respectively
will be denoted by $\sqrt{\abs{g}}$
and $\sqrt{\abs{\iota^*g}}$.
We reserve $\delta=\delta_{ab}$ for the Euclidean metric on $\R^3$
(and its restriction to $M$)
and caution that by our convention $\Hcal[\iota,\delta]=-2$.
In integrals of functions on $M$ and $\Sph^2$
the densities $\sqrt{\abs{\delta}}$ and $\sqrt{\abs{\iota^*\delta}}$
respectively
should be understood,
unless a different density is explicitly specified.

In tensor expressions we will use Roman indices
for components in $M$
and Greek indices for components in $\partial M$.
Mostly we employ abstract index notation
(that is coordinate-free tensor notation),
but we make two types of occasional exceptions.
First, we will sometimes work
in the standard Cartesian coordinates
on $\R^3$,
for which purpose we reserve
the indices $i,j,k$;
second, the label $r$ will always
refer to the standard radial coordinate
on $\R^3$.
Roman indices will be raised and lowered
via $\delta$ and Greek via $\iota^*\delta$.
We shall use a semicolon (in the case of fields over $M$)
or colon (in the case of fields over $\partial M$)
to indicate differentiation
with respect to the Levi-Civita connection
induced by $\delta$ or $\iota^*\delta$ respectively;
in the case of covariant differentiation
of functions
(which does not depend on the choice of metric)
and in the case of coordinate differentiation
(with respect to a Cartesian or radial coordinate)
we prefer to use a comma instead.

Given $k \in \Z \cap [0,\infty)$,
$\alpha \in [0,1)$,
$\beta \in \R$,
and a $C^k_{loc}$ tensor field $F$ over a manifold $S$,
possibly with boundary, smoothly embedded in $\R^3$
(for example $S=M$ or $S=\partial M$),
we define the standard
H\"{o}lder norm
\begin{equation}
  \norm{F}_{k,\alpha}
  :=
  \sum_{i=0}^k \sup_{\vec{x} \in S} 
        \abs{D_\delta^iF(\vec{x})}_{\delta}
    +\sup_{\vec{x} \neq \vec{y} \in S} 
      \frac{\abs{D_\delta^kF(\vec{x})- D_\delta^kF(\vec{y})}_\delta}
        {\abs{\vec{x}-\vec{y}}^\alpha}
\end{equation}
and weighted H\"{o}lder norm
  \begin{equation}
  \begin{aligned}
    \norm{F}_{k,\alpha,\beta} 
    :=
    &\sum_{i=0}^k \sup_{\vec{x} \in S} 
        (1+\abs{\vec{x}})^{\beta+i}\abs{D_\delta^iF(\vec{x})}_{\delta} \\
    &+\sup_{\vec{x} \neq \vec{y} \in S} 
      \left[
        1+\min \left\{\abs{\vec{x}},\abs{\vec{y}}\right\}
      \right]^{\alpha+\beta+k}
        \frac{\abs{D_\delta^kF(\vec{x})- D_\delta^kF(\vec{y})}_\delta}
        {\abs{\vec{x}-\vec{y}}^\alpha},
  \end{aligned}
  \end{equation}
where the derivatives $D_\delta$ and differences
are taken componentwise relative
to the standard Cartesian coordinates $\{x^1,x^2,x^3\}$ on $\R^3$.
For each tensor bundle $E$ over $S$
we define the Banach spaces
$C^{k,\alpha,\beta}(E)$ and $C^{k,\alpha}(E)$
(written simply $C^{k,\alpha,\beta}(S)$ and $C^{k,\alpha}(S)$ as usual
when $E$ is the trivial bundle $S \times \R$)
of sections of $E$ with finite $\norm{\cdot}_{k,\alpha,\beta}$ 
and $\norm{\cdot}_{k,\alpha}$ norms respectively.
(Frequently the letters $\alpha$ and $\beta$
will also appear as indices for tensors over $\partial M$,
but this double duty should never cause confusion.)

We will make routine use of the standard
asymptotic ``big O'' notation.
Specifically,
suppose $D$ (for data) is a set, $\mathcal{X}$ is a vector space equipped with norm $\norm{\cdot}$,
and $x,y: D \to \mathcal{X}$ and $c: D \to [0,\infty)$ are functions;
we write
  \begin{equation}
    x=y+O(c)
  \end{equation}
if there exist constants $C, \epsilon_0 > 0$ such that
  \begin{equation}
    \norm{x(d)-y(d)} \leq Cc(d) \mbox{ for all } d \in D
      \mbox{ with } c(d)<\epsilon_0.
  \end{equation}
In instances of this notation the set $D$, vector space $\mathcal{X}$,
and norm $\norm{\cdot}$ should always be clear from context.
Frequently $\mathcal{X}$ will simply be $\R$
with $\norm{\cdot}$ the absolute value,
and typically $D$ will consist either of Bartnik data
(prescribed metric and mean curvature on a given surface)
close to that of the standard sphere
or of metric balls with small radius
and fixed center in a given Riemannian manifold.

We write 
$\Met^{k,\alpha,\beta}(M)$
for the space of Riemannian metrics on $M$
whose difference from the Euclidean metric $\delta$
belongs to
$C^{k,\alpha,\beta}\left(T^*M^{\odot 2}\right)$.
Additionally we call $\partial M$
outer-minimizing in $(M,g)$
if its area (measured by $g$)
is no greater than that of any surface in $M$ enclosing it.
Note that $\partial M$ is outer-minimizing
in $(M,g)$ for every $g$ in a sufficiently small
$C^{1}$ neighorhood of $\delta$.

Now
to a given $C^2$ Riemannian metric
$\gamma=\gamma_{\alpha\beta}$
and $C^1$ function $H$
on $\Sph^2=\partial M$ 
we associate the Bartnik mass
  \begin{equation}
  \label{mbdef}
    \begin{aligned}
      \mb[\gamma,H]
      :=
      \inf 
      \{
      &\madm[g] \; : \;
        g \in \Met^{2,0,1}(M), \\
        &R[g] \geq 0, \; \iota^*g=\gamma, \;
         \Hcal[\iota,g] \geq H, \mbox{ and} \\ 
        &\partial M \mbox{ is outer-minimizing in } (M,g)
      \}
    \end{aligned}
  \end{equation}
(provided this infimum exists), where
\begin{equation}
\label{madmdef}
    \madm[g]
    :=
    \frac{1}{16\pi} \lim_{r \to \infty}
     \int_{\abs{x}=r}
       \left(
         g_{ik}^{\;\;\; ,k}-g^k_{\;\; k,i}
       \right)
       \frac{x^i}{\abs{x}}
  \end{equation}
is the ADM mass
(\cites{ADM, BarADM, Chr, OM})
of $(M,g)$,
commas indicating coordinate differentiation as usual.
The inequality on mean curvature
was adopted from \cites{MiaoVarEff}
and enforces
nonnegative scalar curvature across the boundary
in a distributional sense;
we remind the reader that by our conventions
the standard unit sphere in Euclidean space
has mean curvature $\Hcal[\iota, \delta]=-2$.
The outer-minimizing condition
was suggested in
\cites{Bray}.
Note that,
by 
the connectedness
(as in \cites{Munkres} or \cites{Smale})
of the space of orientation-preserving
diffeomorphisms of $\Sph^2$,
definition \eqref{mbdef} is diffeomorphism-invariant:
$\mb[\gamma, H]=\mb[\phi^*\gamma, \phi^*H]$
for any smooth diffeomorphism $\phi: \Sph^2 \to \Sph^2$.

In turn we also define the Bartnik mass
for compact Riemannian $3$-balls:
if $(\Omega, h)$ is the image
of a smooth diffeomorphism $\phi$
from origin-centered closed unit ball in $\R^3$
and if $h$ induces on $\partial \Omega$
metric $\gamma$ and mean curvature $H$
(the $h$ divergence of the inward unit normal),
then
to $(\Omega,h)$ we associate the Bartnik mass
  \begin{equation}
  \label{mbomdef}
    \mb[\Omega,h]
    :=
    \mb[\phi^*\gamma, \phi^*H].
  \end{equation}
Note that the definition
does not depend on the choice of diffeomorphism.
By 
the positive mass theorem with corners
of \cites{MiaoCorners}
(as well as the extendibility of diffeomorphisms
 between balls established in \cites{Palais})
$\mb[\Omega,h]$ exists and is nonnegative
whenever $(\Omega, h)$
admits at least one embedding
into an asymptotically flat $\R^3$ having nonnegative scalar curvature
and in which $\partial \Omega$ is outer-minimizing.
Moreover it follows from \cites{HI}
that in this case
$\mb[\Omega, h]=0$
only when $h$ is flat.

\subsection*{Quasilocal mass of small spheres}
When $\Omega$ lies in a time-symmetric slice
whose source fields contribute nonnegative energy density at each point,
definition \eqref{mbomdef} is intended
as a measure of the (quasilocally defined) mass of $\Omega$.
In this article we specialize to the case where
$\Omega$ is a metric ball $B_\tau$ of small radius $\tau$
in some such slice $(N,h)$.
Fixing $(N,h)$ and the center of $B_\tau$
while taking $\tau$ small,
the leading contribution to the mass of $B_\tau$,
according to any physically reasonable definition,
must be given by its volume times the source 
energy density at its center.
Indeed, as reviewed in a moment,
the Bartnik mass does not fail to meet this natural expectation,
and
the present article is motivated by a desire to identify
the next most significant contributions.
See for example
\cites{BrownLauYork, Chen, ChenWangYau, FanShiTam, HorSch, Sza, WangJ, Yu}
for estimates of other quasilocal masses of small spheres
in both Riemannian and spacetime settings.

For context and further motivation
we now recall analogous estimates
for two quasilocal masses which provide
well-known lower and upper bounds
for the Bartnik mass
(at least under the assumptions of interest here).
Namely, for a surface $\Sigma$ having induced metric $\gamma$
and mean curvature $H$ in some time-symmetric slice
we have the Hawking mass (\cites{HMass})
$\mh[\Sigma]$
and the Brown-York mass (\cites{BY})
$\mby[\Sigma]$
of $\Sigma$:
\begin{equation}
  \begin{aligned}
    \mh[\Sigma]
      &:=
      \sqrt{\frac{\abs{\Sigma}}{16\pi}}
        \left(1-\frac{1}{16\pi}
          \int_{\Sigma} H^2 \, \sqrt{\abs{\gamma}}\right)
      \mbox{ and} \\
    \mby[\Sigma]
      &:=
      \frac{1}{8\pi} \int_{\Sigma} (H-H_0) \, \sqrt{\abs{\gamma}},
  \end{aligned}
\end{equation}
where $\abs{\Sigma}$ is the area of $\Sigma$
and for $\mby[\Sigma]$ we assume that $\Sigma$
has positive Gaussian curvature and that
$H_0$ is the mean curvature
of an isometric embedding of $\Sigma$ in Euclidean $\R^3$
(whose existence and uniqueness up to rigid motions
 are guaranteed by
 \cites{Nir, Pog, Herglotz, Sacksteder});
we also remind the reader that $\sqrt{\abs{\gamma}}$
denotes the area density induced by $\gamma$
and that by our conventions
the standard unit sphere has mean curvature $-2$.

We now fix a smooth
Riemannian $3$-manifold $(N,h)$ having nonnegative scalar curvature,
we also fix a point $p \in N$,
and for small variable $\tau > 0$
we consider the closed metric ball $B_\tau$ of center $p$ and radius $\tau$.
It is straightforward to compute the expansion
\begin{equation}
\label{hexp}
  \mh[B_\tau]
    :=
    \mh[\partial B_\tau]
    =
    \frac{\tau^3}{12}R+\frac{\tau^5}{720}
      \left(6\Delta R - 5R^2\right) + O(\tau^6),
\end{equation}
where $R$ is the scalar curavture, $\Delta$ the Laplacian,
and all terms are evaluated at $p$;
see for example \cites{FanShiTam}.
Much less straightforward but also accomplished in
\cites{FanShiTam}
is the corresponding calculation
\begin{equation}
\label{byexp}
  \mby[B_\tau]
    :=
    \mby[\partial B_\tau]
    =
    \frac{\tau^3}{12}R+\frac{\tau^5}{1440}
      \left(24\abs{\Ric}^2-13R^2+12\Delta R\right)
      +O(\tau^6),
\end{equation}
where $\abs{\Ric}^2$ is the squared norm of the Ricci curvature
(and again all terms are evaluated at $p$).

With \eqref{mbdef} as our definition of Bartnik mass
(and also under more general conditions)
we have the well-known inequalities
\begin{equation}
\label{sandwich}
  \mh[B_\tau] \leq \mb[B_\tau] \leq \mby[B_\tau],
\end{equation}
where the lower bound follows from \cites{HI}
and the upper bound from \cites{ShiTam}
(at least for $\tau$ small enough that
 $\partial B_\tau$ has positive Gaussian curvature
 and inward pointing mean curvature).
Since the upper and lower bounds evidently agree to fourth order,
we immediately obtain the estimate
\begin{equation}
\label{leading}
  \mb[B_\tau]
  =
  \frac{1}{12}R\tau^3 + O(\tau^5),
\end{equation}
confirming (via the energy constraint)
the expectation mentioned above
that as $\tau$ shrinks to $0$
the ratio of the ball's Bartnik mass to its volume
tends to the source energy density at its center.
On the other hand the fifth-order terms for the
Hawking and Brown-York mass differ in general
(even for spherically symmetric $B_\tau$).

\subsection*{Results}
Bartnik conjectured (\cites{BarNew, BarTHL, BarICM})
that his quasilocal mass should be realized by a unique
asymptotically flat static vacuum extension
inducing the given boundary metric and mean curvature.
The necessity of the boundary and static vacuum conditions
for a minimizer have been established
(see \cites{AJ},
 \cites{CorDef, CorBarNote, CorBarShort} and \cites{HMM},
 and \cites{MiaoVarEff},
 as well as \cites{HL, AnMin} 
 for the spacetime version),
but the existence of a minimizer is known
only
(at least to the author)
for
(i) apparent horizons,
by virtue of \cites{MS}
and its extension
\cites{ChauMartensDegenerate}
to the degenerate case,
and (ii)
for data which can be realized
as an outer-minimizing embedded sphere 
(required for our definition, \eqref{mbdef})
enclosing the horizon in a time-symmetric Schwarzschild slice
(or any outer-minimizing embedded sphere in Euclidean space),
by virtue of \cites{HI}
(or \cites{SY, Wit}).
Although the most aggressive formulations of the conjecture
are now known to be false
(see \cites{MiaoBdyEffStat, AJ, MS}
 and for counterexamples
 in the spacetime setting in higher dimensions
 also \cites{HL}),
it does suggest a strategy, pursued in this article,
for seeking a tighter upper bound than
the Brown-York mass affords in \eqref{sandwich}.
Our results are contained in the following theorem and corollary.

\begin{theorem}
  [Mass estimate;
   cf. \cites{MiaoStatExt, AndLoc, An, AnHuang}
   regarding existence and uniqueness]
\label{mainthm}
Let $M:=\{\vec{x} \in \R^3 \; : \; \abs{\vec{x}} \geq 1\}$,
let $\iota: \partial M \to M$ be the inclusion map
of $\partial M = \Sph^2$ in $M$,
and
let $\delta$ be the standard Euclidean metric on $M$,
so that $\iota^*\delta$ is the round metric of area $4\pi$
and $\partial M$ has mean curvature (by our conventions) $-2$
in $(M, \delta)$.
There exists $\epsilon_0>0$ such that
for any Riemannian metric
$\gamma = \iota^*\delta + \upp{\gamma}{1}$
and function $H = -2 + \upp{H}{1}$
on $\Sph^2$
with
$\epsilon
 :=\norm{\upp{\gamma}{1}}_{3,\alpha}+\norm{\upp{H}{1}}_{2,\alpha}
 < \epsilon_0
$
there is an asymptotically flat static vacuum metric $g$
on $M$,
unique up to diffeomorphism (for $\epsilon_0$ sufficiently small),
which
induces metric $\gamma$ and mean curvature $H$ on $\partial M$
and which
satisfies $\norm{g-\delta}_{3,\alpha,1} = O(\epsilon)$.
Moreover $g$ has mass
$\madm[g] = \upp{m}{1} + \upp{m}{2} + O(\epsilon^3)$,
where
\begin{equation}
\label{m1m2}
\begin{aligned}
  \upp{m}{1}
    &=
    \frac{1}{16\pi}\displaystyle{\int_{\partial M}} \left(
                        2\upp{H}{1}
                        -\upp{\gamma}{1}^\sigma_{\;\;\sigma}
                      \right)
      \qquad \mbox{and} \\
  \upp{m}{2}
    &=
    \frac{1}{16\pi}\int_{\partial M}
      \left[
        \upp{H}{1}\left(\upp{\gamma}{1}^\sigma_{\;\;\sigma}-f-v\right)
        +\frac{1}{2}(v-v_{,r})(v+2f)
        +\frac{1}{2}\abs{\trl{\upp{\gamma}{1}}}^2
      \right],
\end{aligned}
\end{equation}
with
  \begin{equation}
    \trl{\upp{\gamma}{1}}
    :=
    \upp{\gamma}{1}
      -\frac{1}{2}\upp{\gamma}{1}^{\sigma}_{\;\;\sigma}\iota^*\delta
  \end{equation}
the ($\iota^*\delta$) trace-free part of
$\upp{\gamma}{1}$,
$v: M \to \R$ the unique ($\delta$) harmonic function on $M$
vanishing at infinity and
satisfying
\begin{equation}
\label{vrrmainthm}
  \iota^*v_{,rr}
    =
    2\upp{H}{1}
      +\trl{\upp{\gamma}{1}}_{\alpha\beta}^{\;\;\;\;\;:\beta\alpha}
      -\frac{1}{2}(\Delta+2)\upp{\gamma}{1}^\sigma_{\;\;\sigma},
\end{equation}
and $f: \Sph^2 \to \R$ any solution
to
\begin{equation}
\label{fdef}
  (\Delta + 2)f = \upp{H}{1} - \iota^*(v-v_{,r}).
\end{equation}
The preceding equation has a solution for any data,
and the choice of solution does not affect the value of $\upp{m}{2}$.
In the above,
$v_{,r}$ and $v_{,rr}$
are the first and second derivatives of $v$ in the radial direction
(into $M$),
and the integrals, the raising of indices,
the norm operation,
the Laplacian $\Delta$,
and the covariant differentiation indicated by the colon
in front of indices
are all defined via
the round metric $\iota^*\delta$.
\end{theorem}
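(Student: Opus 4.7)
Taking the existence and uniqueness of the extension as given
(supplied by the references attached to the theorem statement),
the basic strategy is a perturbative expansion
$g=\delta+\upp{g}{1}+\upp{g}{2}+O(\epsilon^3)$ with
$\upp{g}{i}=O(\epsilon^i)$, combined with the observation that the
ADM integrand in \eqref{madmdef} is linear in $g-\delta$, so that
$\madm[g]=\madm[\upp{g}{1}]+\madm[\upp{g}{2}]+O(\epsilon^3)$.
One extracts $\upp{m}{1}:=\madm[\upp{g}{1}]$ by solving the linearized
static vacuum problem explicitly, while
$\upp{m}{2}:=\madm[\upp{g}{2}]$ is obtained indirectly
by exploiting the second-order piece of the scalar constraint
$R[g]=0$, which relates $\madm[\upp{g}{2}]$ to a bulk quantity quadratic
in $\upp{g}{1}$ plus a boundary term that the second-order boundary
conditions force to be quadratic in $\upp{g}{1}$; thus
$\upp{g}{2}$ is never required globally on $M$.

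For the first-order analysis I would write the static vacuum system as
$u\Ric[g]=\nabla^2_g u$ with $\Delta_g u=0$, linearize around $(\delta,1)$
with $v$ the first-order variation of $u$, and obtain
$\Ric^{(1)}[\upp{g}{1}]=\nabla^2 v$ and $\Delta v=0$. The trace of the first
equation is the linearized scalar constraint, which says exactly that the
ADM integrand is divergence-free on $M$; the divergence theorem then
converts the flux at infinity into the flux across $\partial M$, and the
latter is rewritten entirely in terms of $\upp{\gamma}{1}$ and
$\upp{H}{1}$ by using $\iota^*\upp{g}{1}=\upp{\gamma}{1}$ together with the
linearized mean curvature identity $\delta\Hcal[\upp{g}{1}]=\upp{H}{1}$ to
trade interior derivatives at $\partial M$ for boundary data, yielding the
stated formula for $\upp{m}{1}$. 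The boundary relation \eqref{vrrmainthm}
for $\iota^*v_{,rr}$ arises by combining the linearized mean curvature
formula with the scalar constraint at $\partial M$ and eliminating the
remaining derivatives of $\upp{g}{1}$ via $\upp{\gamma}{1}$ and
$\upp{H}{1}$; the auxiliary function $f$ records the residual scalar mode
of $\upp{g}{1}$ on $\Sph^2$ and satisfies \eqref{fdef} as a compatibility
condition, with the $\ell=1$ ambiguity (kernel of $\Delta+2$) reflecting
infinitesimal translations of $\partial M$ in $\R^3$.

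For the second-order term I would rewrite $R[g]=0$ to second order as
$R^{(1)}[\upp{g}{2}]=-R^{(2)}[\upp{g}{1},\upp{g}{1}]$ and apply the
divergence theorem on $M$ to obtain
\[
  16\pi\,\upp{m}{2}
  =\int_{\partial M}
   \bigl(\upp{g}{2}_{ik,k}-\upp{g}{2}_{kk,i}\bigr)\tfrac{x^i}{\abs{x}}
   -\int_M R^{(2)}[\upp{g}{1},\upp{g}{1}].
\]
The second-order expansions of the exact boundary conditions
$\iota^*g=\gamma$ and $\Hcal[\iota,g]=H$ force $\iota^*\upp{g}{2}=0$
and $\delta\Hcal[\upp{g}{2}]=-\tfrac12\delta^2\Hcal[\upp{g}{1},\upp{g}{1}]$,
so the boundary integral is already purely quadratic in $\upp{g}{1}$. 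The
bulk integral of $R^{(2)}[\upp{g}{1},\upp{g}{1}]$ is then reduced to a
boundary integral on $\partial M$ by harmonicity of $v$ (via identities of
the schematic form $\int_M|\nabla v|^2=-\int_{\partial M}v\,v_{,r}$) together
with the parameterization of $\upp{g}{1}$ in terms of $v$, $f$, and the
traceless part of $\upp{\gamma}{1}$.

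The main obstacle is not conceptual but computational: the tensor algebra
and the repeated integration by parts on $\Sph^2$ needed to collapse the
quadratic boundary and bulk integrals into the compact form \eqref{m1m2}
are lengthy, and one must carefully separate the scalar, vector, and
traceless-tensor modes on $\Sph^2$ and track the interaction of $v$ with
the tangential data. Independence of $\upp{m}{2}$ from the choice of
solution $f$ of \eqref{fdef}, equivalent to insensitivity to the $\ell=1$
ambiguity, is predicted by translation invariance of $\madm$ and should
serve as a crucial consistency check.
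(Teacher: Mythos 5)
Your proposal is correct and follows essentially the same route as the paper: perturbative expansion of the extension, conversion of the ADM mass flux into a boundary integral (via the identity \eqref{C} relating the ADM integrand on $\partial M$ to the first variation of the boundary data) plus a bulk integral quadratic in the first-order perturbation, and reduction of that bulk integral to the boundary using harmonicity of $v$ and the parametrization of $\upp{g}{1}$ coming from Lemma \ref{keysyslemma}. The paper's bookkeeping differs slightly---it works with explicit approximate solutions, splitting the second-order correction into an interior piece $\upp{g}{2S}$ (satisfying a convenient gauge) and a boundary piece $\upp{g}{2B}$ (of diffeomorphism-plus-conformal type), and computes each mass contribution separately---but the key lemma, the role of the vanishing scalar curvature, and the integrations by parts on $\Sph^2$ are the same.
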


Most of Theorem \ref{mainthm} is old news:
existence was established first in \cites{MiaoStatExt}
assuming reflectional symmetry through each of the coordinate planes
and later in generality,
along with local uniqueness and in arbitrary dimension,
in \cites{AndLoc}, applying results from \cites{AndBdy,AndKhu}.
In fact a proof of existence and uniqueness
for the more general stationary vacuum extension problem
(with small data) has recently been achieved in \cites{An}
and, beyond the ball, for small perturbations of a broad
class of Euclidean domains in the static case
in \cites{AnHuang}.
The first-order mass estimate $\upp{m}{1}$
appears in \cites{Wbmoss}
and actually holds
for any sufficiently small vacuum extension
by virtue of identity \eqref{C} in Appendix \ref{massapp}.
The novel contribution of the present article
is the computation of the quadratic term $\upp{m}{2}$
in the expansion of the extension's mass,
which proceeds by an elaboration of the approach in \cites{Wbmoss}.

Clearly, for $\epsilon_0$ sufficiently small,
$\partial M$ is outer-minimizing
in the extension featured in Theorem \ref{mainthm}.
Consequently
(and independently of the validity
 of Bartnik's static vacuum extension conjecture
 in this setting)
the ADM mass of this extension is an upper bound for
the Bartnik mass
$\mb[\gamma, H]$
(as defined in \eqref{mbdef})
of its boundary.
(For an estimate of the Bartnik mass of
 data which is exactly (rather than approximately, as here)
 constant-mean-curvature but not necessarily almost round
 see \cites{CPCMM}.)
In particular we get the following
upper bound for the Bartnik mass of small metric spheres.

\begin{cor}
  [Upper bound on the quintic term of
   the Bartnik mass of small spheres]
\label{cor}
Let $p$ be a point in a Riemannian $3$-fold
with nonnegative scalar curvature
and for each $\tau>0$ let $B_\tau$ be the closed metric ball of radius $\tau$.
Then
(recalling \eqref{mbomdef} and \eqref{mbdef})
for $\tau$ sufficiently small
we have the estimate
\begin{equation}
\label{bupperbd}
  \mb[B_\tau]
  \leq
  \frac{\tau^3}{12}R
        +\frac{\tau^5}{2160}
          \left(30 \abs{\Ric}^2 - 25R^2 + 18\Delta R\right)
        +O(\tau^6),
\end{equation}
where $R$ is the scalar curvature
$\abs{\Ric}^2$ the squared norm of the Ricci curvature,
$\Delta$ the Laplacian,
and all terms are evaluated at $p$.
\end{cor}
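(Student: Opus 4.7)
The plan is to apply Theorem \ref{mainthm} to the boundary data of $\partial B_\tau$ after rescaling, constructing for each small $\tau$ a specific admissible extension whose ADM mass realizes the claimed upper bound. Under the dilation $h\mapsto \tau^{-2}h$, the ball $B_\tau\subset(N,h)$ becomes a unit-radius metric ball in $(N,\tau^{-2}h)$, its induced boundary metric transforms as $\gamma\mapsto\tau^{-2}\gamma$, its mean curvature as $H\mapsto\tau H$, and the Bartnik mass as $\mb\mapsto\tau^{-1}\mb$. In geodesic normal coordinates at $p$ the rescaled boundary data differs from the round unit-sphere data $(\iota^*\delta,-2)$ by $O(\tau^2)$ in $C^{3,\alpha}\times C^{2,\alpha}$, so Theorem \ref{mainthm} applies with $\epsilon=O(\tau^2)$. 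It yields a static vacuum extension on $M$ which, for $\tau$ small, has $\partial M$ outer-minimizing, matches the Bartnik data by construction, and satisfies $\madm=\upp{m}{1}+\upp{m}{2}+O(\tau^6)$. Rescaling back and using the infimum in \eqref{mbdef} then gives $\mb[B_\tau]\leq \tau\bigl(\upp{m}{1}+\upp{m}{2}\bigr)+O(\tau^7)$.

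The next step is to compute the $\tau$-expansion of the rescaled boundary data. Starting from the standard normal-coordinate expansion
\begin{equation*}
  h_{ij}(x)
  =\delta_{ij}
   -\tfrac{1}{3}R_{ikjl}(p)x^k x^l
   -\tfrac{1}{6}R_{ikjl;m}(p)x^k x^l x^m
   +O(|x|^4)
\end{equation*}
and switching to dilated coordinates $\tilde x=x/\tau$, I read off the induced metric and mean curvature on the unit coordinate sphere, producing perturbations of the form
\begin{equation*}
  \upp{\gamma}{1}=\tau^2A_2+\tau^3A_3+O(\tau^4),
  \qquad
  \upp{H}{1}=\tau^2B_2+\tau^3B_3+O(\tau^4),
\end{equation*}
with the tensorial $A_k$ and scalar $B_k$ on $\Sph^2$ explicit in the curvature and its first covariant derivative at $p$. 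Since $A_2,B_2$ are quadratic polynomials in the angular variable, they live in the $\ell\in\{0,2\}$ spherical-harmonic modes, while $A_3,B_3$ are cubic and live in $\ell\in\{1,3\}$.

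Next I plug these expansions into the formulas \eqref{m1m2}. For $\upp{m}{1}$, only the $\ell=0$ projection of $2\upp{H}{1}-\upp{\gamma}{1}^\sigma_{\;\;\sigma}$ survives integration, so $A_3,B_3$ drop out by parity, and after the outer $\tau$ one recovers the leading Hawking term $\tfrac{\tau^3}{12}R(p)$. For $\upp{m}{2}$, the dominant contribution is bilinear in $A_2$ and $B_2$, hence $O(\tau^4)$, which after the outer $\tau$ accounts for the quintic correction. To evaluate it explicitly I decompose $A_2,B_2$ into spherical harmonics, solve \eqref{vrrmainthm} mode-by-mode for $v=\sum a_{\ell,m}r^{-\ell-1}\Y$, and similarly solve \eqref{fdef} for $f$ (the $\ell=1$ solvability being guaranteed by Theorem \ref{mainthm}). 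Orthogonality on $\Sph^2$ then collapses each integral in \eqref{m1m2} to a finite sum over $\ell\in\{0,2\}$, and after using the Gauss identity and the contracted second Bianchi identity to re-express double contractions of the Riemann tensor in terms of $\abs{\Ric}^2$, $R^2$, and $\Delta R$, I expect to recover the coefficient $\tfrac{1}{2160}\bigl(30\abs{\Ric}^2-25R^2+18\Delta R\bigr)$ appearing in \eqref{bupperbd}.

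The main obstacle will be the organizational bookkeeping of the spherical-harmonic coefficients, verifying that the many curvature-dependent tensorial contractions combine with the correct rational factors to produce exactly the invariants on the right-hand side of \eqref{bupperbd}. A secondary care is to confirm the stated remainder: bilinear cross terms in $\upp{m}{2}$ between the even-parity $\tau^2$ data and the odd-parity $\tau^3$ data vanish by angular parity, so the first missed contribution enters only at $O(\tau^6)$, consistent with \eqref{bupperbd}.
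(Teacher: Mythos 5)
Your overall strategy — rescale $B_\tau$ to a unit ball, apply Theorem~\ref{mainthm} with $\epsilon = O(\tau^2)$, Taylor-expand the rescaled boundary data in normal coordinates, and evaluate $\upp{m}{1}$ and $\upp{m}{2}$ via spherical harmonics — is exactly the paper's route.

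The gap is in the truncation order. You expand $\upp{\gamma}{1}$ and $\upp{H}{1}$ only through $\tau^3$ and assign the entire quintic correction in \eqref{bupperbd} to $\upp{m}{2}$ evaluated on the $\tau^2$ coefficients $A_2,B_2$. This cannot produce the $\Delta R$ term: $A_2$ and $B_2$ are built from the Riemann tensor at $p$ with no covariant derivatives, so any bilinear combination of them, integrated over $\Sph^2$, is a curvature-squared invariant lying in the span of $\abs{\Ric}^2$ and $R^2$; no amount of Gauss or Bianchi manipulation will introduce $\Delta R$ from such data. As the paper's expansions \eqref{Taylor} show, the $\tau^4$ coefficients of $\upp{\gamma}{1}$ and $\upp{H}{1}$ contain $R_{\alpha r\beta r|rr}$ and $R_{rr|rr}$, and the $\ell=0$ projections of those, fed through the \emph{linear} functional $\upp{m}{1}$, yield $\upp{m}{1}=\tfrac{1}{12}R\,\tau^2+\tfrac{1}{120}\Delta R\,\tau^4+O(\tau^5)$, whose $\tau^4$ term accounts precisely for the $\tfrac{18}{2160}\Delta R\,\tau^5$ piece after the outer factor of $\tau$. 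The quadratic functional $\upp{m}{2}$ at leading order contributes only the curvature-squared part, $\tfrac{1}{432}\bigl(6\abs{\Ric}^2-5R^2\bigr)\tau^4$. So you must carry the normal-coordinate Taylor expansion, and hence $\upp{m}{1}$, to order $\tau^4$; the rest of your plan is correct.
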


\begin{remark}[Conjectural equality]
We have equality in Corollary \ref{cor}
in the event that Bartnik's static vacuum extension conjecture
holds in this restricted, weak-field regime.
\end{remark}

\begin{remark}
[Comparison with Hawking and Brown-York masses]
Writing $\ms[B_\tau]$ for the right-hand side of \eqref{bupperbd}
(representing the ADM mass of the static vacuum extension
 from Theorem \ref{mainthm}
 for $\partial B_\tau$)
and recalling the expansions \eqref{hexp} and \eqref{byexp},
we have
\begin{equation}
\begin{aligned}
  \ms[B_\tau]-\mh[B_\tau]
    &=
    \frac{\tau^5}{216}\left(3\abs{\Ric}^2-R^2\right) + O(\tau^6)
    \quad \mbox{and} \\
  \mby[B_\tau]-\ms[B_\tau]
  &=
  \frac{\tau^5}{4320}\left(12\abs{\Ric}^2+11R^2\right) + O(\tau^6),
\end{aligned}
\end{equation}
so that $\mh[B_\tau] \leq \ms[B_\tau] + O(\tau^6)$,
with equality only when $\Ric$ is spherically symmetric at $p$,
while $\ms[B_\tau] \leq \mby[B_\tau] + O(\tau^6)$,
with equality only when $B_\tau$ is flat at $p$.
\end{remark}

An alternative computation
of $\ms[B_\tau]$,
that avoids reference
to the details of the construction
of the extension,
has been given in \cites{HarvieWang}
(after the appearance of
the present article in preprint form).

\subsection*{Outline}
The mass estimate in Theorem \ref{mainthm}
is proved by carrying out the construction
of a static vacuum extension in \cites{Wbmoss}
and keeping track of the mass to second order
(in the size of the perturbation from the Bartnik data of the standard sphere).
In Section 2 we recall the static vacuum system
and review the analysis of the linearized problem
in \cites{Wbmoss}, with some refinements.
In Section 3 we achieve both the static vacuum conditions
and the boundary conditions to first order.
In Section 4 we achieve the static vacuum conditions to second order
(without sacrificing 
 the satisfaction of the boundary conditions to first order).
In Section 5 we complete the construction to second order,
enforcing also the boundary conditions.
In Section 6 we compute the mass of the approximate solution
constructed in the previous steps,
so estimating the mass of an exact static vacuum extension
to second order and completing the proof of Theorem \ref{mainthm}.
In Section 7 we at last apply the mass estimate of Theorem \ref{mainthm}
to prove Corollary \ref{cor}.

There are five appendices.
The first three present
standard identities and computations for ease of reference,
while the fourth consists of a straightforward but somewhat lengthy
calculation we could not find elsewhere,
and the fifth presents an alternative proof
and generalization of a step
performed in \cites{Wbmoss}.
Appendix A declares our conventions
for the Riemann curvature tensor
and includes some useful identities,
particularly in dimension $3$.
Appendix B
contains the response of Ricci curvature to conformal change of metric.
Appendix C is used to relate the variation of the boundary data
under change of ambient metric to the corresponding variation of the mass.
Appendix D computes the second variation
of the induced boundary metric and mean curvature
with respect to simultaneous ambient diffeomorphism and conformal change.
Finally, Appendix E concerns
the linearization of the prescribed Ricci equation
on Euclidean domains without boundary conditions
and related material on symmetric tensors
of prescribed support
and (compatibly supported) divergence .

\subsection*{Acknowledgments}
This project has received funding
from the European Research Council (ERC)
under the European
Union’s Horizon 2020 research and innovation programme
(grant agreement No. 947923).
I thank  Alessandro Carlotto for
making this funding possible
and for his general encouragement and support.
I am also grateful to
Po-Ning Chen,
Nicos Kapouleas,
Christos Mantoulidis,
and Pengzi Miao
for illuminating and motivating discussions
that helped me to complete this calculation.
Finally, I thank the referees
for their valuable feedback,
which has improved this article.

\section{The static vacuum system and its linearization}
With assumptions and notation as in Theorem \ref{mainthm}
we seek an asymptotically flat Riemannian metric $g$ on $M$
inducing on $\partial M$ metric $\gamma$
and mean curvature
(by our convention the divergence of the origin-directed unit normal) $H$
together with a real-valued function $\Phi$ on $M$
making $(M,g)$ static vacuum with static potential $\Phi$.
In other words $(g,\Phi)$ must satsify
(see for example \cites{CB, CorDef}) the static vacuum system
  \begin{equation}
  \label{svs}
      D^2_{ab}[g]\Phi - \Phi R_{ab}[g] = 0
      \qquad \mbox{and} \qquad
      \Delta[g]\Phi=0
  \end{equation}
($D^2_{ab}[g]$, $R_{ab}[g]$, and $\Delta[g]$
 denoting respectively
 the Hessian operator, Ricci curvature, and Laplacian
 associated to $g$)
subject to the boundary conditions
  \begin{equation}
      \iota^*g_{\alpha \beta}=\gamma_{\alpha \beta}
      \qquad \mbox{and} \qquad
      \Hcal[\iota,g]=H,
  \end{equation}
where $\Hcal[\iota,g]$ is the $g$ divergence of the outward
(meaning pointing away from $M$) $g$ unit normal for $\partial M$,
and we also impose
the decay conditions
  \begin{equation}
      g_{ab}-\delta_{ab} \in C^{3,\alpha,1}(T^*M^{\odot 2})
      \qquad \mbox{and} \qquad
      \Phi-1 \in C^{3,\alpha,1}(M).
  \end{equation}
We follow the procedure of \cites{Wbmoss},
with minor modifications and more detailed estimates.

We define the operators
  \begin{equation}
    \begin{aligned}
      &\Scal: \Met^{k,\alpha,\beta}(M)
          \times \left(1+C^{k,\alpha,\beta}(M)\right)
        \to C^{k-2,\alpha,\beta+2}\left(T^*M^{\odot 2}\right)
          \times C^{k-2,\alpha,\beta+2}(M)
        \quad \mbox{and} \\
      &\Bcal: \Met^{k,\alpha,\beta}(M)
        \to C^{k,\alpha}\left(T^*\partial M^{\odot 2}\right)
          \times C^{k-1,\alpha}(\partial M)
        \quad \mbox{by} \\
      &\Scal[g,\Phi]:=\left(D^2_{ab}[g]\Phi-\Phi R_{ab}[g],
			    \;\; \Delta[g]\Phi\right)
      \qquad \mbox{and} \qquad
      \Bcal[g]:=\left(\iota^*g,\Hcal[\iota,g]\right),
    \end{aligned}
  \end{equation}
and for a given metric $k$ and function $\Psi$ on $M$
we denote the linearization of $\Scal$ at $(k, \Psi)$
and the linearization of $\Bcal$ at $k$
by $\Sdot[k,\Psi]$ and $\Bdot[k]$ respectively.
In particular we have
\begin{equation}
\label{Sdot}
  \Sdot[\delta, 1](h, u)
  =
  \left(
    u_{;ab} - \dot{R}_{ab}[\delta](h),
      \;\;
    \Delta[\delta]u
  \right),
\end{equation}
where $u_{;ab}$ and $\Delta[\delta]u$
are the Hessian and Laplacian of $u$
with respect to the Euclidean metric $\delta$
and the linearization $\dot{R}_{ab}[\delta]$ at $\delta$
of the Ricci curvature
can be expressed in the well known form
\begin{equation}
\label{Rdot}
  \dot{R}_{ab}[\delta](h)
  =
  \frac{1}{2}
  \left(
    {{h_a}^c}_{;cb} + {{h_b}^c}_{;ca} - {h^c}_{c;ab} - {h_{ab;c}}^{;c}
  \right),
\end{equation}
the semicolons indicating differentiation via $\delta$.

To solve the boundary-value problem
$\Scal[g,\Phi]=(0,0)$
with $\Bcal[g]=(\gamma, H)$
(and the desired decay)
we will first analyze the linearized problem
and construct an exact solution to the nonlinear problem
by iteration
(or more formally by invoking the contraction mapping lemma,
 as in \cites{Wbmoss}).
We will split the linearized problem into two parts:
first the problem with homogeneous data for $\Sdot[\delta,1]$
and prescribed inhomogeneous boundary data for $\Bdot[\delta]$,
namely
\begin{equation}
\label{Shom}
  \Sdot[\delta, 1](h_{ab}, u) = (0, 0)
  \qquad \mbox{and} \qquad
  \Bdot[\delta](h_{ab}) = \left(\widetilde{\gamma}_{\alpha\beta}, 
				\widetilde{H}\right),
\end{equation}
and second the problem with no boundary constraint
and
for $\Sdot[\delta,1]$
prescribed inhomogeneous data $(S_{ab}, \sigma)$
modulo the Lie derivative of the Euclidean metric $\delta$
along some vector field $\chi$, another unknown of the system,
namely
\begin{equation}
\label{Sinhom}
  \Sdot[\delta, 1](h, u)
  =
  \left(S_{ab} + \chi_{a;b} + \chi_{b;a}, \; \sigma\right),
\end{equation}
the semicolon indicating differentation via $\delta$.

Since we already control the boundary data in \eqref{Shom},
we may ignore it when solving \eqref{Sinhom}.
The Lie derivative term in \eqref{Sinhom}
is necessary because of an obstruction
arising from the linearization of the twice contracted Bianchi identity;
it is acceptable because in every such problem we face in practice
this identity will be approximately satisfied by $S$ itself,
so that the error introduced by $\chi$ will be
higher-order
and can be safely absorbed by the next iteration of the scheme.
The details clarifying and justifying these assertions
will be reviewed below when needed.

We solve \eqref{Shom}
by deforming the Euclidean metric $\delta$ by
linearized diffeomorphism and conformal change
and accordingly altering the trivial potential $1$.
In doing so we encounter
the linear system featured in the following lemma
and relevant to the statement of Theorem \ref{mainthm}.

\begin{lemma}
[Analysis of the boundary system]
\label{keysyslemma}
Suppose $\alpha \in (0,1)$,
$\widetilde{\gamma} \in C^{3,\alpha}(\partial M)$,
$\widetilde{H} \in C^{2,\alpha}(\partial M)$.

\begin{enumerate}[(i)]
\item There exist functions
$v \in C^{3,\alpha,1}(M)$,
$f \in C^{4,\alpha}(\partial M)$,
and a vector field
$X^\sigma \in C^{4,\alpha}(\partial M)$
satisfying the system
\begin{equation}
\label{keysys}
\begin{aligned}
  &(a) \qquad \Delta_\delta v=0 \\
  &(b) \qquad \iota^*v+2f=\frac{1}{2}\widetilde{\gamma}^\sigma_{\;\; \sigma}
      - {X^\sigma}_{:\sigma} \\
  &(c) \qquad \iota^*(v-v_{,r})+(\Delta_{\iota*\delta}+2)f=\widetilde{H} \\
  &(d) \qquad \widetilde{\gamma}_{\mu \nu}
    =
    \left(\frac{1}{2}\widetilde{\gamma}^\sigma_{\;\; \sigma}
	  - {X^\sigma}_{:\sigma}\right)\iota^*\delta_{\mu \nu}
      +X_{\mu:\nu}+X_{\nu:\mu},
\end{aligned}
\end{equation}
where 
$\Delta_\delta=\Delta[\delta]$ is the flat Laplacian
on $M \subset \R^3$,
$\Delta_{\iota^*\delta}=\Delta[\iota^*\delta]$
is the round Laplacian on $\Sph^2=\partial M$,
and
the raising and lowering of indices
as well as the differentation indicated by the colon
are performed via the round metric $\iota^*\delta$.

\item The system
\eqref{keysys} and data $\widetilde{\gamma}$
and $\widetilde{H}$ uniquely determine $v$:
it is the unique harmonic function on $M$
vanishing at infinity and
satisfying
\begin{equation}
\label{vrr}
  \iota^*v_{,rr}
  =
  2\widetilde{H}
    + \trl{\widetilde{\gamma}}_{\rho\sigma}^{\;\;\;\;:\sigma\rho}
    - \frac{1}{2}(\Delta_{\iota^*\delta} + 2)\widetilde{\gamma}^{\sigma}_{\;\;\sigma},
\end{equation}
where
$\trl{\widetilde{\gamma}}
 :=
  \widetilde{\gamma}-\frac{1}{2}\widetilde{\gamma}^{\sigma}_{\;\;\sigma}
    \iota^*\delta
$
and $\iota^*v_{,rr}$
is the restriction to $\partial M = \Sph^2$
of the second radial derivative (directed into $M$) of $v$.
In particular
\begin{equation}
\label{vest}
  \norm{v}_{3,\alpha,1}
  \leq
  C\left(
    \norm{\widetilde{\gamma}}_{3,\alpha} + \norm{\widetilde{H}}_{2,\alpha}
  \right)
\end{equation}
for some $C>0$ independent of $\widetilde{\gamma}$ and $\widetilde{H}$.

\item Any two solutions to equation (d) in \eqref{keysys}
      differ by a conformal Killing field on $\Sph^2=\partial M$;
      any solution $X$ to (d), together with $v$
      satisfying (a) and \eqref{vrr},
      uniquely determines a function $f$
      so that \eqref{keysys} is satisfied.
\end{enumerate}
\end{lemma}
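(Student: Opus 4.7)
My plan is to solve \eqref{keysys} in three stages that successively introduce the unknowns: first I treat (d) in isolation for $X$, then I use the prescribed-second-radial-derivative condition \eqref{vrr} to obtain $v$, and finally I read off $f$ algebraically from (b); equation (c) then falls out as a consistency check that closes the scheme.

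For (i) (and the first statement of (iii)) I note that tracing (d) against $\iota^*\delta$ gives the tautology $\widetilde{\gamma}^\sigma_{\;\sigma}=\widetilde{\gamma}^\sigma_{\;\sigma}$, so (d) is equivalent to its trace-free component
$$\trl{\widetilde{\gamma}}_{\mu\nu}=X_{\mu:\nu}+X_{\nu:\mu}-X^\sigma_{:\sigma}\,\iota^*\delta_{\mu\nu},$$
whose right-hand side is the conformal Killing operator applied to $X$. On the round $\Sph^2$ this operator is overdetermined elliptic with six-dimensional kernel (the conformal Killing fields of $(\Sph^2,\iota^*\delta)$); its $L^2$ adjoint is essentially the divergence acting on trace-free symmetric $2$-tensors, whose kernel (the transverse-traceless tensors on $\Sph^2$) is trivial. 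The cokernel is therefore trivial, and standard Schauder theory produces $X\in C^{4,\alpha}(\partial M)$ satisfying (d), unique up to a conformal Killing field, with $\norm{X}_{4,\alpha}\leq C\norm{\widetilde{\gamma}}_{3,\alpha}$.

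For (ii), any $\delta$-harmonic function on $M$ vanishing at infinity admits the spherical-harmonic expansion $v=\sum_{\ell,m}c_{\ell,m}r^{-(\ell+1)}\Y$, so $\iota^*v_{,rr}=\sum_{\ell,m}(\ell+1)(\ell+2)c_{\ell,m}\Y$ on $\partial M$. Since each weight $(\ell+1)(\ell+2)$ is strictly positive, prescribing $\iota^*v_{,rr}$ determines every $c_{\ell,m}$ uniquely and hence $v$, with the weighted Schauder bound \eqref{vest} following from the corresponding modewise estimates on the exterior domain. I therefore set $\iota^*v_{,rr}$ equal to the right-hand side of \eqref{vrr} and let $v$ be the resulting harmonic function.

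For the remaining content of (iii), I define $f:=\tfrac12\!\left(\tfrac12\widetilde{\gamma}^\sigma_{\;\sigma}-X^\sigma_{:\sigma}-\iota^*v\right)$ directly from (b) and verify that (c) is automatic. Applying $\Delta_{\iota^*\delta}+2$ to (b), substituting into (c), and using the harmonicity identity $\iota^*v_{,rr}=-2\iota^*v_{,r}-\Delta_{\iota^*\delta}\iota^*v$ reduces (c) to the condition $\iota^*v_{,rr}=2\widetilde{H}-\tfrac12(\Delta_{\iota^*\delta}+2)\widetilde{\gamma}^\sigma_{\;\sigma}+(\Delta_{\iota^*\delta}+2)X^\sigma_{:\sigma}$, which agrees with \eqref{vrr} exactly provided the tensor identity
$$(\Delta_{\iota^*\delta}+2)X^\sigma_{:\sigma}=\trl{\widetilde{\gamma}}_{\rho\sigma}^{\;\;\;\;:\sigma\rho}$$
holds. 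This last identity I obtain by taking the double divergence of the trace-free part of (d) and commuting covariant derivatives on $(\Sph^2,\iota^*\delta)$, where $\Ric[\iota^*\delta]=\iota^*\delta$ supplies the needed curvature contributions. The main obstacle is the structural input in step (i), namely the triviality of the cokernel of the conformal Killing operator on $\Sph^2$; once that is granted, everything else reduces to spherical-harmonic diagonalization and routine bookkeeping with curvature on the round sphere.
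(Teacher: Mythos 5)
Your proposal follows essentially the same route as the paper's proof: solve equation (d) via Fredholm theory for the conformal Killing operator (using triviality of TT tensors on $\Sph^2$), obtain $v$ from the spherical-harmonic diagonalization of \eqref{vrr} (with weights $(\ell+1)(\ell+2)>0$), read off $f$ from (b), and verify (c) by applying the double-divergence identity $(\Delta_{\iota^*\delta}+2)\div X=\div\div\trl{\widetilde{\gamma}}$. One small point worth adding: defining $f$ from (b) directly only yields $f\in C^{3,\alpha}$, so to reach the asserted $C^{4,\alpha}$ regularity you should close the loop by applying elliptic regularity to $(\Delta_{\iota^*\delta}+2)f=\widetilde H-\iota^*(v-v_{,r})$ (equation (c)) once it has been verified; also, on a $2$-manifold the conformal Killing operator is a square (determined) elliptic system with injective symbol rather than an overdetermined one, though this has no bearing on the argument.
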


\begin{proof}
Item (i) was established in \cites{Wbmoss},
but we provide a more streamlined proof here
and fill in the remaining items.

Equation (d) in \eqref{keysys}
is thoroughly understood
in the literature
(\cites{York});
in this paragraph
we briefly summarize its standard analysis.
We can rewrite (d) as
$LX = \trl{\widetilde{\gamma}}$,
with $L$ the conformal Killing operator in two dimensions.
Taking the divergence of both sides
yields
$(L^*L X)_\alpha = \trl{\widetilde{\gamma}}_{\alpha\beta}^{\;\;\;\;\;:\beta}$,
with $L^*$ the formal adjoint of $L$.
Then $L^*L$ is second-order elliptic with the same kernel as $L$,
namely the space of conformal Killing fields,
to which the divergence of the trace-free symmetric tensor
$\trl{\widetilde{\gamma}}$ is orthogonal.
It follows that $L^*LX = \div \trl{\widetilde{\gamma}}$
has a solution $X^\sigma \in C^{4,\alpha}(\partial M)$,
so that $\trl{\widetilde{\gamma}}-LX$
has vanishing trace and divergence,
but on $2$-sphere
the only such symmetric $2$-tensor is the trivial one
(as evident from \eqref{dim2TT}, since the sphere
has strictly positive curvature).
Thus $X$ is a solution of equation (d) in \eqref{keysys}.
It is clear from equation (b)
that $f$ is then uniquely determined by $v$
and the choice of $X$,
completing the proof of (iii).

Solving (b) for $f$,
substituting the result into (c),
and rearranging
yields
$\Delta_{\iota^*\delta}\iota^*v + 2v_{,r}
 =
 -2\widetilde{H}
  +(\Delta_{\iota^*\delta}+2)(\frac{1}{2}\tr \gamma-\div X)
$,
but by
taking the double divergence of $LX=\trl{\widetilde{\gamma}}$
we get $(\Delta_{\iota^*\delta}+2) \div X=\div \div \trl{\widetilde{\gamma}}$.
By applying the expression
for the Laplacian in spherical coordinates
to (a)
we obtain \eqref{vrr}.
Conversely, if (a) and (d) hold,
and we define $f$ by (b),
then (c) holds too.
This completes the proof of (ii) and (i),
except for the existence of a harmonic $v \in C^{3,\alpha,1}(M)$
satisfying \eqref{vrr} and \eqref{vest}
(since the asserted regularity of $f$ then follows from equation (c)
 of \eqref{keysys}).

The Laplacian on $(\Sph^2,\iota^*\delta)$
has eigenvalues $-\ell(\ell+1)$ for $\ell \in \Z \cap [0,\infty)$,
and an eigenfunction $w$ with eigenvalue $-\ell(\ell+1)$
has the unique extension $wr^{-\ell-1}$ to a harmonic function
on $M$ vanishing at infinity.
Consequently,
since $(\ell+1)(\ell+2)>0$ for all $\ell \geq 0$,
we are guaranteed that
that there is a unique harmonic function $v$ on $M$
satisfying \eqref{vrr} and vanishing at infinity
and that its restriction to $\Sph^2$
lies at least in the Sobolev space $H^3(\partial M)$,
so certainly in $C^0(\partial M)$.
Moreover, the preceding paragraph shows that this restriction satisfies
\begin{equation}
  (\Delta_{\iota^*\delta} + 2\mathcal{N})\iota^*v
  =
  -2\widetilde{H}
  -\trl{\widetilde{\gamma}}_{\rho\sigma}^{\;\;\;\;\;\:\sigma\rho}
  +\frac{1}{2}(\Delta_{\iota^*\delta}+2)\widetilde{\gamma}^\sigma_{\;\;\sigma},
\end{equation}
where $\mathcal{N}$ is the Dirichlet-to-Neumann map
for the harmonic extension problem on $(M,\delta)$.
Since for any $k \geq 0$
$\mathcal{N}$ is a bounded linear map
from $C^{k+2,\alpha}(\partial M)$
to $C^{k+1,\alpha}(\partial M)$,
it follows by interpolation and elliptic regularity
that
$\norm{w}_{k+2,\alpha}
 \lesssim
 \norm{(\Delta+2\mathcal{N})w}_{k,\alpha}
 +\norm{w}_0
$
whenever $w \in C^{k+2,\alpha}(\partial M)$.
A standard argument using mollification
and compactness of $C^{k+2,\alpha}$ in $C^{k+2}$
completes the proof.
\end{proof}

In a moment Lemma \ref{keysyslemma}
will be applied to the linearized static vacuum boundary-value problem,
but it also ensures that the functions $f$ and $v$,
as well as the mass contribution $\upp{m}{2}$ itself,
appearing in the statement of Theorem \ref{mainthm}
are well defined.

\begin{cor}
[Well-definedness of the expression for $\upp{m}{2}$]
Make the assumptions of Theorem \ref{mainthm}.
(i) Equation \eqref{vrrmainthm} has a unique harmonic solution
in $M$ vanishing at infinity.
(ii) Equation \eqref{fdef} has a solution,
and the value of $\upp{m}{2}$ in \eqref{m1m2}
does not depend on the choice of solution.
\end{cor}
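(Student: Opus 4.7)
The plan is to deduce both assertions directly from Lemma \ref{keysyslemma}, specialized to $\widetilde{\gamma}=\upp{\gamma}{1}$ and $\widetilde{H}=\upp{H}{1}$ (which lie in the required H\"{o}lder classes by the hypotheses of Theorem \ref{mainthm}), together with the self-adjointness of $\Delta+2$ on $(\Sph^2,\iota^*\delta)$, whose kernel is the three-dimensional span of the $\ell=1$ spherical harmonics.

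For (i), equation \eqref{vrrmainthm} is exactly \eqref{vrr} under this specialization, so Lemma \ref{keysyslemma}(ii) immediately furnishes the desired unique harmonic $v \in C^{3,\alpha,1}(M)$ vanishing at infinity. For the existence half of (ii), I will invoke Lemma \ref{keysyslemma}(i) on the same data, which produces a triple $(v,f,X)$ solving \eqref{keysys}; the uniqueness from (ii) of the lemma identifies this $v$ with the one furnished by (i) above, and then equation (c) of \eqref{keysys} is precisely \eqref{fdef}. (Equivalently, the Fredholm alternative reduces solvability of \eqref{fdef} to $L^2$-orthogonality of its right-hand side to $\ker(\Delta+2)$, which the lemma has just verified.)

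The main step is independence from the choice of $f$. Two solutions of \eqref{fdef} differ by some $k \in \ker(\Delta+2)$. Extracting the $f$-coefficient from $\upp{H}{1}(\upp{\gamma}{1}^\sigma_{\;\;\sigma}-f-v)$ and from $\tfrac{1}{2}(v-v_{,r})(v+2f)$ in \eqref{m1m2}, the $f$-dependent portion of $16\pi\,\upp{m}{2}$ collapses, via \eqref{fdef}, to
\begin{equation*}
  \int_{\partial M} f\bigl[\iota^*(v-v_{,r})-\upp{H}{1}\bigr]
  \;=\;
  -\int_{\partial M} f\,(\Delta+2)f.
\end{equation*}
Replacing $f$ by $f+k$ and using self-adjointness of $\Delta+2$, the change in this quantity is
\begin{equation*}
  -\int_{\partial M} k\,(\Delta+2)f
  \;=\;
  -\int_{\partial M} k\bigl[\upp{H}{1}-\iota^*(v-v_{,r})\bigr]
  \;=\; 0,
\end{equation*}
the final equality being the solvability condition for \eqref{fdef} already established. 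There is no substantive obstacle beyond the bookkeeping in this collection of $f$-terms; the entire corollary is a direct consequence of Lemma \ref{keysyslemma} together with standard Fredholm theory for the self-adjoint operator $\Delta+2$ on the round sphere.
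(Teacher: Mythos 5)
Your proof is correct and follows essentially the same route as the paper: invoke Lemma \ref{keysyslemma} with $\widetilde{\gamma}=\upp{\gamma}{1}$, $\widetilde{H}=\upp{H}{1}$ for existence and uniqueness of $v$ and solvability of \eqref{fdef}, then observe that two choices of $f$ differ by $k\in\ker(\Delta+2)$ and that the resulting change in $\upp{m}{2}$ vanishes via \eqref{fdef} together with the kernel condition. (Your write-up is in fact a bit cleaner: you make explicit the self-adjointness/orthogonality step that the paper leaves abbreviated in its final display.)
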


\begin{proof}
Take $\upp{\gamma}{1}$ and $\upp{H}{1}$
as in Theorem \ref{mainthm}
and apply Lemma \ref{keysyslemma}
with $\widetilde{\gamma}=\upp{\gamma}{1}$
and $\widetilde{H}=\upp{H}{1}$
and equations \eqref{vrrmainthm}
and \eqref{fdef}
supplemented
by equations (a), (b), and (d) in \eqref{keysys}.
Item (i) and the existence claim of item (ii) follow directly.
For the remaining claim in (ii)
suppose that
 $k$ lies in the kernel of $\Delta_{\iota^*\delta}+2$.
Then by equation (c) of \eqref{keysys}
\begin{equation}
  -\upp{H}{1}k + \frac{1}{2}(v-v_{,r})2k
  =
  -\upp{H}{1}k + \upp{H}{1}k - k(\Delta_{\iota*\delta}+2)f,
\end{equation}
whose integral over $\Sph^2=\partial M$ vanishes
in view of the assumption that
$k$ belongs to the kernel
of the symmetric operator $\Delta_{\iota^*\delta}+2$.
Referring to \eqref{m1m2},
this
confirms that the value of $\upp{m}{2}$
is independent of the choice of $f$ in \eqref{fdef}.
\end{proof}

Now we solve the linearized static vacuum boundary-value problem
\eqref{Shom}.

\begin{prop}[BVP with homogeneous interior data]
\label{hom}
Let $\alpha \in (0,1)$.
Given any boundary data
$\widetilde{\gamma} \in C^{3,\alpha}(\partial M)$
and $\widetilde{H} \in C^{2,\alpha}(\partial M)$
the linearized static vacuum boundary-value problem
\eqref{Shom}
has a solution $(h_{ab}, u)$
with
$\norm{h}_{3,\alpha,1} + \norm{u}_{3,\alpha,1}
 \leq
 C\left(\norm{\widetilde{\gamma}}_{3,\alpha}
	+\norm{\widetilde{H}}_{2,\alpha}
  \right) 
$
for some $C>0$ independent of the data
$\widetilde{\gamma}$ and $\widetilde{H}$.
The function $u$ is uniquely determined by the data,
and any two choices for $h_{ab}$
differ by the Lie derivative of the flat metric $\delta$
along a vector field
whose restriction to $\partial M$
(with values in $T\R^3$)
is the restriction to $\partial M$ of some Killing field
on $\R^3$.
\end{prop}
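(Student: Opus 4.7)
The plan is to construct a solution via the ansatz
\begin{equation*}
  h_{ab} = v\,\delta_{ab} + Y_{a;b} + Y_{b;a},
  \qquad
  u = -\tfrac{1}{2}v,
\end{equation*}
with $v$ a harmonic function on $M$ vanishing at infinity
and $Y$ an auxiliary vector field on $M$
whose role is to realize the boundary data.
Substituting into \eqref{Rdot} gives, in dimension three,
$\dot{R}_{ab}[\delta](v\delta)
  = -\tfrac{1}{2}\bigl(v_{;ab}+(\Delta v)\delta_{ab}\bigr)$,
while the contribution from the Lie-derivative piece
$Y_{a;b}+Y_{b;a}$ vanishes identically
by the diffeomorphism invariance of Ricci together with $R_{ab}[\delta]=0$.
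For harmonic $v$ the interior system \eqref{Sdot} is thus satisfied automatically.

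For the boundary matching, I would apply Lemma \ref{keysyslemma}
to the data $(\widetilde{\gamma},\widetilde{H})$
to produce $v \in C^{3,\alpha,1}(M)$,
$f \in C^{4,\alpha}(\partial M)$,
and a tangential vector field $X$ on $\partial M$
satisfying \eqref{keysys} and \eqref{vest}.
Extend $Y$ to a vector field on $M$ whose restriction to $\partial M$
has tangential part $X$ and radial part $f$;
a cutoff of the obvious extension suffices and yields
$\norm{Y}_{3,\alpha,1} \lesssim \norm{X}_{4,\alpha}+\norm{f}_{4,\alpha}$.
Using the second fundamental form of $\partial M \subset (M,\delta)$,
a direct computation gives
$\iota^*(Y_{a;b}+Y_{b;a})_{\mu\nu}
  = 2f\,\iota^*\delta_{\mu\nu} + X_{\mu:\nu} + X_{\nu:\mu}$,
whose trace reproduces equation~(b) of \eqref{keysys}
and whose traceless part reproduces equation~(d).
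For the linearized mean curvature the conformal variation formula gives
$\dot{\Hcal}(v\delta) = \iota^*(v-v_{,r})$,
while identifying $\delta + t(Y_{a;b}+Y_{b;a})$ with $\phi_t^*\delta$
for $\phi_t$ the flow of $Y$
and transferring the problem to a normal variation of $\partial M$ in $(M,\delta)$
yields $\dot{\Hcal}(Y_{a;b}+Y_{b;a}) = (\Delta_{\iota^*\delta}+2)f$,
so the mean curvature condition becomes equation~(c) of \eqref{keysys}.
The weighted H\"{o}lder estimate for $(h,u)$ follows from \eqref{vest}
together with standard extension bounds on $Y$.

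For uniqueness, let $(h,u)$ solve the homogeneous problem.
A linearized Gauss-type identity at $\partial M$
expresses $\dot{R}_{rr}(h)|_{\partial M}$
in terms of $\iota^*h$ (with its intrinsic curvature) and $\dot{\Hcal}(h)$,
so the vanishing of these two forces
$u_{;rr}|_{\partial M} = 0$.
Being harmonic on $M$ and decaying at infinity,
$u$ is then identically zero
by the spherical-harmonic argument in the proof of Lemma \ref{keysyslemma}(ii).
With $u=0$ the interior equation reads $\dot{R}_{ab}(h)=0$;
combined with decay,
a standard result on Euclidean Ricci-flat deformations
(the content of the forthcoming Appendix~E)
identifies $h$ with the Lie derivative of $\delta$
along some vector field $Z$ on $M$.
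The vanishing of $\iota^*h$ and $\dot{\Hcal}(h)$ then becomes
equations~(b), (c), (d) of \eqref{keysys} with zero data and $v=0$,
forcing the tangential part of $Z|_{\partial M}$
to be a conformal Killing field on $\Sph^2$
and its radial part to be determined by the divergence of that tangential part.
Under the classical identification of conformal Killing fields on $\Sph^2$
with restrictions of Killing fields on $\R^3$
(translations supplying the normal components,
rotations the divergence-free tangential ones),
this exactly characterizes $Z|_{\partial M}$ as such a restriction.

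The main technical obstacle is the boundary bookkeeping:
verifying that $\Bdot[\delta]$ applied to the conformal-plus-Lie-derivative
ansatz reduces precisely to \eqref{keysys},
and running the analogous identification in reverse for uniqueness.
Both rest on standard conformal-change and normal-variation formulas
together with the well-understood conformal Killing algebra of the round sphere.
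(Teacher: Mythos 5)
Your existence argument is the same as the paper's: the conformal-plus-diffeomorphism ansatz $(h,u)=(v\delta+L_Y\delta,-\tfrac{1}{2}v)$ with harmonic $v$, the boundary linearizations in \eqref{Bdotvec}--\eqref{Bdotconf}, and Lemma~\ref{keysyslemma} to match the data.

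For uniqueness, you take a slightly different route. The paper observes that the interior equations $u_{;ab}=\dot R_{ab}(h)$, $\Delta u=0$ imply that $h+2u\delta$ has vanishing linearized Ricci curvature, applies Proposition~\ref{trivialityRicFlat} to write it as $\xi_{a;b}+\xi_{b;a}$, and only then uses items (ii)--(iii) of Lemma~\ref{keysyslemma} (with $v=-2u$ harmonic) to conclude $u=0$ and to characterize $\xi|_{\partial M}$. You instead extract $u_{;rr}|_{\partial M}=0$ directly from the boundary data via the linearized Gauss (Hamiltonian-constraint) identity, conclude $u\equiv 0$ by the spherical-harmonic argument, then apply Proposition~\ref{trivialityRicFlat} to the resulting Ricci-flat deformation $h$, and finally reduce to \eqref{keysys} with zero data. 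Both routes rest on the same two ingredients (Appendix~E and Lemma~\ref{keysyslemma}); yours separates the two boundary facts earlier while the paper's is more compressed. One small imprecision to note: the linearized Gauss identity at the round sphere in flat space actually controls the linearized Einstein component $\dot G_{rr}=\dot R_{rr}-\tfrac{1}{2}\delta^{ab}\dot R_{ab}$ rather than $\dot R_{rr}$ alone (concretely, $2\dot G_{rr}=-\dot R_{\Sigma}(\dot\gamma)-2\dot H$). With the interior equations this is $u_{;rr}-\tfrac{1}{2}\Delta u$, and one must also invoke $\Delta u=0$ to conclude $u_{;rr}|_{\partial M}=0$; your prose suggests the Gauss identity alone yields $\dot R_{rr}$, which it does not. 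The conclusion is unaffected, but the statement should be corrected.

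Everything else — the boundary bookkeeping, the estimate from \eqref{vest}, and the identification of conformal Killing fields on $\Sph^2$ paired with $f=-\tfrac{1}{2}\operatorname{div}X$ with restrictions of Euclidean Killing fields — is sound and matches the paper's intent.
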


\begin{proof}
For existence we repeat the proof in \cites{Wbmoss}.
The static vacuum conditions \eqref{svs}
are obviously preserved by diffeomorphisms,
but
diffeomorphisms deforming the geometry of $\partial M$
will alter the boundary data.
At the linearized level
this means
(keeping in mind that the background potential $1$ is constant)
that for any vector field $\xi^a$ on $M$
the pair $({}_1h_{ab},{}_1u)=(\xi_{a;b}+\xi_{b;a},0)$
will satisfy $\Sdot[\delta,1]({}_1h,{}_1u)=(0,0)$.
Meanwhile $\Bdot$ will be sensitive only to $\xi|_{\partial M}$;
specifically,
if $\xi|_{\partial M}=\iota_*X+f\partial_r$
for some function $f$
and vector field $X^\alpha$ on $\Sph^2=\partial M$,
then
\begin{equation}
\label{Bdotvec}
  \Bdot[\delta]({}_1h)
  =
  (X_{\alpha:\beta}+X_{\beta:\alpha} + 2f\iota^*\delta,
    \;\;
   (\Delta+2)f).
\end{equation}

On the other hand,
it is also easy to use conformal changes
with harmonic conformal factor
to adjust the boundary data
while maintaining the static vacuum conditions.
Specifically, if $v$ is a harmonic function on $M$,
then examination of \eqref{Sdot} and \eqref{Rdot}
confirms that the pair
$({}_2h_{ab}, {}_2u)=(v\delta_{ab}, -v/2)$
satisfies $\Sdot[\delta,1]({}_2h,{}_2u)=(0,0)$,
and at the boundary we have
\begin{equation}
\label{Bdotconf}
  \Bdot[\delta]({}_2h)
  =
  (\iota^*(v\delta), \;\;  \iota^*(v-v_{,r})).
\end{equation}
Summing the contributions of
\eqref{Bdotvec} and \eqref{Bdotconf},
it is clear that Lemma \ref{keysyslemma}
delivers a solution to \eqref{Shom}
satisfying the asserted estimate.

For uniqueness suppose $(h_{ab}, u)$
is a solution to \eqref{Shom}
with $\widetilde{\gamma}_{\alpha\beta}=0$
and $\widetilde{H}=0$.
It follows from \eqref{Sdot} and \eqref{Rdot}
that $\dot{R}_{ab}[\delta](h-2u\delta)=0$.
By Proposition \ref{trivialityRicFlat}
$h_{ab}-2u\delta_{ab}=\xi_{a;b}+\xi_{b;a}$
for some $\xi^a$,
but by items (ii) and (iii) of Lemma \ref{keysyslemma}
$u=0$ and $\xi|_{\partial M}$ is the restriction
to $\partial M$ of some Killing field.
\end{proof}

To complete the analysis of the linearized problem
we now solve \eqref{Sinhom}.

\begin{prop}[Inhomogenous interior data]
\label{inhom}
Let $\alpha \in (0,1)$ and $\beta \in (1/2,1)$.
Given $S_{ab} \in C^{1,\alpha,3+\beta}(T^*M^{\odot 2})$
and $\sigma \in C^{1,\alpha,3+\beta}(M)$,
system \eqref{Sinhom} has a solution
$(h_{ab}, u, \chi^a)$
so that for some $C>0$ independent of the data we have
(i) $\norm{h}_{3,\alpha,\beta}
 +\norm{u}_{3,\alpha,1+\beta}
 \leq C(\norm{S}_{1,\alpha,3+\beta}
        +\norm{\sigma}_{1,\alpha,3+\beta})$,
(ii) $h_{ab}^{\;\;\;\; ;b}=\frac{1}{2}h^c_{\;\; c;a}$,
and
(iii) $\norm{\chi}_{2,\alpha, 2+\beta}
        \leq C\norm{\Delta_\delta \chi}_{0,\alpha,4+\beta}
      $.
\end{prop}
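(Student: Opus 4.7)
The plan is to decouple \eqref{Sinhom} into a sequence of scalar, vector, and tensor Poisson problems on $\R^3$, after extending the data from $M$. The key device is to impose on $h$ the de Donder gauge (ii): under this gauge a direct substitution into \eqref{Rdot} collapses $\dot{R}_{ab}[\delta](h)$ to $-\tfrac{1}{2}\Delta_\delta h_{ab}$, so the first equation of \eqref{Sinhom} becomes a componentwise Poisson equation for $h$, provided the right-hand side satisfies the compatibility prescribed by the linearized twice-contracted Bianchi identity. The Lie derivative term $\chi_{a;b}+\chi_{b;a}$ is then used precisely to enforce that compatibility. The weighted elliptic estimates and careful handling of multipole obstructions needed throughout are to be provided by the machinery of Appendix E.

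First solve $\Delta_\delta u=\sigma$ by a Newtonian potential argument applied to a suitable extension of $\sigma$, producing $u\in C^{3,\alpha,1+\beta}(M)$. Next, the tensor
\begin{equation}
F_{ab} := u_{;ab} - S_{ab} - (\chi_{a;b}+\chi_{b;a})
\end{equation}
will serve as the source of the eventual Poisson problem for $h$, and to that end must satisfy the linearized Bianchi compatibility $F_{ab}{}^{;b}=\tfrac{1}{2}F^c{}_{c;a}$. A direct calculation, using $\Delta_\delta u=\sigma$ and that partial derivatives commute on $\R^3$, reduces this compatibility to the vector Poisson equation
\begin{equation}
\Delta_\delta \chi_a
  = \tfrac{1}{2}\sigma_{;a} - S_{ab}{}^{;b} + \tfrac{1}{2}S^c{}_{c;a},
\end{equation}
whose right-hand side lies in $C^{0,\alpha,4+\beta}$. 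Solving this componentwise by the Newtonian potential furnishes $\chi \in C^{2,\alpha,2+\beta}$, and the asserted estimate (iii) is then just the standard weighted elliptic bound for the scalar Poisson equation on $\R^3$. With this $\chi$, the tensor $F_{ab}$ satisfies the Bianchi compatibility by construction, and one solves $-\tfrac{1}{2}\Delta_\delta h_{ab} = F_{ab}$ componentwise by the Newtonian potential to obtain $h \in C^{3,\alpha,\beta}$.

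To close, verify (ii) \emph{a posteriori}: taking the divergence of $-\tfrac{1}{2}\Delta_\delta h_{ab}=F_{ab}$ and subtracting half the gradient of its trace, the Bianchi compatibility of $F$ forces $h_{ab}{}^{;b}-\tfrac{1}{2}h^c{}_{c;a}$ to be harmonic on $\R^3$, so the decay of $h$ makes it vanish identically. Under (ii) one then has $\dot{R}_{ab}[\delta](h) = -\tfrac{1}{2}\Delta_\delta h_{ab} = F_{ab}$, which rearranges to the first equation of \eqref{Sinhom}; the second equation of \eqref{Sinhom} is built in. The estimate (i) follows by concatenating the three weighted Poisson bounds. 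The principal technical obstacle is expected to lie not in the algebra above, which is essentially mechanical, but in the decay analysis: a Newtonian potential of a source with decay rate $3+\beta$ generically has only $O(|x|^{-1})$ leading behaviour, so attaining the sharper weights $(1+|x|)^{-(1+\beta)}$, $(1+|x|)^{-(2+\beta)}$, and $(1+|x|)^{-\beta}$ prescribed for $u$, $\chi$, and $h$ demands killing the relevant lower multipole moments of each source --- precisely the role of the structure results on symmetric tensors of prescribed support and compatibly supported divergence collected in Appendix E.
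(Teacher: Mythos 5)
Your algebra is sound: the de Donder reduction $\dot{R}_{ab}[\delta]h=-\tfrac12\Delta_\delta h_{ab}$ under (ii) is correct, and your derivation that the Bianchi compatibility for $F_{ab}=u_{;ab}-S_{ab}-(\chi_{a;b}+\chi_{b;a})$ collapses to $\Delta_\delta\chi_a=\tfrac12\sigma_{;a}-S_{ab}{}^{;b}+\tfrac12 S^c{}_{c;a}$ matches what the paper's machinery produces (Proposition \ref{selfequilibration}'s choice of $X=-\chi$ solves precisely this equation). Your plan is therefore structurally the same as the paper's, which is a two-sentence invocation of Propositions \ref{selfequilibration} and \ref{prescribingricci} after solving $\Delta u=\sigma$; you are in effect unpacking those two propositions.

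The gap is that the unpacking is incomplete at exactly the point you wave at. The a posteriori verification of the gauge (ii) reads: $h_{ab}{}^{;b}-\tfrac12 h^c{}_{c;a}$ is harmonic, decays, hence vanishes. This is a Liouville argument, and it only works if the compatibility $\div\widehat F=0$ holds on all of $\R^3$, because the exterior domain $M=\{|\vec x|\ge1\}$ admits an ample supply of decaying harmonic functions ($r^{-1}$ and its derivatives), so ``harmonic and decaying on $M$'' does not imply zero. But ensuring $\div\widehat F=0$ on $\R^3$, rather than just on $M$, requires an extension of the data across the unit ball that is itself divergence-free in the appropriate sense, and arranging such an extension is not automatic: this is precisely what Lemmas \ref{convexdivergence} and \ref{roughextension} and Proposition \ref{prescribingricci} in Appendix E are built to do (producing a self-equilibrated, compactly supported extension). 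Your description of Appendix E as ``killing lower multipole moments'' mischaracterises its role; it is really about constructing compatibly supported, divergence-free extensions so that the global Newtonian-potential argument can be run at all. The secondary issue you already flag --- that a bare Newtonian potential for $\chi$ will not deliver decay at rate $2+\beta\in(2.5,3)$ because the dipole moment of the source need not vanish --- is genuine; on $M$ one can repair it by subtracting a decaying harmonic vector field (which does not alter $\div\widehat{K\chi}$), but the argument as written does not execute this, and by itself this repair still leaves the $\R^3$-versus-$M$ issue above unresolved. In short, the proposal is the right skeleton with the flesh deferred; the deferred part --- the extension-with-preserved-divergence-structure --- is not a technicality but the entire content of the appendix the paper is leaning on.
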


\begin{proof}
First pick $u$ satisfying
${u^{;c}}_{;c}=\sigma$
and the estimate required by item (i) of the statement.
To solve for $h$ (and $\chi$ in the process)
the proof of the corresponding Lemma 2.8 in \cites{Wbmoss}
made use of the representation \eqref{incompatibility}
and an associated result in elasticity theory
(see for example Section 17 of \cites{Gurtin}.)
Alternatively we can apply Proposition \ref{selfequilibration}
with $F_{ab}:=u_{;ab}-S_{ab}$
to select $\chi$, satisfying the estimate (iii),
and then conclude by applying
Proposition \ref{prescribingricci}
to obtain $h$ satisfying (i) and (ii).
\end{proof}

\section{Solving to first order}
With assumptions as in Theorem \ref{mainthm}
we start by taking $f, v, X$ as in Lemma \ref{keysyslemma}
with $\widetilde{\gamma}=\upp{\gamma}{1}$
and $\widetilde{H}=\upp{H}{1}$,
so that in particular
  \begin{equation}
  \label{vfest}
    \norm{v}_{3,\alpha,1}+\norm{f}_{4,\alpha} = O(\epsilon).
  \end{equation}
Next let $\xi^a$ be the unique
radially constant
($\delta$-parallel along rays extending from the origin)
vector field on $M$ satisfying
$\xi^a \circ \iota = {\iota^a}_{,\mu}X^\mu$.
(Here ${\iota^a}_{,\mu}$ is $d\iota$,
so that the right-hand side
is $X$ pushed forward by $\iota$,
while the left-hand side
is $\xi$ pulled back by $\iota$.)
Now set
\begin{equation}
  \Xi^a(\vec{x})
  :=
  \psi(\abs{\vec{x}})
      \left[f\left(\frac{\vec{x}}{\abs{\vec{x}}}\right)\vec{x} + \vec{\xi}(\vec{x})\right],
\end{equation}
where $\psi: [1,\infty) \to [0,1]$ is a smooth function
taking the value $1$ constantly on $[1,2]$
and taking the value $0$ constantly on $[3,\infty)$.
Thus, by Lemma \ref{keysyslemma}
(cf. the proof of Proposition \ref{hom}),
$(h_{ab}, u) := \Xi_{a;b}+\Xi_{b;a} + v\delta_{ab}, -v/2)$
is a solution to \ref{Shom}
with data $\left(\upp{\gamma}{1}, \upp{H}{1}\right)$.

We now define the
function $\rho: M \to \R$
and the map $\phi: M \to \R^3$ by
  \begin{equation}
  \label{rhophidef}
    \rho:=1+v
    \qquad \mbox{and} \qquad
    \phi(\vec{x})
    =
    \vec{x} + \Xi(\vec{x}).
  \end{equation}
For $\epsilon$ sufficiently small
$\rho$ is strictly positive
and $\phi$ is a diffeomorphism onto its image $\phi(M)$.
From these we define on $M$ the Riemannian metric and real-valued function
  \begin{equation}
  \label{g1Phi1def}
    \up{g}{1}_{ab}:=\rho \phi^*\delta_{ab}
    \qquad \mbox{and} \qquad
    \up{\Phi}{1}:=1-\frac{1}{2}v.
  \end{equation}
Clearly
  \begin{equation}
  \label{g1Phi1est}
    \norm{\up{g}{1}-\delta}_{3,\alpha,1} + \norm{\up{\Phi}{1}-1}_{3,\alpha,1}
    =
    O(\epsilon)
  \end{equation}
and by design
  \begin{equation}
  \label{quaderror}
    \norm{
      \Scal \left[ \up{g}{1},\up{\Phi}{1}\right]
    }_{1,\alpha,3+\beta}
      =O\left(\epsilon^2\right)
    \qquad \mbox{and} \qquad
    \norm{\Bcal \left[\up{g}{1}\right]}_{1,\alpha} 
    = (\gamma,H) + O\left(\epsilon^2\right),
  \end{equation}
  where we fix now and henceforth
  some $\beta \in (1/2,1)$.
  Here we use the facts
  (as we shall do repeatedly in the sequel)
  that,
  working in any coordinates,
  $\Bcal[g]$
  is pointwise
  a smooth function of the derivatives
  of $g$ from order zero to one
  and that,
  working in Cartesian coordinates,
  $\Scal[g,\Phi]$
  is pointwise
  a smooth function
  of the derivatives of $\Phi$
  from order zero to two
  and of the derivatives of $g$
  from order zero to two.
  Consequently we obtain
  \eqref{quaderror}
  by pointwise Taylor expansion
  to first order with a quadratic estimate
  for the remainder;
  for the decay estimate in particular
  we exploit the specific structure
  of $\Scal$.

\section{Achieving the static vacuum condition to second order}
We next seek higher-order corrections
$\upp{g}{2S} \in C^{3,\alpha,\beta}(T^*M^{\odot 2})$
and $\upp{\Phi}{2S} \in C^{3,\alpha,\beta}(M)$
making the pair
  \begin{equation}
    \left(\up{g}{2S},\up{\Phi}{2S}\right):=\left(\up{g}{1}+\upp{g}{2S},\up{\Phi}{1}+\upp{\Phi}{2S}\right)
  \end{equation}
static vacuum to second order in $\epsilon$ while preserving the boundary conditions to first order:
we require
  \begin{equation}
  \label{2S}
    \norm{\Scal\left[\up{g}{2S},\up{\Phi}{2S}\right]}_{1,\alpha,3+\beta}
    =O\left(\epsilon^3\right)
  \end{equation}
and
  \begin{equation}
  \label{etauest}
    \norm{\upp{g}{2S}}_{3,\alpha,\beta}+\norm{\upp{\Phi}{2S}}_{3,\alpha,\beta} = O\left(\epsilon^2\right).
  \end{equation}

Assuming we can arrange \eqref{etauest}, we will have
  \begin{equation}
    \Scal\left[\up{g}{2S},\up{\Phi}{2S}\right]
    =
    \Scal\left[\up{g}{1},\up{\Phi}{1}\right]
      +\dot{\Scal}[\delta,1]\left(\upp{g}{2S},\upp{\Phi}{2S}\right)
      +O\left(\epsilon^3\right)
  \end{equation}
(having also used
\eqref{g1Phi1est}
to justify writing
$\Sdot[\delta,1]$
rather than
$\Sdot[\up{g}{1}, \up{\Phi}{1}]$),
so to achieve \eqref{2S} we intend to solve
  \begin{equation}
    \dot{\Scal}[\delta,1]\left(\upp{g}{2S},\upp{\Phi}{2S}\right)
    =
    -\Scal\left[\up{g}{1},\up{\Phi}{1}\right] + O\left(\epsilon^3\right)
  \end{equation}
or equivalently
  \begin{equation}
  \label{Sto2nd}
    \begin{aligned}
      &D^2_{ab}[\delta]\upp{\Phi}{2S}
      -\dot{R}_{ab}[\delta]\upp{g}{2S} = \up{\Phi}{1}R_{ab}[g_1] - D^2_{ab}[g_1]\up{\Phi}{1} + O(\epsilon^3) \\
      &\Delta[\delta]\upp{\Phi}{2S}=-\Delta\left[\up{g}{1}\right]\up{\Phi}{1} + O(\epsilon^3)
    \end{aligned}
  \end{equation}
for $\upp{g}{2S}$ and $\upp{\Phi}{2S}$.

We obtain $\upp{g}{2S}$ and $\upp{\Phi}{2S}$
by applying Proposition \ref{inhom} with
  \begin{equation}
  \label{Ssigma}
    \begin{aligned}
      &S_{ab}:=\up{\Phi}{1} R_{ab}\left[\up{g}{1}\right] - D^2_{ab}\left[\up{g}{1}\right]\up{\Phi}{1},
      \qquad
      \sigma:=-\Delta\left[\up{g}{1}\right]\up{\Phi}{1}, \\
      &\upp{g}{2S}_{ab}:=h_{ab}, \qquad \mbox{and} \qquad
      \upp{\Phi}{2S}:=u.
    \end{aligned}
  \end{equation}
Note that since $\dot{R}_{ab}[\delta]\upp{g}{2S}$ itself satisfies
the linearization at $\delta$ of the twice contracted differential Bianchi identity,
we have
  \begin{equation}
    \chi_{a;b}^{\;\;\;\;\; ;b}
    =
    \frac{1}{2}\sigma_{;a}-S_{ab}^{\;\;\;\, ;b} + \frac{1}{2}S^b_{\;\; b;a},
  \end{equation}
but by \eqref{g1Phi1est} and \eqref{quaderror}
  \begin{equation}
    \norm{S_{ab}^{\;\;\;\, ;b}-{\up{g}{1}}^{bc}D_c\left[\up{g}{1}\right]S_{ab}}_{0,\alpha,4+\beta}
      +\norm{S^b_{\;\; b;a}-\left({\up{g}{1}}^{bc}S_{bc}\right)_{,a}}_{0,\alpha,4+\beta}
    = O\left(\epsilon^3\right),
  \end{equation}
so, applying the twice contracted differential Bianchi identity for $R_{ab}[g_1]$,
  \begin{equation}
    \begin{aligned}
      \chi_{a;b}^{\;\;\;\;\; ;b}
      &=
      -\frac{1}{2}\left(\Delta\left[\up{g}{1}\right]\up{\Phi}{1}\right)_{,a}
        -{\up{g}{1}}^{bc}{\up{\Phi}{1}}_{,c}R_{ab}\left[\up{g}{1}\right]
        -\frac{1}{2}\up{\Phi}{1}R_{,a}\left[\up{g}{1}\right]
        +\left(\Delta\left[\up{g}{1}\right]\up{\Phi}{1}\right)_{,a}
        +{\up{g}{1}}^{bc}{\up{\Phi}{1}}_{,c}R_{ab}\left[\up{g}{1}\right] \\
       &\;\;\;\;
        +\frac{1}{2}{\up{\Phi}{1}}_{,a}R\left[\up{g}{1}\right]
        +\frac{1}{2}\up{\Phi}{1} R_{,a}\left[\up{g}{1}\right]
        -\frac{1}{2}\left(\Delta\left[\up{g}{1}\right]\up{\Phi}{1}\right)_{,a}
        +O(\epsilon^3) \\
      &=
      \frac{1}{2}{\up{\Phi}{1}}_{,a}R\left[\up{g}{1}\right]+O(\epsilon^3).
    \end{aligned}
  \end{equation}

On the other hand, \eqref{g1Phi1est} and \eqref{quaderror}
imply that $\norm{R\left[\up{g}{1}\right]}_{1,\alpha,4} = O\left(\epsilon^2\right)$,
so in fact
  \begin{equation}
    \norm{\chi_{a;b}^{\;\;\;\;\; ;b}}_{0,\alpha,4+\beta} = O\left(\epsilon^3\right)
  \end{equation}
and therefore by item (iii) of Proposition \ref{inhom}
  \begin{equation}
    \norm{\chi}_{2,\alpha,2+\beta} = O\left(\epsilon^3\right),
  \end{equation}
so in turn
  \begin{equation}
    \norm{\chi_{a;b}+\chi_{b;a}}_{1,\alpha,3+\beta} = O\left(\epsilon^3\right).
  \end{equation}
We conclude that
  \begin{equation}
    \upp{\Phi}{2S}_{;ab}-\dot{R}_{ab}[\delta]\upp{g}{2S}
    =
    \up{\Phi}{1}R_{ab}\left[\up{g}{1}\right]-D^2_{ab}\left[\up{g}{1}\right]\up{\Phi}{1}+O\left(\epsilon^3\right),
  \end{equation}
which completes the verification of the system \eqref{Sto2nd},
the validity of whose second equation is obvious from the construction of
$\upp{\Phi}{2S}$ via Proposition \ref{inhom}.
We have now established \eqref{2S}.

\subsection*{An estimate for future use}
Before proceeding to correct the boundary metric and mean curvature to second order,
we pause to derive an estimate for $\int_M \upp{g}{2S}^{c \;\;\;\;\; ;d}_{\;\; c;d}$
that will be useful later.
It follows from item (ii) of Proposition \ref{inhom} that
$\dot{R}_{ab}[\delta]\upp{g}{2S}=-\frac{1}{2}\upp{g}{2S}_{ab;c}^{\;\;\;\;\;\; ;c}$,
whereby, in view of \eqref{Sto2nd},
  \begin{equation}
    \upp{g}{2S}_{ab;c}^{\;\;\;\;\;\; ;c}
    =
    2\up{\Phi}{1}R_{ab}\left[\up{g}{1}\right]-2D^2_{ab}\left[\up{g}{1}\right]\up{\Phi}{1}
      -2\upp{\Phi}{2S}_{;ab}+O\left(\epsilon^3\right).
  \end{equation}
Appealing again to \eqref{g1Phi1est} and \eqref{quaderror}
and contracting the above equation against $\delta^{ab}=\up{g}{1}^{ab}+O(\epsilon)$,
we find
  \begin{equation}
  \label{Deltatreta}
      \upp{g}{2S}^{c \;\;\;\;\; ;d}_{\;\; c;d}
      =
      2\up{\Phi}{1} R\left[\up{g}{1}\right]
        -2\Delta\left[\up{g}{1}\right]\up{\Phi}{1}-2\Delta[\delta]\upp{\Phi}{2S} + O(\epsilon^3)
      =
      2\up{\Phi}{1} R\left[\up{g}{1}\right] + O(\epsilon^3),
  \end{equation}
where to get the second estimate we have used \eqref{Sto2nd}.

Now
  \begin{equation}
    \begin{aligned}
      R_{ab}\left[\up{g}{1}\right]
      &=
      R_{ab}[\rho \phi^*\delta] \\
      &=
      \phi^*R_{ab}[\delta]
        +\frac{3}{4}\rho^{-2}\rho_{,a} \rho_{,b}
        -\frac{1}{2}\rho^{-1}D^2_{ab}[\phi^*\delta]\rho \\
       &\;\;\;\;
        -\frac{1}{2}\rho^{-1}\left(\Delta_{\phi^*\delta} \rho\right)(\phi^*\delta)_{ab}
        +\frac{1}{4}\rho^{-2}\abs{d\rho}_{\phi^*\delta}^2(\phi^*\delta)_{ab},
    \end{aligned}
  \end{equation}
so
  \begin{equation}
    \begin{aligned}
      R\left[\up{g}{1}\right]
      &=
      \frac{3}{4}\rho^{-3}\abs{d\rho}_{\phi^*\delta}^2
        -\frac{1}{2}\rho^{-2}\Delta_{\phi^*\delta}\rho
        -\frac{3}{2}\rho^{-2}\Delta_{\phi^*\delta}\rho
        +\frac{3}{4}\rho^{-3}\abs{d\rho}_{\phi^*\delta}^2 \\
      &=
      \frac{3}{2}\rho^{-3}\abs{d\rho}_{\phi^*\delta}^2
        -2\rho^{-2}\Delta_{\phi^*\delta}\rho \\
      &=
      \frac{3}{2}(1+v)^{-3}v_{,c}v_{,d}(\phi^*\delta)^{cd}
        -2(1-v)^2\Delta_{\phi^*\delta}v
        +O(\epsilon^3) \\
      &=
      \frac{3}{2}\abs{dv}_\delta^2
        -2\Delta_{\phi^*\delta}v
        +4v\Delta_{\phi^*\delta}v
        +O(\epsilon^3) \\
      &=
      \frac{3}{2}\abs{dv}_\delta^2
        -2\Delta_{\phi^*\delta}v
        +4v\Delta_{\delta}v
        +O(\epsilon^3) \\
      &=
      \frac{3}{2}\abs{dv}_\delta^2
        -2\Delta_{\phi^*\delta}v
        +O(\epsilon^3),
    \end{aligned}
  \end{equation}
having made use of \eqref{keysys}, \eqref{vfest}, and \eqref{rhophidef}.

Continuing \eqref{Deltatreta} with this last estimate, we get
  \begin{equation}
  \label{Deltatreta2}
    \begin{aligned}
      \upp{g}{2S}^{c \;\;\;\;\; ;d}_{\;\; c;d}
      &=
      2\left(1-\frac{1}{2}v\right)
        \left(\frac{3}{2}\abs{dv}_\delta^2-2\Delta_{\phi^*\delta}v\right)
        +O(\epsilon^3) \\
      &=
      3\abs{dv}_\delta^2 - 4\Delta_{\phi^*\delta}v + 2v\Delta_{\phi^*\delta}v + O(\epsilon^3) \\
      &=
      3\abs{dv}_\delta^2 - 4\Delta_{\phi^*\delta}v + 2v\Delta_{\delta}v + O(\epsilon^3) \\
      &=
      3\abs{dv}_\delta^2 - 4\Delta_{\phi^*\delta}v + O(\epsilon^3).
    \end{aligned}
  \end{equation}

Now
  \begin{equation}
  \label{firstint}
    \begin{aligned}
      \int_M \abs{dv}_\delta^2
      &=
      \int_M v^{;c}v_{;c}
      =
      \int_M \left(vv^{;c}\right)_{;c}
        -\int_M vv^{;c}_{\;\;\; ;c} \\
      &=
      -\int_{\partial M} vv_{,r}
         +\lim_{R \to \infty} \int_{\{r=R\}} vv_{,r}
         -0 \\
      &=
      -\int_{\partial M} vv_{,r},
    \end{aligned}
  \end{equation}
since $\norm{vv_r}_{0,0,3} = O(1)$
as a consequence of \eqref{vfest}.

Additionally,
writing $N_{\phi^*\delta}$ for the $\phi^*\delta$ $\infty$-directed unit normal to $\iota$,
  \begin{equation}
  \label{secondint_start}
    \begin{aligned}
      \int_M \Delta_{\phi^*\delta}v
      &=
      \int_{\{1 \leq r \leq 3\}} \Delta_{\phi^*\delta}v
      =
      \int_{\{1 \leq r \leq 3\}} \Delta_{\phi^*\delta}v \, \sqrt{\abs{\phi^*\delta}}
        +\int_{\{1 \leq r \leq 3\}} \Delta_{\phi^*\delta}v \, \left(\sqrt{\abs{\delta}}-\sqrt{\abs{\phi^*\delta}}\right) \\
      &=
      \int_{\{r=3\}} v_{,r} - \int_{\partial M} N_{\phi^*\delta}v \, \sqrt{\abs{\iota^*\phi^*\delta}}
        + \int_{\{1 \leq r \leq 3\}} \Delta_\delta v \, \left(\sqrt{\abs{\delta}}-\sqrt{\abs{\phi^*\delta}}\right)
        + O\left(\epsilon^3\right) \\
      &=
      \int_{\partial M} v_{,r} -  \int_{\partial M} N_{\phi^*\delta}v \, \sqrt{\abs{\iota^*\phi^*\delta}}
        +O\left(\epsilon^3\right) \\
      &=
      \int_{\partial M} \left(\partial_r - N_{\phi^*\delta}\right)v \, \sqrt{\abs{\iota^*\phi^*\delta}}
        +\int_{\partial M} v_{,r} \, \left(\sqrt{\abs{\iota^*\delta}}-\sqrt{\abs{\iota^*\phi^*\delta}}\right)
        +O\left(\epsilon^3\right) \\
      &=
      -\int_{\partial M} \left(N_{\phi^*\delta}-\partial_r \right)v
        +\int_{\partial M} v_{,r} \, \left(\sqrt{\abs{\iota^*\delta}}-\sqrt{\abs{\iota^*\phi^*\delta}}\right)
        +O\left(\epsilon^3\right),
    \end{aligned}
  \end{equation}
  where in particular we have used the facts that
  $N_{\phi^*\delta}-\partial_r = O(\epsilon)$,
  $v=O(\epsilon)$,
  and
  $
   \sqrt{\abs{\iota^*\phi^*\delta}}
   =
   \sqrt{\abs{\iota^*\delta}}
     +O(\epsilon)
  $
  to conclude that 
  the first integral on the penultimate line
  and the first integral on the ultimate line
  have $O(\epsilon^3)$ difference.
  More precisely, we compute 
  (for example by appealing to
  \eqref{normal}
  and the first line of
  \eqref{2ndvarsummary})
  \begin{equation}
  \begin{gathered}
  N_\phi^*\delta
    =
    \partial_r
    +d\iota(\nabla_{\iota^*\delta}f - X) 
    +O(\epsilon^2),
  \\
  \sqrt{\abs{\iota^*\phi^*\delta}}
    =
    \sqrt{\abs{\iota^*\delta}}
      +2f + \div_{\iota^*\delta}X
      +O(\epsilon^2).
  \end{gathered}
  \end{equation}
  Picking up where we left off
  in \eqref{secondint_start},
  we proceed to estimate
  \begin{equation}
  \label{secondint}
  \begin{aligned}
  \int_M \Delta_{\phi^*\delta}v
      &=
      \int_{\partial M} v_{:\sigma}\left(f^{:\sigma}-X^\sigma\right)
        -\int_{\partial M} v_{,r}\left(2f+{X^\sigma}_{:\sigma}\right)
        +O\left(\epsilon^3\right) \\
      &=
        -\int_{\partial M} v\left({f^{:\sigma}}_{:\sigma}-{X^{\sigma}}_{:\sigma}\right)
        -\int_{\partial M} v_{,r}\left(2f+{X^\sigma}_{:\sigma}\right)
          +O\left(\epsilon^3\right) \\
      &=
        -\int_{\partial M} v\left( {f^{:\sigma}}_{:\sigma} + v + 2f - \frac{1}{2}{\upp{\gamma}{1}^{\sigma}}_\sigma \right)
        -\int_{\partial M} v_{,r}\left(\frac{1}{2}{\upp{\gamma}{1}^{\sigma}}_\sigma - v \right)
        +O\left(\epsilon^3\right) \\
      &=
        -\int_{\partial M} v\left(\upp{H}{1}-\frac{1}{2}{\upp{\gamma}{1}^{\sigma}}_\sigma+v_{,r}\right)
        -\int_{\partial_M} v_{,r}\left(\frac{1}{2}{\upp{\gamma}{1}^{\sigma}}_\sigma - v \right)
        +O\left(\epsilon^3\right) \\
      &=
        -\int_{\partial M} \left[v\upp{H}{1}-\frac{1}{2}\left(v-v_{,r}\right){\upp{\gamma}{1}^{\sigma}}_\sigma \right]
          +O\left(\epsilon^3\right).
    \end{aligned}
  \end{equation}

From \eqref{Deltatreta2}, \eqref{firstint}, and \eqref{secondint}
we arrive at
  \begin{equation}
  \label{Deltatretaintest}
    \int_M \upp{g}{2S}^{c \;\;\;\;\; ;d}_{\;\; c;d}
    =
    \int_{\partial M} \left[4v\upp{H}{1}-2\left(v-v_{,r}\right){\upp{\gamma}{1}^{\sigma}}_\sigma-3vv_{,r}\right]
      +O\left(\epsilon^3\right).
  \end{equation}

\section{Solving to second order}
It remains to enforce the boundary data to second order
while maintaining the static vacuum condition also to second order:
we seek corrections
$\upp{g}{2B} \in C^{3,\alpha,\beta}\left(T^*M^{\odot 2}\right)$
and $\upp{\Phi}{2B} \in C^{3,\alpha,\beta}(M)$
such that the pair
  \begin{equation}
    \left(\up{g}{2},\up{\Phi}{2}\right)
    :=
    \left(\up{g}{2S}+\upp{g}{2B},\up{\Phi}{2S}+\upp{\Phi}{2B}\right)
  \end{equation}
satisfies
  \begin{equation}
    \Scal\left[\up{g}{2},\up{\Phi}{2}\right]=O\left(\epsilon^3\right)
    \qquad \mbox{and} \qquad
    \Bcal\left[\up{g}{2}\right]=(\gamma,H)+O\left(\epsilon^3\right).
  \end{equation}

Setting
  \begin{equation}
  \label{secondorderdefs}
    \begin{aligned}
      &\phi_t(\vec{x})
      :=
      \vec{x}
        +t\psi\left(\abs{\vec{x}}\right)
          \left[
            f\left(\frac{\vec{x}}{\abs{\vec{x}}}\right)+\vec{\xi}\left(\vec{x}\right)
          \right] \mbox{ (recalling \eqref{rhophidef}),} \\
      &\left(\upp{\gamma}{2B},\upp{H}{2B}\right)
      :=
      \frac{1}{2}\left.\frac{d^2}{dt^2}\right|_{t=0} \Bcal \left[(1+tv)\phi_t^*\delta \right],
      \quad \mbox{and} \quad
      \left(\upp{\gamma}{2S},\upp{H}{2S}\right)
      :=
      \dot{\Bcal}[\delta] \upp{g}{2S},
    \end{aligned}
  \end{equation}
by construction we have
  \begin{equation}
  \label{Bg2S_exp}
    \Bcal \left[\up{g}{2S}\right]
    =
    \Bcal \left[ \up{g}{1} \right]
      + \dot{\Bcal} \left[\up{g}{1}\right] \upp{g}{2S}
      + O\left(\epsilon^4\right)
    =
    \left(\gamma,H\right)
      + \left(\upp{\gamma}{2B},\upp{H}{2B}\right)
      + \left(\upp{\gamma}{2S},\upp{H}{2S}\right)
      +O\left(\epsilon^3\right)
  \end{equation}
(having used \eqref{etauest} in particular
for the first estimate)
and of course $\Scal \left[\up{g}{2S},\up{\Phi}{2S}\right]=O\left(\epsilon^3\right)$.
Provided
  \begin{equation}
  \label{2B_second-order}
    \norm{\upp{g}{2B}}_{3,\alpha,1+\beta} + \norm{\upp{\Phi}{2B}}_{3,\alpha,1+\beta} = O\left(\epsilon^2\right),
  \end{equation}
it follows that
  \begin{equation}
    \begin{aligned}
      \Scal \left[\up{g}{2},\up{\Phi}{2}\right]
      &=
      O\left(\epsilon^3\right)+\dot{\Scal}\left[\up{g}{2S},\up{\Phi}{2S}\right]\left(\upp{g}{2B},\upp{\Phi}{2B}\right)
      =
      \dot{\Scal}[\delta,1]\left(\upp{g}{2B},\upp{\Phi}{2B}\right) + O\left(\epsilon^3\right) \mbox{ and} \\
      \Bcal \left[\up{g}{2}\right]
      &=
      \Bcal\left[\up{g}{2S}\right]
        +\Bdot\left[\up{g}{2S}\right]
            \upp{g}{2B}
        +O(\epsilon^4)
      \\
      &=
      \left(\gamma,H\right)
      + \left(\upp{\gamma}{2B},\upp{H}{2B}\right)
      + \left(\upp{\gamma}{2S},\upp{H}{2S}\right)
      + \dot{\Bcal}[\delta]\upp{g}{2B}
      +O\left(\epsilon^3\right)
    \end{aligned}
  \end{equation}
(having used,
in addition to \eqref{2B_second-order},
the estimates
\eqref{g1Phi1est}
and
\eqref{etauest}
and, for the final line,
also the expansion
\eqref{Bg2S_exp}).

Accordingly we apply Proposition \ref{hom} to find 
$\left(\upp{g}{2B},\upp{\Phi}{2B}\right)$
solving
  \begin{equation}
  \label{2B}
    \Sdot[\delta,1]\left(\upp{g}{2B},\upp{\Phi}{2B}\right)=0
    \qquad \mbox{and} \qquad
    \Bdot[\delta]\left(\upp{g}{2B}\right)
    =
    -\left(\upp{\gamma}{2B},\upp{H}{2B}\right) - \left(\upp{\gamma}{2S},\upp{H}{2S}\right).
  \end{equation}
Note that \eqref{2B_second-order}
is then ensured,
in view of the definitions
\eqref{secondorderdefs}
and the estimates
\eqref{vfest}
and \eqref{etauest}.

\section{Mass estimate}
By iteratively correcting
the boundary and interior geometry
as in the above stages
(or applying the contraction mapping lemma
as in the proof of Proposition 2.51 in \cites{Wbmoss})
we obtain a static vacuum metric $(g,\Phi)$
with the prescribed boundary data,
  \begin{equation}
    \Scal[g,\Phi]=(0,0)
    \qquad \mbox{and} \qquad
    \Bcal[g]=(\gamma,H),
  \end{equation}
and satisfying
  \begin{equation}
    g=\up{g}{1}+\upp{g}{2S}+\upp{g}{2B}+O(\epsilon^3).
  \end{equation}
Consequently, 
  \begin{equation}
    \madm[g]
    =
    \madm\left[\up{g}{1}\right]
      +\madm\left[\delta+\upp{g}{2S}+\upp{g}{2B}\right]
      +O(\epsilon^3).
  \end{equation}

From \eqref{keysys}, \eqref{rhophidef}, and \eqref{g1Phi1def}
we get (as in \cites{Wbmoss})
  \begin{equation}
  \label{m1}
    \upp{m}{1}
    :=
    \madm\left[\up{g}{1}\right]
    =
    \frac{1}{16\pi}\int_{\partial M} -2v_{,r}
    =
    \frac{1}{16\pi}\int_{\partial M} \left(
                        2\upp{H}{1}
                        -\upp{\gamma}{1}^\sigma_{\;\;\sigma}
                      \right).
  \end{equation}
Similarly,
recalling \eqref{2B} and setting
  \begin{equation}
    \upp{m}{2B}
    :=
    \frac{1}{16\pi} \int_{\partial M}
      \left(\upp{\gamma}{2B}^\sigma_{\;\;\sigma}-2\upp{H}{2B}\right)
    \qquad \mbox{and} \qquad
    \upp{m}{2S}
    :=
    \frac{1}{16\pi} \int_{\partial M}
      \left(\upp{\gamma}{2S}^\sigma_{\;\;\sigma}-2\upp{H}{2S}\right),
  \end{equation}
we also have
  \begin{equation}
    \madm\left[\delta+\upp{g}{2B}\right]
    =
    \upp{m}{2B}+\upp{m}{2S}.
  \end{equation}

Using \eqref{t2g} and \eqref{t2H}
(and integration by parts) we obtain
  \begin{equation}
    \upp{m}{2B}
    =
    \frac{1}{16\pi} \int_{\partial M}
    \left[
      3(f+v)(\Delta+2)f
      +\frac{3}{2}v^2-3vv_{,r}
      +\abs{DX}^2-\abs{X}^2
      +2X^{\alpha:\beta}f_{:\alpha\beta}
      +2f {X^\sigma}_{:\sigma}
    \right].
  \end{equation}
By \eqref{keysys}(d) we have
  \begin{equation}
  \label{DeltaX}
    {X_{\alpha:\beta}}^{:\beta}+X_\alpha
    =
    \upp{\gamma}{1}_{\alpha\beta}^{\;\;\;\;\::\beta}
      -\frac{1}{2}\upp{\gamma}{1}^\sigma_{\;\;\sigma:\alpha}
    =
    \trl{\upp{\gamma}{1}}_{\alpha\beta}^{\;\;\;\;\;:\beta},
  \end{equation}
where
  \begin{equation}
    \trl{\upp{\gamma}{1}}
    :=
    \upp{\gamma}{1}
      -\frac{1}{2}\upp{\gamma}{1}^{\sigma}_{\;\;\sigma}\iota^*\delta.
  \end{equation}
Thus further integration by parts
and another appeal to \eqref{keysys}(d)
yields
  \begin{equation}
    \int_{\partial M}
      \left[\abs{DX}^2-\abs{X}^2\right]
    =
    -\int_{\partial M}
      \left[{X_{\alpha:\beta}}^{:\beta}+X_\alpha\right]X^\alpha
    =
    \int_{\partial M} \trl{\upp{\gamma}{1}}^{\alpha\beta}X_{\alpha:\beta}
    =
    \frac{1}{2}\int_{\partial M} \abs{\trl{\upp{\gamma}{1}}}^2.
  \end{equation}
Additionally, noting that $\partial M$
is the round unit sphere with Ricci curvature $\iota^*\delta$,
  \begin{equation}
    \int_{\partial M} X^{\alpha:\beta}f_{:\alpha\beta}
    =
    -\int_{\partial M} {X^\alpha}_{:\beta\alpha}f^{:\beta}
    =
    -\int_{\partial M}
      \left[
        {X^{\alpha}}_{:\alpha\beta}f^{:\beta}
        +X_\beta f^{:\beta}
      \right]
    =
    \int_{\partial M}
      {X^\sigma}_{:\sigma}(\Delta + 1)f.
  \end{equation}
It follows that
  \begin{equation}
    \upp{m}{2B}
    =
    \frac{1}{16\pi} \int_{\partial M}
    \left[
      (3f+3v+2{X^\sigma}_{:\sigma})(\Delta+2)f
      +\frac{3}{2}v^2-3vv_{,r}
      +\frac{1}{2}\abs{\trl{\upp{\gamma}{1}}}^2
    \right].
  \end{equation}
Using \eqref{keysys} we conclude
  \begin{equation}
    \upp{m}{2B}
    =
    \frac{1}{16\pi} \int_{\partial M}
    \left[
      \left(v-f+\upp{\gamma}{1}^\sigma_{\;\;\sigma}\right)
        \left(\upp{H}{1}-(v-v_{,r})\right)
      +\frac{3}{2}v^2-3vv_{,r}
      +\frac{1}{2}\abs{\trl{\upp{\gamma}{1}}}^2
    \right].
  \end{equation}

On the other hand,
by the definition of
$\left(\upp{\gamma}{2S},\upp{H}{2S}\right)$
in terms of $\upp{g}{2S}$ in \eqref{secondorderdefs},
the definition of $\upp{g}{2S}$ in \eqref{Ssigma}
via Proposition \ref{inhom}
(particularly item (ii)),
and the result of \eqref{C} (in Appendix \ref{massapp})
  \begin{equation}
  \begin{aligned}
    16\pi \upp{m}{2S} + O(\epsilon^4)
    &=
    -\int_{\partial M} \left(
        \upp{g}{2S}_{ij}^{\;\;\; ;j}-{\upp{g}{2S}^j}_{j;i}
      \right)x^i
      =
      \frac{1}{2} \int_{\partial M} {\upp{g}{2S}^j}_{j;i}x^i \\
      &=
      -16\pi\madm[\delta+\upp{g}{2S}]
      -\frac{1}{2} \int_M \upp{g}{2S}^{c \;\;\;\;\; ;d}_{\;\; c;d}.
  \end{aligned}
  \end{equation}
In turn \eqref{Deltatretaintest} yields
  \begin{equation}
    \upp{m}{2S} + \madm[\delta+\upp{g}{2S}]
    =
    \frac{1}{16\pi}\int_{\partial_M}
      \left[
        \frac{3}{2}vv_{,r}+(v-v_{,r})\upp{\gamma}{1}^\sigma_{\;\;\sigma}
          -2v\upp{H}{1}
      \right] + O(\epsilon^3).
  \end{equation}
Summing, we obtain the estimate
$\upp{m}{2}$ of Theorem \ref{mainthm}.

\section{Application to small spheres}
We will now apply our estimate to the case that
the data $(\gamma,H)$ correspond
to the boundary of a small metric ball,
appropriately scaled.
Specifically,
fix a Riemannian $3$-manifold $(N,h)$
and a point $p \in N$;
identify $\Sph^2=\partial M$
with the unit sphere in $T_pN$
and
for each $\tau \in \R$
define $\varphi_\tau: \Sph^2 \to N$
by $\varphi_\tau(v):=\exp^{(N,h)}_p \tau v$
($\exp_p^{(N,h)}: T_pN \to N$
being the exponential map of $(N,h)$ at $p$);
and (for $\tau>0$ sufficiently small) set
  \begin{equation}
    (\gamma,H)=(\gamma[\tau], H[\tau])
    :=
    (\varphi_\tau^*\tau^{-2}h, \Hcal[\varphi_\tau, \tau^{-2}h]),
  \end{equation}
so that the metric ball in $(N,h)$
of center $p$ and radius $\tau$
has induced metric $\tau^2 \gamma[\tau]$
and mean curvature $\tau H[\tau]$,
both pulled back to $\partial M$.

We then have (see for example \cites{Wbmoss})
the well known Taylor expansions
  \begin{equation}
  \label{Taylor}
    \begin{aligned}
      &\upp{\gamma}{1}_{\alpha\beta}
      =
      \frac{1}{3}\tau^2R_{\alpha r \beta r}
        +\frac{1}{6}\tau^3R_{\alpha r \beta r|r} 
        +\tau^4
          \left(
            \frac{1}{20}R_{\alpha r \beta r | rr}
            +\frac{2}{45}R_{\alpha rcr}{R^c}_{r \beta r}
          \right) + O(\tau^5) \mbox{ and} \\
      &\upp{H}{1}
      =
      \frac{1}{3}\tau^2R_{rr}
        +\frac{1}{4}\tau^3R_{rr|r}
        +\tau^4
          \left(
            \frac{1}{10}R_{rr|rr}+\frac{1}{45}R_{crdr}R^{crdr}
          \right) + O(\tau^5),
    \end{aligned}
  \end{equation}
where $R_{abcd}$ is the Riemann curvature tensor
(and $R_{ab}$ the Ricci curvature)
of $(N,h)$
(following the curvature conventions
declared in \eqref{curvatureconvention})
evaluated at $p$,
the vertical bar $|$
indicates differentiation via
the Levi-Civita connection induced by $h$,
also evaluated at $p$,
and each $r$ index indicates contraction with
$\nu:=\partial_r \circ \iota$,
the unit normal of $\partial M$
directed into $M$ and regarded as an element of $T_pN$.
Alternatively,
we may regard the curvature tensors
(and their contractions and derivatives)
in the above expansions
as parallel tensors over $\Sph^2=\partial M$,
writing for example
  \begin{equation}
    \upp{\gamma}{1}_{\alpha\beta}
    =
    \frac{1}{3}\tau^2 R_{abcd}{\iota^a}_{,\alpha}
      \nu^b{\iota^c}_{,\beta}\nu^d
      + \cdots,
  \end{equation}
etc.
We may also choose to specify a point on $\partial M$
by its standard Euclidean coordinates,
 or position vector, in $\R^3$
and thereby write
  \begin{equation}
    \upp{\gamma}{1}_{\alpha\beta}(\vec{x})
    =
    \frac{1}{3}\tau^2R_{\alpha i \beta j}x^ix^j + \cdots,
  \end{equation}
etc.
In the following we will take the liberty of applying
whichever of these notational options
best suits our purposes at a given step.

We wish to estimate
the ADM mass $m=m[\tau]=m[\gamma,H]$
of the unique
small static vacuum extension 
of $(\gamma, H)$,
within an error of order $\tau^5$.
Since $\norm{\upp{\gamma}{1}, \upp{H}{1}}_{\mathcal{B}}=O(\tau^2)$,
we have
  \begin{equation}
    m=\upp{m}{1}+\upp{m}{2}+O(\tau^6),
  \end{equation}
recalling \eqref{m1m2}
From \eqref{Taylor}
it follows readily (as in \cites{Wbmoss}) that
  \begin{equation}
    \upp{m}{1}
    =
    \upp{m}{1}[\gamma,H]
    =\frac{1}{12}R\tau^2 + \frac{1}{120}\Delta R \tau^4
      + O(\tau^5)
  \end{equation}
(where $R$ is the scalar curvature of $(N,h)$ at $p$).
To achieve our goal
we will now estimate $\upp{m}{2}$
up to order $\tau^4$,
which means we need only keep track
of the leading terms in \eqref{Taylor}.

From \eqref{keysys},
making use of \eqref{DeltaX}
and the identity
$\iota^*\Delta_\delta=\Delta_{\iota^*\delta}\iota^*
 +2\iota^*\partial_r+\iota^*\partial_r^2$,
we find
  \begin{equation}
    \iota^*v_{,rr}
    =
    2\upp{H}{1}
      +\trl{\upp{\gamma}{1}}_{\alpha\beta}^{\;\;\;\;\;:\beta\alpha}
      -\frac{1}{2}(\Delta+2)\upp{\gamma}{1}^\sigma_{\;\;\sigma}.
  \end{equation}
By \eqref{Taylor}
  \begin{equation}
    \begin{aligned}
      &2\upp{H}{1}+O(\tau^3)=\frac{2}{3}\tau^2R_{rr}, \\
      &\upp{\gamma}{1}^\sigma_{\;\;\sigma} + O(\tau^3)
      =
      -\frac{1}{3}\tau^2R_{rr}, \mbox{ and} \\
      &\upp{\gamma}{1}_{\alpha\beta}^{\;\;\;\;\;:\beta}
        +O(\tau^3)
      =
      \frac{1}{3}\tau^2R_{abcd}
       ({\iota^a}_{,\alpha}\nu^b
          {\iota^c}_{,\beta}\nu^d)^{:\beta}
      =
      \frac{1}{3}\tau^2R_{\alpha r},
    \end{aligned}
  \end{equation}
and so
  \begin{equation}
  \begin{aligned}
    &\upp{\gamma}{1}_{\alpha\beta}^{\;\;\;\;\;:\beta\alpha}
      +O(\tau^3)
    =
    \frac{1}{3}\tau^2(R_{ab}{\iota^a}_{,\alpha}\nu^b)^{:\alpha}
    =
    \frac{1}{3}\tau^2(-2R_{rr}+{R_\sigma}^\sigma)
    =
    \left(\frac{1}{3}R-R_{rr}\right)\tau^2, \\
    &-\frac{1}{2}\Delta \upp{\gamma}{1}^\sigma_{\;\;\sigma}
      +O(\tau^3)
    =
    \frac{1}{6}(R_{ab}\nu^a\nu^b)^{:\sigma}_{\;\;:\sigma}
    =
    \left(\frac{1}{3}R-R_{rr}\right)\tau^2, \mbox{ and} \\
    &\tau^{-2}\iota^*v_{,rr} + O(\tau)
    =
    R-2R_{rr}
    =
    \left(R-2R_{ij}x^ix^j\right)
    =
    \frac{1}{3}R+\left(\frac{2}{3}R-2R_{ij}x^ix^j\right),
  \end{aligned}
  \end{equation}
where in the last step
we have decomposed the leading terms of $v_{,rr}$
into spherical harmonics.
Recalling that $v$ is a harmonic function on $M$ vanishing at infinity,
we conclude that
  \begin{equation}
  \begin{aligned}
    &\iota^*v+O(\tau^3)
    =
    \frac{1}{6}R\tau^2
      +\frac{1}{12}\left(\frac{2}{3}R-2R_{ij}x^ix^j\right)\tau^2
    =
    \left(\frac{2}{9}R-\frac{1}{6}R_{rr}\right)\tau^2 \\
    &\iota^*v_{,r}+O(\tau^3)
    =
    -\frac{1}{6}R\tau^2
      -\frac{1}{4}\left(\frac{2}{3}R-2R_{ij}x^ix^j\right)\tau^2
    =
    \left(-\frac{1}{3}R+\frac{1}{2}R_{rr}\right)\tau^2, \mbox{ and} \\
    &\iota^*(v-v_{,r}) + O(\tau^3)
    =
    \left(\frac{5}{9}R-\frac{2}{3}R_{rr}\right)\tau^2.
  \end{aligned}
  \end{equation}

Next we will solve for $f$.
Referring again to \eqref{keysys},
we see that
  \begin{equation}
    (\Delta+2)f=\upp{H}{1}-\iota^*(v-v_{,r})
  \end{equation}
Thus, from the above,
  \begin{equation}
    (\Delta+2)f + O(\tau^3)
    =
    \left(-\frac{5}{9}R+R_{rr}\right)\tau^2
    =
    -\frac{2}{9}R\tau^2 + \left(-\frac{1}{3}R+R_{ij}x^ix^j\right)\tau^2,
  \end{equation}
and so
  \begin{equation}
    f+O(\tau^3)
    =
    -\frac{1}{9}R\tau^2-\frac{1}{4}\left(-\frac{1}{3}R+R_{rr}\right)\tau^2
    =
    \left(-\frac{1}{36}R-\frac{1}{4}R_{rr}\right)\tau^2.
  \end{equation}

With the preceding in place we can now compute
  \begin{equation}
  \begin{aligned}
    &\upp{\gamma}{1}^\sigma_{\;\;\sigma}-f-\iota^*v
    =
    \left(-\frac{7}{36}R+\frac{1}{12}R_{rr}\right)\tau^2
      +O(\tau^3), \\
    &\iota^*v+2f
    =
    \left(\frac{1}{6}R-\frac{2}{3}R_{rr}\right)\tau^2+O(\tau^3), \\
    &\upp{H}{1}
      \left(\upp{\gamma}{1}^\sigma_{\;\;\sigma}-f-\iota^*v\right)
    =
    \left(-\frac{7}{108}RR_{rr}+\frac{1}{36}R_{rr}^2\right)\tau^4
      +O(\tau^5), \\
    &\frac{1}{2}(v-v_{,r})(v+2f)
    =
    \left(
      \frac{5}{108}R^2-\frac{13}{54}RR_{rr}+\frac{2}{9}R_{rr}^2
    \right)\tau^4+O(\tau^5), \mbox{ and} \\
    &\frac{1}{2}\abs{\trl{\upp{\gamma}{1}}}^2
    =
    \frac{1}{2}\abs{\upp{\gamma}{1}}^2
      -\frac{1}{4}\left(\upp{\gamma}{1}^\sigma_{\;\;\sigma}\right)^2
    =
    \left(
      \frac{1}{18}R_{arbr}R^{arbr}-\frac{1}{36}R_{rr}^2
    \right)\tau^4+O(\tau^5).
  \end{aligned}
  \end{equation}

In turn we find
  \begin{equation}
    \upp{m}{2}
    =
    \frac{\tau^4}{16\pi} \int_{\Sph^2}
      \left[
        \left(
          \frac{1}{18}R_{aibj}R^{akb\ell}+\frac{2}{9}R_{ij}R_{k\ell}
        \right)x^ix^jx^kx^\ell
          +\frac{5}{108}R^2 - \frac{11}{36}RR_{ij}x^ix^j
      \right] + O(\tau^5).
  \end{equation}
Using
  \begin{equation}
    \begin{aligned}
      &\frac{1}{16\pi} \int_{\Sph^2} 1 = \frac{1}{4},
      \qquad
      \frac{1}{16\pi} \int_{\Sph^2} x^ix^j
        =\frac{1}{12}\delta^{ij},
      \quad \mbox{and} \\
      &\frac{1}{16\pi} \int_{\Sph^2} x^ix^jx^kx^\ell
        =
        \frac{1}{60}
          \left(  
            \delta^{ij}\delta^{k\ell}
            +\delta^{ik}\delta^{j\ell}
            +\delta^{i\ell}\delta^{jk}
          \right),
    \end{aligned}
  \end{equation}
we obtain
  \begin{equation}
    \upp{m}{2}
    =
    \frac{1}{1080}\abs{\operatorname{Riem}}^2\tau^4
        +\frac{1}{1080}R_{abcd}R^{adcb}\tau^4
        +\frac{1}{120}\abs{\Ric}^2\tau^4
        -\frac{11}{1080}R^2\tau^4
        +O(\tau^5).
  \end{equation}
Applying \eqref{Riemquad} we conclude
  \begin{equation}
    \upp{m}{2}
    =
    \frac{1}{72}\abs{\Ric}^2\tau^4-\frac{5}{432}R^2\tau^4+O(\tau^5)
    =
    \frac{1}{432}\left(6\abs{\Ric}^2-5R^2\right)\tau^4+O(\tau^5),
  \end{equation}
completing the proof of Corollary \ref{cor}.

\appendix

\section{Riemann curvature conventions and identities}
\label{Riem}
We adopt the convention that the Riemann curvature tensor
$R_{abcd}=\operatorname{Riem}$ of a Riemannian manifold $(M,g)$ satisfies
  \begin{equation}
  \label{curvatureconvention}
    {X^d}_{|ba}-{X^d}_{|ab}
    =
    X^c{R_{abc}}^d
  \end{equation}
for all smooth vector fields $X$
(the vertical bar indicating differentation
via the Levi-Civita connection
with respect to the indices following it).
Then $R_{abcd}=-R_{bacd}=-R_{abdc}=R_{cdab}$.
We denote the corresponding Ricci curvature
by $\Ric=R_{ab}={R_{cab}}^c=R_{ac\;\;\;b}^{\;\;\;\;c}$
and the scalar curvature by $R$.
For any given twice-differentiable symmetric
tensor $h_{ab}$ one readily computes the
linearized Riemann curvature 
\begin{equation}
\label{linRiem}
\begin{aligned}
  \dot{R}_{abc}^{\;\;\;\;\;d}[g]h
  &:=
  \left.\frac{d}{dt}\right|_{t=0}
    R_{abc}^{\;\;\;\;\;d}[g+th] \\
  &=
  \frac{1}{2}
  \left(
    {h^d}_{b|ca}
    +h_{ac\;\;\;|b}^{\;\;\;\;|d}
    -{h^d}_{a|cb}
    -h_{bc\;\;\;|a}^{\;\;\;\;|d}
    +{R_{abf}}^d {h_c}^f
    +R_{abfc} h^{df}
  \right).
\end{aligned}
\end{equation}

\subsection*{Dimension 3}
Now assume that $M$ has dimension $3$
and let $\epsilon_{abc}$
be a choice of orientation form (at least locally defined).
A short calculation reveals the identity
  \begin{equation}
  \label{epsilonG}
    \epsilon_{abx}\epsilon_{cdy}R^{abcd}=4G_{xy},
  \end{equation}
where $G_{ab}:=R_{ab}-\frac{1}{2}Rg_{ab}$ is the Einstein tensor,
and in turn we have
  \begin{equation}
    R_{abcd}=\epsilon_{abx}\epsilon_{cdy}G^{xy}
    =
    R_{ad}g_{bc}+R_{bc}g_{ad}-R_{ac}g_{bd}-R_{bd}g_{ac}
    +\frac{1}{2}Rg_{ac}g_{bd}-\frac{1}{2}Rg_{ad}g_{bc}.
  \end{equation}
In particular
  \begin{equation}
  \label{Riemquad}
    \abs{\operatorname{Riem}}^2=4\abs{\Ric}^2-R^2
    \qquad \mbox{and} \qquad
    R_{abcd}R^{adcb}=2\abs{\Ric}^2-\frac{1}{2}R^2.
  \end{equation}
Furthermore, it follows from \eqref{linRiem}
and \eqref{epsilonG}
that for $M \subset \R^3$
the linearization $\dot{G}[\delta]$
of the Einstein tensor about the Euclidean
metric $\delta$ is given by
\begin{equation}
\label{incompatibility}
  \dot{G}_{xy}[\delta]h
  =
  \frac{1}{2}\epsilon_{abx}\epsilon_{cdy}h^{bd|ca}.
\end{equation}

\begin{prop}
[Triviality of first-order Ricci-flat deformations]
\label{trivialityRicFlat}
Let $\Omega$ be a simply connected
open subset of $\R^3$.
If $h_{ab}$ is a symmetric tensor on $\Omega$
such that $\dot{R}_{ab}[\delta]h=0$,
then there is a vector field $X^a$
on $\Omega$ such that
$h_{ab} = X_{a;b} + X_{b;a} = L_X \delta_{ab}$.
\end{prop}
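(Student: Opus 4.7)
The plan is to use the dimension-$3$ fact that the Riemann tensor is algebraically determined by the Einstein tensor --- as encoded in the identity just after \eqref{epsilonG} --- to upgrade the hypothesis $\dot{R}_{ab}[\delta]h=0$ to the vanishing of the full linearized Riemann tensor, and then to apply the Poincar\'e lemma twice on the simply connected domain $\Omega$ to produce $X$. First, trace the hypothesis against $\delta^{ab}$ to get $\dot{R}[\delta]h=0$, whence $\dot{G}_{ab}[\delta]h=\dot{R}_{ab}[\delta]h-\tfrac{1}{2}\dot{R}[\delta]h\,\delta_{ab}=0$. Now linearize $R_{abcd}=\epsilon_{abx}\epsilon_{cdy}G^{xy}$ at $\delta$: the variations of $\epsilon_{abc}$ and of $g^{xy}$ appear only in terms multiplied by the background Einstein tensor and therefore drop out, leaving $\dot{R}_{abcd}[\delta]h=\epsilon_{abx}\epsilon_{cdy}\dot{G}^{xy}[\delta]h=0$.

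Next, introduce the linearized Christoffel symbols $\Gamma^c{}_{ab}:=\tfrac{1}{2}\delta^{cd}(h_{da,b}+h_{db,a}-h_{ab,d})$, symmetric in $(a,b)$. Against the flat background the linearized Riemann takes the simple form $\dot{R}^c{}_{dab}[\delta]h=\partial_a\Gamma^c{}_{db}-\partial_b\Gamma^c{}_{da}$, so its vanishing says that for each fixed pair $(c,d)$ the $1$-form $\Gamma^c{}_{da}\,dx^a$ is closed on $\Omega$. Simple connectedness and the Poincar\'e lemma then produce functions $Y^c{}_d$ with $\Gamma^c{}_{da}=\partial_a Y^c{}_d$, and the symmetry of $\Gamma^c{}_{da}$ in $(a,d)$ makes $Y^c{}_d\,dx^d$ closed as well. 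A second application of the lemma yields $X^c$ with $Y^c{}_d=\partial_d X^c$, so that $\Gamma^c{}_{ab}=\partial_a\partial_b X^c$.

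To conclude, lower indices and symmetrize $\Gamma_{cab}=\partial_a\partial_b X_c$ over $(c,a)$; the left side collapses to $\tfrac{1}{2}\partial_b h_{ca}$, while the right side becomes $\tfrac{1}{2}\partial_b(X_{c,a}+X_{a,c})$, so $h_{ca}-(X_{c,a}+X_{a,c})$ is a constant symmetric tensor $C_{ca}$, which can be absorbed by replacing $X_a$ with $X_a+\tfrac{1}{2}C_{ab}x^b$. The only step that deserves care is the dimension-$3$ reduction at the start, where one must confirm that linearizing \eqref{epsilonG} at the flat metric picks up no unwanted contributions from variations of $\epsilon_{abc}$ or of the inverse metric; this holds precisely because each such contribution is multiplied by a component of the background curvature and therefore vanishes.
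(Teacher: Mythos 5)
Your argument is correct and is essentially the classical ``infinitesimal rigidity'' computation from linear elasticity that the paper gestures at in the first sentence of its proof (the representation \eqref{incompatibility} plus the Poincar\'e lemma, with the details delegated to Gurtin). The paper's own detailed proof takes a different route: after deducing $\dot{R}_{abcd}[\delta]h=0$ in dimension $3$ exactly as you do, it pulls $g_t = \delta + th$ back by the exponential map $\phi_t$ of $g_t$ at a point, observes that vanishing of the linearized curvature forces $\left.\frac{d}{dt}\right|_{t=0}\phi_t^*g_t = 0$, hence $h = L_X\delta$ locally with $X = -\left.\frac{d}{dt}\right|_{t=0}\phi_t$, and then globalizes this local statement over the simply connected $\Omega$ using that two local candidates for $X$ differ by a Killing field of $\R^3$. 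Your version is more explicit and elementary --- two applications of the Poincar\'e lemma to the linearized Christoffel symbols, with a final integration and absorption of the constant --- and it produces $X$ by elementary potential theory. The paper's version is more geometric and globalizes via a monodromy argument over the finite-dimensional Killing algebra; it also adapts readily to higher dimensions once the vanishing of the full linearized curvature is known. Your dimension-$3$ reduction at the start (linearizing $R_{abcd} = \epsilon_{abx}\epsilon_{cdy}G^{xy}$ at $\delta$ and observing that variations of $\epsilon$ and of the inverse metric are killed by the flat background curvature) is exactly the right point to be careful about and you handle it correctly.
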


\begin{proof}
The claim can be established
using the representation
\eqref{incompatibility}
and the Poincar\'{e} lemma;
see for example Section 14 of \cites{Gurtin}.
Alternatively,
for any given $h_{ab} \in C^2_{loc}(T^{\odot 2}\Omega)$
and $p \in \Omega$
there is a sufficiently small
origin-centered open ball $B$
in $\R^3$ such that for all sufficiently small $t$
each exponential map $\phi_t$ of $g_t := \delta + th$
at $p$ is a diffeomorphism of $B$ onto its image.
Assuming $\dot{R}_{ab}[\delta]h=0$,
we have also (in dimension $3$)
$\dot{R}_{abcd}[\delta]h=0$,
but then
$\left.\frac{d}{dt}\right|_{t=0}\phi_t^*g_t = 0$,
meaning that $h=L_X \delta$
on $B$
with $X=-\left.\frac{d}{dt}\right|_{t=0}\phi_t$.
Furthermore,
if $Y$ and $Z$ are vector fields
on a connected open subset $\mathcal{U} \subset \Omega$
with $h=L_Y\delta$ on $\mathcal{U}$,
then $h=L_Z\delta$ on $\mathcal{U}$ as well if 
and only if $Y-Z$ is the restriction
of a Killing field on $\R^3$.
The existence of a global $X$ on $\Omega$
such that $h=L_X\delta$
now follows from the assumption
that $\Omega$ is simply connected.
\end{proof}

\subsection*{Dimension 2}
Note that,
since the Einstein tensor vanishes in dimension $2$,
it follows directly from \eqref{linRiem}
that any twice differentiable
transverse (that is having vanishing divergence)
traceless symmetric tensor $\eta_{ab}$ on $(M,g)$
satisfies
\begin{equation}
\label{dim2TT}
  {\eta_{ab|c}}^{|c} - 2R\eta_{ab} = 0.
\end{equation}

\section{Ricci curvature under conformal change of metric}
We will derive the well-known expression
for the transformation of Ricci curvature
under conformal change of metric
by applying the standard formula
for the first variation of mean curvature
under normal deformation.
Let $(M,g)$ be a Riemannian manifold
of dimension $\dim M = n+1$,
and let $\rho \in C^2_{loc}(M)$ be strictly positive.
We will make use of the identities
\begin{align}
\label{Laplacianconf}
  &\Delta_{\rho g} u
  =
  \rho^{-1}\Delta_g u
    + \frac{\dim M - 2}{2}\rho^{-2}g^{cd}\rho_{|c}u_{|d}, \\
\label{restrictedLaplacian}
  &\left(\Delta_g u\right)|_\Sigma
  =
  \Delta_{\iota^*g}u|_\Sigma
    + u_{|ab}|_\Sigma \nu^a \nu^b - Hu_{|a}|_\Sigma \nu^a, \\
\label{AHconf}
  &A[\iota,\rho g]_{\alpha \beta}
  =
  (\iota^*\rho)^{1/2}A[\iota,g]_{\alpha \beta}
   -\frac{1}{2}(\iota^*\rho)^{-1/2}\nu^c
     (\rho_{|c} \circ \iota)(\iota^*g)_{\alpha \beta},
     \quad \mbox{and} \\
 &H[\iota,\rho g]
  =
 (\iota^*\rho)^{-1/2}H[\iota,g]
   -\frac{n}{2}(\iota^*\rho)^{-3/2}
     \nu^c(\rho_{|c} \circ \iota).
\end{align}
where $u \in C^2_{loc}(M)$,
vertical bars before indices 
indicate covariant differentiation defined by $g$,
and $\Sigma$ is an embedded hypersurface of $M$,
with inclusion map $\iota$,
$\nu$ a local choice of unit normal,
$A=A[\iota,g]=-\frac{1}{2} \iota^*L_\nu g$
(for any extension of $\nu$)
and
$H=H[\iota,g]=\tr_{\iota^*g}A$
the corresponding second fundamental form
and mean curvature of $\Sigma$ in $(M,g)$,
and $A[\iota,\rho g]$ and $H[\iota, \rho g]$
the analogously defined second fundamental form
and mean curvature of $\Sigma$ in $(M, \rho g)$.

Given any $p \in M$ and unit vector $U \in T_pM$,
define $\Sigma$ to be the intersection of
a sufficiently small open
neighborhood of $p$ with the union of geodesics
through $p$ orthogonal to $U$,
so that $\Sigma$ is an embedded, two-sided hypersurface,
with $\nu$ the unit normal satisfying $\nu|_p = U$.
We write $\iota: \Sigma \to M$ for the inclusion map,
and for each $t \in \R$ we define the deformed inclusion
$\iota_t: \Sigma \to M$
by $\iota_t(q):=\exp^g_{\iota(q)} t\nu(q)$,
where $\exp^g$ is the exponential map for $(M,g)$.
Then $\iota_t$ is a $C^2_{loc}$ embedding
for sufficiently small $t$,
and we write $\nu(t)$
for the choice of unit normal for $\iota_t$
which is continuous in $t$ and agrees with $\nu$ at $t=0$.

Using (\ref{AHconf}) we find
  \begin{equation}
  \label{AH}
    \begin{aligned}
      &A[\iota, g]=0, \quad H[\iota, g]=0, \quad
      A[\iota, \rho g]
        =
        -\frac{1}{2}\rho|_{\Sigma}^{-1/2}
           \nu^c(\rho_{|c} \circ \iota)(\iota^*g), 
        \mbox{ and} \\
      &H[\iota_t, \rho g]
      =
      (\rho \circ \iota_t)^{-1/2}H[\iota_t, g]
        -\frac{n}{2}(\rho \circ \iota_t)^{-3/2}
         \nu^c(t)(\rho_{|c} \circ \iota_t),
    \end{aligned}
  \end{equation}
and differentiation then yields
\begin{equation}
\begin{aligned}
\label{Hdot1}
  \left.\frac{d}{dt}\right|_{t=0}H[\iota_t,\rho g]
  =
  &\rho|_\Sigma^{-1/2}\left.\frac{dH[\iota_t,g]}{dt}\right|_{t=0}
    +\frac{3n}{4}\rho|_\Sigma^{-5/2}\left[\nu^c(\rho_{|c} \circ \iota)\right]^2 \\
    &-\frac{n}{2}\rho|_\Sigma^{-3/2}\nu^c\nu^d(\rho_{|_g cd} \circ \iota).
\end{aligned}
\end{equation}
On the other hand from the standard expression
for the first variation of mean curvature
we have of course
$\left.\frac{d}{dt}\right|_{t=0} H[\iota_t, g]
  = R_{ab}[g]\nu^a\nu^b
$
and, using also the expression for $A[\iota, \rho g]$
in \eqref{AH} again,
\begin{equation}
\label{Hdot2}
  \begin{aligned}
    \left. \frac{d}{dt}\right|_{t=0} H[\iota_t,\rho g]
    =
    \Delta_{\rho|_\Sigma \iota^*g}\rho|_\Sigma^{1/2}
      +\frac{n}{4}\rho|_\Sigma^{-5/2}\left[\nu^c(\rho_{|c} \circ \iota)\right]^2
      +\rho|_\Sigma^{-1/2}R_{ab}[\rho g]\nu^a\nu^b.
  \end{aligned}
\end{equation}
Comparing, we obtain
\begin{equation}
\begin{aligned}
  R_{ab}[\rho g]\nu^a\nu^b
  =
  &R_{ab}[g]\nu^a\nu^b
     +\frac{n}{2}\rho|_\Sigma^{-2}[\nu^c(\rho_{|c} \circ \iota)]^2 \\
    &-\frac{n}{2}\rho|_\Sigma^{-1}\nu^c\nu^d(\rho_{|_g cd} \circ \iota)
    -\rho|_\Sigma^{1/2}\Delta_{\rho|_\Sigma \iota^*g}\rho|_\Sigma^{1/2}.
\end{aligned}
\end{equation}

From (\ref{Laplacianconf}) and (\ref{restrictedLaplacian}),
bearing in mind that $H[\iota,g]=0$,
  \begin{equation}
    \begin{aligned}
      \Delta_{\rho|_\Sigma \iota^*g}\rho^{1/2}
      =
      &\rho|_{\Sigma}^{-1}\left.\left(\Delta_g \rho^{1/2}\right)\right|_\Sigma
        -\rho|_{\Sigma}^{-1}D^2[g]\left(\rho^{1/2}\right)(\nu,\nu)|_\Sigma \\
        &+ \frac{n-2}{4}\rho|_\Sigma^{-5/2}\left.\abs{\nabla_g \rho}_g^2\right|_\Sigma
        -\frac{n-2}{4}\rho|_\Sigma^{-5/2}[\nu^c(\rho_{|c} \circ \iota)]^2.
    \end{aligned}
  \end{equation}
Since $U=\nu|_p$ was arbitrary
and $R_{ab}$ is symmetric,
by polarization we conclude,
after some simplification,
\begin{equation}
\begin{aligned}
  R_{ab}[\rho g]
  =
  &R_{ab}[g]+\frac{3(\dim M - 2)}{4}\rho^{-2}\rho_{|a}\rho_{|b}
    -\frac{\dim M - 2}{2}\rho^{-1}\rho_{|_g ab} \\
   &
    -\frac{1}{2}\rho^{-1}\left(\Delta_g \rho\right)g_{ab}
    -\frac{\dim M - 4}{4}\rho^{-2}\abs{\nabla_g \rho}_g^2g_{ab}.
\end{aligned}
\end{equation}
In particular, in three dimensions
\begin{equation}
\label{confric}
\begin{aligned}
  R_{ab}[M^3, \rho g]
  =
  &R_{ab}[M^3,g]+\frac{3}{4}\rho^{-2}\rho_{|a}\rho_{|b}
   -\frac{1}{2}\rho^{-1}\rho_{|_g ab} \\
   &-\frac{1}{2}\rho^{-1}\left(\Delta_g \rho\right)g_{ab}
   +\frac{1}{4}\rho^{-2}\abs{\nabla_g \rho}_g^2g_{ab}.
\end{aligned}
\end{equation}

\section{Mass and variation of mean curvature}
\label{massapp}
Let
$S$ and $M$ be smooth manifolds,
$\phi: S \to M$ a two-sided $C^2_{loc}$ codimension-one immersion,
and $\{g(t)\}_{t \in \R}$ a smooth one-parameter family of
$C^1_{loc}$ Riemannian metrics on $M$.
Pick a corresponding smooth one-parameter family
$\{\nu(t)\}$
of unit normals on $S$,
inducing corresponding
second fundamental form
$A_{\alpha\beta}(t)
 :=D[g(t)]_\alpha {\phi^c}_{,\beta}\nu^d(g_{cd}(t) \circ \phi)$
and mean curvature
$H(t):=(\phi^*g(t))^{\alpha\beta}A_{\alpha\beta}(t)$.
Set
$\gamma:=\phi^*g(0)$,
$A_{\alpha\beta}:=A_{\alpha\beta}(0)$,
$H:=H(0)$,
$\dot{g}:=\partial_t|_{t=0}g(t) \circ \phi$,
$\dot{\gamma}:=\partial_t|_{t=0} \phi^*\dot{g}(t)$,
and $\dot{H}:=\partial_t|_{t=0}H(t)$.
Then it is easy to compute
(as for example in Appendix B of \cites{Wbmoss})
that
  \begin{equation}
    \dot{H}
    =
    -A^{\alpha\beta}\dot{\gamma}_{\alpha\beta}
      +\frac{1}{2}H\dot{g}_{ab}\nu^a\nu^b
      +\frac{1}{2}\nu^c
        \left(\dot{g}_{c\sigma}^{\;\;\;\;|\sigma}
        +\dot{g}_{c\sigma}^{\;\;\;\;|\sigma}
        -\dot{g}^\sigma_{\;\;\sigma|c}\right),
  \end{equation}
where
vertical bars indicate differentiation relative to $g(0)$,
Roman (Greek) indices are raised and lowered
via $g(0)$ ($\gamma$)
and a Greek index on $\dot{g}$ indicates
(partial) pullback by $d\phi$, so that for example
$\dot{g}_{\alpha b}=\dot{g}_{ab}{\phi^a}_{,\alpha}$.

Of course
  \begin{equation}
    \begin{aligned}
      &\nu^c\dot{g}_{c\sigma}^{\;\;\;\;|\sigma}
      =
      \dot{g}_{cd}^{\;\;\;\;|d}\nu^c-\dot{g}_{ab|c}\nu^a\nu^b\nu^c, \\
      &\nu^c\dot{g}^\sigma_{\;\;\sigma|c}
      =
      \dot{g}^d_{\;\;d|c}-\dot{g}_{ab|c}\nu^a\nu^b\nu^c,
        \mbox{ and} \\
      &\nu^c\dot{g}_{c\sigma}^{\;\;\;\;|\sigma}
      =
      \left(\nu^c \dot{g}_c^{\;\;\sigma}\right)_{|\sigma}
        -A^{\rho\sigma}\dot{\gamma}_{\rho\sigma}
      =
      \left(\nu^c \dot{g}_c^{\;\;\sigma}\right)_{:\sigma}
        +H\dot{g}_{ab}\nu^a\nu^b-A^{\rho\sigma}\dot{\gamma}_{\rho\sigma},
    \end{aligned}
  \end{equation}
where colons indicate differentiation relative to $\gamma$.
It thus follows that
  \begin{equation}
    2\dot{H}
    =
    -A^{\alpha\beta}\dot{\gamma}_{\alpha\beta}
    +\nu^d\left(\dot{g}_{cd}^{\;\;\;\;|c}-\dot{g}^{c}_{\;\;c|d}\right)
    -\left(\nu^c\dot{g}_c^{\;\;\sigma}\right)_{:\sigma}.
  \end{equation}
If $S$ is closed, then
  \begin{equation}
  \label{C}
    \int_S
      \left(2\dot{H}+A^{\alpha\beta}\dot{\gamma}_{\alpha\beta}\right)
      \, \sqrt{\abs{\gamma}}
    =
    \int_S
      \left(\dot{g}_{cd}^{\;\;\;\;|c}-\dot{g}^{c}_{\;\;c|d}\right)
      \nu^d \, \sqrt{\abs{\gamma}}.
  \end{equation}

\section{Second variation of $\Bcal$}

\subsection*{Variation with respect to $f$ and $X$}
Given any $C^1$ map (not necessarily an immersion) $\varphi: P \to M$
from a smooth manifold $P$ into a smooth Riemannian manifold $(M,g)$
with corresponding Levi-Civita connection $D[TM,g]$,
we write $D[\varphi^*TM,g]$ for the unique connection on $\varphi^*TM$
satisfying the chain rule
  \begin{equation}
    D[\varphi^*TM]_\alpha\left(Z^c \circ \varphi\right)
    =
    \left(D[TM,g]_aZ^c {\varphi^a}_{,\alpha}\right) \circ \varphi.
  \end{equation}
Then $D[\varphi^*TM]$ is torsion-free and metric-compatible in the obvious senses.
We reserve the right to write simply $D$ in instances
when context suffices to identify the connection we have in mind.

Now let $\phi: S \to M$ be a smooth two-sided (codimension-one) immersion
of a smooth manifold $S$ into a complete smooth Riemannian manifold $(M,g)$
and write $\gamma_{\alpha\beta}:=\left(\phi^*g\right)_{\alpha \beta}$
for the corresponding induced metric on $S$.
Let $\nu \in \phi^*(TM)$ be a global unit normal for $\phi$
and write $A_{\alpha \beta}:=-D_{\alpha} \nu^c {\phi^d}_{,\beta} \left(g_{cd} \circ \phi\right)$
for the corresponding scalar-valued second fundamental form
and $H:=\gamma^{\alpha \beta}A_{\alpha \beta}$ for the corresponding
mean curvature.

Suppose also that $X^\alpha \in C^2(TS)$ and $f \in C^2(S)$
and define the vector fields $\xi^a, Z^a \in \phi^*(TM)$ by
  \begin{equation}
  \label{XxifZ}
    \xi := \phi_* X \qquad \mbox{and} \qquad Z:=\xi + f\nu.
  \end{equation}
In turn define the map the map $\Phi: S \times \R \to M$
and, for each $t \in \R$ the map $\phi[t]: S \to M$ by
  \begin{equation}
    \Phi(p,t):=\exp_{\phi(p)}^{(M,g)} tZ
    \qquad \mbox{and} \qquad
    \phi[t]:=\Phi(\cdot,t),
  \end{equation}
$\exp^{(M,g)}: TM \to M$ being the exponential map on $(M,g)$.
For $\abs{t}$ sufficiently small the map $\phi[t]$ is an immersion with continuous unit normal
$\nu[t] \in \phi[t]^*(TM)$ chosen so that $\nu[0]=\nu$
and so as to make continuous the vector field $N \in \Phi^*TM$
given by $N(p,t):=\nu[t](p)$.
We set $\gamma[t]:=\phi[t]^*g$,
$A[t]_{\alpha \beta}:=-D_{\alpha} \nu[t]^c {\phi[t]^d}_{,\beta} \left(g_{cd} \circ \phi[t]\right)$,
and $H[t]:=\gamma[t]^{\alpha\beta}A[t]_{\alpha\beta}$
(so that $\gamma[0]=\gamma$, $A[0]=A$, and $H[0]=H$).
We also set $\dot{\gamma}:=\partial_t \gamma[t]|_{t=0}$
and $\ddot{\gamma}:=\partial_t^2 \gamma[t]|_{t=0}$
and adopt like notation for $t$-derivatives of $A$ and $H$ at $t=0$.

Now we compute
  \begin{equation}
    \partial_t (g \circ \Phi)(\Phi_*V,\Phi_*W)
    =
    (g \circ \Phi)(D_V \Phi_*\partial_t, \Phi_*W) + (g \circ \Phi)(\Phi_*V,D_W \Phi_*\partial_t)
  \end{equation}
and in turn
  \begin{equation}
    \partial_t^2 (g \circ \Phi)(\Phi_*V,\Phi_*W)
    =
    2(g \circ \Phi)((R \circ \phi)(\Phi_*\partial_t,\Phi_*V)\Phi_*\partial_t,\Phi_*W)
      +2(g \circ \Phi)(D_V\Phi_*\partial_t, D_W \Phi_*\partial_t),
  \end{equation}
$R$ being the Riemann curvature of $(M,g)$, our conventions specified by
\eqref{curvatureconvention}.
It follows that
  \begin{equation}
  \label{gammavar}
    \dot{\gamma}_{\alpha \beta}
    =
    Z_{c|\alpha} {\phi^c}_{,\beta}+Z_{c|\beta} {\phi^c}_{,\alpha}
    \quad \mbox{and} \quad
    \ddot{\gamma}_{\alpha \beta}
    =
    2{Z^c}_{|\alpha}Z_{c|\beta}
      +2\left(R_{abcd} \circ \phi\right)Z^aZ^c {\phi^b}_{,\alpha}{\phi^d}_{,\beta},
  \end{equation}
where the bar $|$ indicates differentiation via $D[\phi^*TM,g]$ and Roman indices are raised and lowered via $g \circ \Phi$.

Next, using the product rule, we find
  \begin{equation}
  \label{normal}
    \begin{aligned}
      &(g \circ \Phi)\left(D_{\partial_t}N,N\right)=0, \\
      &(g \circ \Phi)\left(D_{\partial_t}N,\Phi_*V\right)|_{t=0}
      =
      -Z_{c|\alpha}\nu^c V^\alpha, \mbox{ and} \\
      &(g \circ \Phi)\left(D_{\partial_t}D_{\partial_t}N,N\right)|_{t=0}
      =
      -(g \circ \Phi)\left(D_{\partial_t}N,D_{\partial_t}N\right)|_{t=0}
        =-Z_{c|\rho}{Z_d}^{|\rho}\nu^c \nu^d,
    \end{aligned}
  \end{equation}
where Greek indices are raised and lowered via $\gamma$.
We also compute
  \begin{equation}
  \label{firstA}
    \begin{aligned}
      \partial_t (g \circ \Phi)\left(D_V \Phi_*W,N\right)
      &=
      (g \circ \Phi)\left((R \circ \Phi)\left(\Phi_*\partial_t,\Phi_*V\right)\Phi_*W,N\right) \\
        &\;\;\;\; +(g \circ \Phi)\left(D_V D_W \Phi_*\partial_t,N\right)
        +(g \circ \Phi)\left(D_V \Phi_*W, D_{\partial_t}N\right)
    \end{aligned}
  \end{equation}
and, at this point specializing to the flat case $R_{abcd}=0$,
  \begin{equation}
  \label{secondA}
      \partial_t^2(g \circ \Phi)\left(D_V \Phi_*W,N\right)
      =
      2(g \circ \Phi)\left(D_V D_W \Phi_*\partial_t,D_{\partial_t}N\right)
        +(g \circ \Phi)\left(D_V \Phi_*W,D_{\partial_t} D_{\partial_t}N\right).
  \end{equation}
Applying \eqref{normal} to \eqref{firstA} and \eqref{secondA},
we obtain
  \begin{equation}
  \label{Avar}
    \dot{A}_{\alpha \beta}=Z_{c|\alpha \beta}\nu^c
    \qquad \mbox{and} \qquad
    \ddot{A}_{\alpha \beta}
    =
    -2Z_{c|\alpha \beta}{Z_d}^{|\sigma}{\phi^c}_{,\sigma}\nu^d
      -A_{\alpha \beta}Z_{c|\rho}{Z_d}^{|\rho}\nu^c \nu^d.
  \end{equation}

From \eqref{XxifZ}
  \begin{equation}
    \begin{aligned}
      {Z^c}_{|\sigma}
      =
      &{X^\rho}_{:\sigma}{\phi^c}_{,\rho}+A_{\rho \sigma}X^\rho \nu^c
        +f_{,\sigma}\nu^c - f{A^\rho}_{\sigma}{\phi^c}_{,\rho}, \quad \mbox{and so} \\
      {Z^c}_{|\alpha\beta}
      =
      &{X^\rho}_{:\alpha\beta}{\phi^c}_{,\rho}
        +{X^\rho}_{:\alpha}A_{\rho \beta}\nu^c
        +A_{\rho \alpha:\beta}X^\rho\nu^c
        +A_{\rho\alpha}{X^\rho}_{:\beta}\nu^c
        -A_{\rho\alpha}{X^\rho}{A^\sigma}_\beta{\phi^c}_{,\sigma} \\
        &+f_{:\alpha\beta}\nu^c
        -f_{,\alpha}{A^\sigma}_\beta{\phi^c}_{,\sigma}
        -f_{,\beta}{A^\sigma}_\alpha{\phi^c}_{,\sigma}
        -f{A^\rho}_{\alpha:\beta}{\phi^c}_,{\rho}
        -f{A^\rho}_\alpha A_{\rho\beta}\nu^c.
    \end{aligned}
  \end{equation}
Specializing further to the case
when $\phi=\iota: S \to M$ is the inclusion map
of the standard unit sphere $S=\Sph^2$ in $M=\R^3$,
we have $A=-\gamma$, whence
  \begin{equation}
  \label{specialization}
    \begin{aligned}
      &{Z^c}_{|\sigma}
      =
      {X^\rho}_{:\sigma}{\phi^c}_{,\rho} + f{\phi^c}_{,\sigma}
        +\left(f_{,\sigma}-X_\sigma\right)\nu^c \quad \mbox{and} \\
      &{Z^c}_{|\alpha\beta}
      =
      {X^\rho}_{:\alpha\beta}{\phi^c}_{,\rho}
        -X_\alpha{\phi^c}_{,\beta}
        +f_{,\alpha}{\phi^c}_{,\beta}
        +f_{,\beta}{\phi^c}_{,\alpha}
        +\left(
          f_{:\alpha\beta}
          -f\gamma_{\alpha\beta}
          -X_{\alpha:\beta}
          -X_{\beta:\alpha}
        \right)\nu^c.
    \end{aligned}
  \end{equation}

Applying these last expressions in \eqref{gammavar} and \eqref{Avar},
we conclude
  \begin{equation}
    \begin{aligned}
      &\dot{\gamma}_{\alpha\beta}
      =
      2f\gamma_{\alpha\beta}+X_{\alpha:\beta}+X_{\beta:\alpha}, \\
      &\dot{A}_{\alpha\beta}
      =
      f_{:\alpha\beta}-f\gamma_{\alpha\beta}-X_{\alpha:\beta}-X_{\beta:\alpha}, \\
      &\ddot{\gamma}_{\alpha\beta}
      =
      2{X^\sigma}_{:\alpha}X_{\sigma:\beta}
      +2f^2\gamma_{\alpha\beta}
      +2f_{,\alpha}f_{,\beta}
      +2X_\alpha X_\beta \\
      &\;\;\;\;\;\;\;\;\;
      -2f_{,\alpha}X_\beta - 2f_{,\beta}X_\alpha
      +2fX_{\alpha:\beta}+2fX_{\beta:\alpha}, \mbox{ and} \\
      &\ddot{A}_{\alpha\beta}
      =
      \left(\abs{X}^2\right)_{:\alpha\beta}
      -2{X^\sigma}_{:\alpha}X_{\sigma:\beta}
      -2X_\alpha X_\beta
        -2f^{:\sigma}X_{\sigma:(\alpha\beta)}
        +6X_{(\alpha}f_{,\beta)}
        -4f_{,\alpha}f_{,\beta}  \\
        &\;\;\;\;\;\;\;\;\;
          +\abs{df}^2\gamma_{\alpha\beta}+\abs{X}^2\gamma_{\alpha\beta}
          -2(Xf)\gamma_{\alpha\beta},
    \end{aligned}
  \end{equation}
whence it follows,
setting
  \begin{equation}
    (KX)_{\alpha\beta}:=X_{\alpha:\beta}+X_{\beta:\alpha}=2X_{(\alpha:\beta)}
  \end{equation}
that
  \begin{equation}
  \label{2ndvarsummary}
    \begin{aligned}
      \dot{\gamma}^\sigma_{\;\;\sigma}
      &=
      4f + 2 \div X, \\
      \ddot{\gamma}^\sigma_{\;\;\sigma}
      &=
      2\abs{DX}^2+4f^2+2\abs{df}^2+2\abs{X}^2-4Xf+4f \div X, \\
      \dot{\gamma}_{\alpha\beta}\dot{\gamma}^{\alpha\beta}
      &=
      8f^2+8f \div X+\abs{KX}^2, \\
      \dot{\gamma}^{\alpha\beta}\dot{A}_{\alpha\beta}
      &=
      2f\Delta f-4f^2-6f \div X+2X_{\alpha:\beta}f^{\alpha:\beta}-\abs{KX}^2, \\
      \ddot{A}^\sigma_{\;\;\sigma}
      &=
      \Delta \abs{X}^2-2\abs{DX}^2-2\abs{df}^2+2Xf
      +2X^{\alpha:\beta}f_{:\alpha\beta}
      -2(X^{\alpha:\beta}f_{,\alpha})_{:\beta},  \\
      \dot{H}
      &=
      \Delta f + 2f, \quad \mbox{and} \\
      \ddot{H}
      &=
      (\Delta+2)\abs{X}^2-4f(\Delta+1)f
      -2Xf-2X_{\alpha:\beta}f^{:\alpha\beta}
      -2(X^{\alpha:\beta}f_{,\alpha})_{:\beta}.
    \end{aligned}
  \end{equation}

\subsection*{Total variation}
Now, given $f,v \in C^2(\partial M)$, $X \in C^2(TM)$, and $t \in \R$
we define $\rho_t: M \to \R$ and $\phi_t: M \to \R$ by
  \begin{equation}
  	\rho_t:=1+tv
  	\qquad \mbox{and} \qquad
  	\phi_t(\vec{x})
      :=
      \vec{x}
        +t\psi\left(\abs{\vec{x}}\right)
          \left[
            f\left(\frac{\vec{x}}{\abs{\vec{x}}}\right)+\vec{\xi}\left(\vec{x}\right)
          \right].
  \end{equation}
We have just computed, as summarized in \eqref{2ndvarsummary},
the first and second derivatives at $t=0$ of
$\left(\iota^*\phi_t^*\delta\right)^\sigma_{\;\;\sigma}$
and $\Hcal[\iota,\phi_t^*\delta]$.
Making use of these calculations
we next compute
  \begin{equation}
  \label{t2g}
    \begin{aligned}
   	  \left.\frac{d^2}{dt^2}\right|_{t=0} 
      \left(\iota^*\rho_t \phi_t^*\delta\right)^\sigma_{\;\;\sigma}
  	  =
  	  &8vf + 4 v\div X + 2\abs{DX}^2+4f^2 \\
  	    &+2\abs{df}^2+2\abs{X}^2-4Xf+4f \div X
  	\end{aligned}
  \end{equation}
and,
using also the identity
  \begin{equation}
  \label{t2H}
    \Hcal[\iota,\rho_t \phi_t^*\delta]
     =
     \left(\iota^*\rho_t^{-1/2}\right)
       \Hcal[\iota,\phi_t^*\delta]
       -\iota^*\rho_t^{-3/2}N[\phi_t^*\delta]\rho_t
    \end{equation}
 (see for example (B.27) in \cites{Wbmoss})
 along with \eqref{normal} and \eqref{specialization},
  \begin{equation}
    \begin{aligned}
      \left.\frac{d^2}{dt^2}\right|_{t=0} \Hcal[\iota,\rho_t\phi_t^*\delta]
      =
      &(\Delta+2)\abs{X}^2-4f(\Delta+1)f-v(\Delta+2)f-2X(f+v) \\
        &+3vv_{,r}-\frac{3}{2}v^2+2f^{:\alpha}v_{,\alpha}
        -2X_{\alpha:\beta}f^{:\alpha\beta}
        -2\left(X^{\alpha:\beta}f_{,\alpha}\right)_{:\beta}.
    \end{aligned}
  \end{equation}

\section{Divergence and Ricci deformation on Euclidean domains}
\label{RicciApp}
Assume
$n \in \Z \cap [3, \infty)$,
$k \in \Z \cap [0, \infty)$,
$\alpha \in (0,1)$,
and $\beta \in (0, n-2) \backslash \Z$.
Let $\Omega$ be a (not necessarily bounded)
connected, open subset of $\R^n$
with smoothly embedded, compact
(but not necessarily connected) boundary.
For any symmetric tensor $F_{ab}$ on $\Omega$
we set
$\widehat{F}_{ab} := F_{ab} - \frac{1}{2}{F^c}_c\delta_{ab}$
and
$\widecheck{F} := F_{ab} - \frac{1}{n-2}{F^c}_c\delta_{ab}$,
so that $\widecheck{\widehat{F}}=\widehat{\widecheck{F}}=F$,
and
we call $F$ \emph{self-equilibrated on $\Omega$}
if for each closed embedded hypersurface $S \subset \Omega$
and each Killing field $X$ of $\R^n$
we have $\int_S F_{ab}X^a N^b$=0,
where $N$ is a global unit normal for $S$.
If $F$ is $C^1$ in $\Omega$ and continuous up to the boundary,
$F$ is then self-equilibrated on $\Omega$
if and only if
${F_{ab}}^{|b}=0$ on $\Omega$
and $\int_S F_{ab}X^a N^b = 0$
for each Killing field $X$
and each component $S$ of $\partial \Omega$.

Writing $\dot{R}_{ab}[\delta]$
as above for the linearization at $\delta$
of the Ricci operator,
note that
$\widehat{F}_{ab}$
is self-equilibrated
on $\Omega$ for 
$F_{ab} := \dot{R}_{ab}[\delta]h$ 
with $h_{ab}$ any given $C^3$ symmetric tensor
on $\Omega$
(by virtue of the Bianchi identity applied to
$\dot{R}_{ab}[\delta]\widetilde{h}$ for any compactly supported
$C^3$ symmetric extension $\widetilde{h}_{ab}$ of $h$
to all $\R^n$).
In fact
self-equilibration of a symmetric tensor
(assuming appropriate decay if $\Omega$ is unbounded)
is also a sufficient condition
for it to be the linearized Einstein tensor
of some deformation of $\delta$ on $\Omega$,
as shown below.
(Sufficiency holds also in dimension 1, trivially,
but fails in dimension 2,
where every
$\dot{R}[\delta]_{ab}h$
is scalar
and yet on the unit disc,
for example,
there is a two-dimensional space of constant 
trace-free symmetric tensors, all of which
are self-equilibrated.)

We will solve the problem
$\dot{R}_{ab}[\delta]h = F_{ab}$
by first extending $F$ to a self-equilibrated tensor
on $\R^n$,
but the extension we specify loses regularity,
so we first solve the problem modulo error smoother
than $F$.
Throughout we will make use of the identities
\begin{align}
\label{KBminusDelta}
  \dot{R}_{ab}[\delta]h
  =\frac{1}{2}\widehat{h}_{c(a\;\;|b)}^{\;\;\;\;|c}
    -\frac{1}{2}{h_{ab|c}}^{|c}
  \quad \mbox{and} \\
\label{foliateddiv}
\begin{split}
  \div (T + V \otimes N + N \otimes V + \xi N \otimes N)
  =
  &\left[\div^\top T + (\overline{D}_N + S - H)V)\right] \\
   & + \left[\div^\top V + (N-H)\xi
            + A_{\alpha\beta}T^{\alpha\beta}\right]N,
\end{split}
\end{align}
where we assume (locally) a foliation by hypersurfaces
with $N^a$ unit and normal to each leaf,
$V^a$ and $T^{ab}=T^{ba}$ purely tangential to leaves,
$\div^\top$ the intrinsic (vector or symmetric tensor)
divergence on each leaf, $\overline{D}$ the ambient
connection, and $S$ and $H$ the shape operator
and mean curvature respectively of the leaves.

\begin{lemma}
[Right parametrix for linearized Ricci]
\label{smoothing}
There exists a constant
$C=C(\alpha, \beta, k, \Omega) > 0$
such that if
$F^{ab} \in C^{k+1, \alpha, 2+\beta}(T\Omega^{\odot 2})$
is self-equilibrated on $\Omega$,
then there exist
$h_{ab}$, $G_{ab}$ on $\Omega$ such that
$\dot{R}_{ab}[\delta]h =
  \widecheck{F}_{ab} + \widecheck{G}_{ab}$,
$\norm{h}_{k+3,\alpha,\beta}
  +\norm{G}_{k+2,\alpha,2+\beta}
  \leq C\norm{F}_{k+1,\alpha,2+\beta}$,
and $G$ is self-equilibrated on $\Omega$.
\end{lemma}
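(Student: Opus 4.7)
The plan is to convert the problem into a Poisson equation via trace reversal. In flat space, identity \eqref{KBminusDelta} reads
\begin{equation*}
  \dot{R}_{ab}[\delta]h
  = -\tfrac12\Delta h_{ab}
    + \tfrac12\bigl(\widehat{h}_{ca}{}^{,c}{}_{,b}
                   + \widehat{h}_{cb}{}^{,c}{}_{,a}\bigr);
\end{equation*}
taking $\widehat{(\cdot)}$ of both sides converts the principal term into $-\tfrac12\Delta\widehat{h}_{ab}$. Thus if $\widehat{h}$ solves $\Delta\widehat{h}_{ab} = -2\widetilde{F}_{ab}$ on $\R^n$ for some extension $\widetilde{F}$ of $F$, then on $\Omega$ the difference $\dot{R}_{ab}[\delta]h - \widecheck{F}_{ab}$ reduces to a pure symmetric derivative of $\widehat{h}_{ca}{}^{,c}$, which I will package as $\widecheck{G}_{ab}$ (setting $G := \widehat{\widecheck{G}}$).

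To build $h$, I would fix a bounded linear extension operator
$E : C^{k+1,\alpha,2+\beta}(T\Omega^{\odot 2}) \to C^{k+1,\alpha,2+\beta}(T(\R^n)^{\odot 2})$
of Stein or Seeley type, adapted to the asymptotic weight, and set $\widetilde{F} := EF$. Define $\widehat{h} := -2\,\Gamma \ast \widetilde{F}$ with $\Gamma$ the Newtonian potential for $\Delta$ on $\R^n$; the hypotheses $\beta \in (0, n-2)$ and $2+\beta \in (2, n)$ place us in the range where standard weighted elliptic theory for the Laplacian (Meyers, Bartnik) yields $\widehat{h} \in C^{k+3,\alpha,\beta}$ with norm controlled by $\norm{F}_{k+1,\alpha,2+\beta}$, and one recovers $h$ from $\widehat{h}$ by undoing the trace reversal. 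The residual on $\Omega$ is then
\begin{equation*}
  \widecheck{G}_{ab}
  = \tfrac12\bigl(\widehat{h}_{ca}{}^{,c}{}_{,b}
                 + \widehat{h}_{cb}{}^{,c}{}_{,a}\bigr),
  \qquad
  \widehat{h}_{ca}{}^{,c} = -2\,\Gamma \ast \widetilde{F}_{ca}{}^{,c}.
\end{equation*}
The crucial point is that the self-equilibration of $F$ gives $\widetilde{F}_{ca}{}^{,c} \equiv 0$ on $\Omega$, so $\widehat{h}_{ca}{}^{,c}$ is harmonic on $\Omega$.

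The self-equilibration of $G$ is automatic: the contracted second Bianchi identity applied to the family $\{\delta + th\}$ forces $(\dot{G}_{ab}[\delta]h)^{,b} = 0$, and the same identity applied to the deformations generated by Euclidean Killing fields controls the relevant boundary-moment integrals on closed hypersurfaces in $\Omega$; subtracting the self-equilibrated $F$ preserves both properties. The main obstacle I anticipate is the regularity gain for $G$: a naive count gives $G \in C^{k+1,\alpha,2+\beta}$, matching $F$, one order short of the lemma's claim. Closing this gap requires using the harmonicity of $\widehat{h}_{ca}{}^{,c}$ on $\Omega$ substantively --- for example by choosing a refined extension $\widetilde{F}$ whose divergence is supported outside a neighborhood of $\overline{\Omega}$ (possible when the boundary-moment obstructions vanish, which is the full content of self-equilibration) or by boundary Schauder estimates adapted to the exterior-domain setting. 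The weight condition $\beta \in (0, n-2)$ is chosen precisely so that the weighted elliptic estimates on $\R^n$ close.
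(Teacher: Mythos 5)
Your strategy shares the paper's starting point --- applying the identity \eqref{KBminusDelta} to express the residual as a symmetrized derivative of $\div\widehat{h}$, which is harmonic on $\Omega$ by self-equilibration of $F$ --- but you execute the Poisson step on all of $\R^n$ after a generic extension, whereas the paper solves a Dirichlet problem on $\Omega$ and then adds a correction. That difference is exactly where your argument breaks down.

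You have correctly identified the gap yourself: the residual $\widecheck{G}$ lands only in $C^{k+1,\alpha,2+\beta}$, one order short. Harmonicity of $\zeta_a := \widehat{h}_{ca}{}^{,c}$ on $\Omega$ does not close it, because harmonicity improves interior regularity only; the regularity of $\zeta$ up to $\partial\Omega$ is controlled by its boundary trace, which a generic Stein/Seeley extension fixes only to order $C^{k+2,\alpha}$. Boundary Schauder estimates cannot conjure more regularity than the boundary data itself possesses, so that avenue fails. Your other proposed fix --- choosing a refined extension $\widetilde{F}$ whose divergence vanishes near $\overline{\Omega}$, i.e.\ a self-equilibrated extension --- is precisely the content of Lemma~\ref{roughextension} and Corollary~\ref{smoothextension}: the former itself loses a derivative (mapping $C^{k+2,\alpha}$ to $C^{k+1,\alpha}$), and the latter depends on Proposition~\ref{prescribingricci}, which in turn uses Lemma~\ref{smoothing}. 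Using either here would be circular; in the paper's scheme Lemma~\ref{smoothing} is exactly the device that recovers the derivative Lemma~\ref{roughextension} loses.

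The idea your proposal is missing is the paper's harmonic correction: after solving the Poisson equation for $\eta$ on $\Omega$ with vanishing Dirichlet data, one \emph{adds} a harmonic symmetric tensor $\widecheck{\vartheta}$, with $\vartheta$ constructed via the harmonic extension operator $\mathcal{P}$, the normal decomposition \eqref{foliateddiv}, and two boundary ODEs along $N$, so that the \emph{boundary trace} of $\div\widehat{h} = \div\widehat{\eta} + \div\vartheta$ lies in $C^{k+3,\alpha}(\partial\Omega)$. Since both summands are harmonic, $\div\widehat{h}$ is harmonic with $C^{k+3,\alpha}$ boundary values, hence in $C^{k+3,\alpha}$ up to the boundary, and its symmetrized derivative gives $G \in C^{k+2,\alpha,2+\beta}$ as required. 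Your framework has no analogue of this step: the boundary trace of $\zeta$ is determined nonlocally by the restriction of $\widetilde{F}$ to $\R^n \backslash \Omega$, and tuning $\widetilde{F}$ there to make that trace $C^{k+3,\alpha}$ would amount to solving essentially the same boundary-correction problem the paper handles directly with $\vartheta$.
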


\begin{proof}
First let $\eta_{ab}$ be the solution
to the Poisson equation
${\eta_{ab|c}}^{|c}=-2\widecheck{F}_{ab}$
with $\eta|_{\partial \Omega}=0$.
In light of \eqref{KBminusDelta}
any symmetric tensor $\theta_{ab}$
satisfying ${\theta_{ab|c}}^{|c}=0$
and
${\theta_{ac}}^{|c}
  =\zeta_a:=-\widehat{\eta}_{ac}^{\;\;\;|c}$
yields an exact solution to
$\dot{R}_{ab}[\delta](\eta+\widecheck{\theta})
 =\widecheck{F}_{ab}$.
It is easy though to construct instead a
harmonic, symmetric tensor $\vartheta_{ab}$
satisfying the divergence condition to first order,
as follows.
Writing $\mathcal{P}$ for the operator
taking tensors on $\partial \Omega$
to their bounded (Cartesian componentwise)
harmonic extensions on $\Omega$
and $N$ for the inward unit normal on $\partial \Omega$,
let $X^a=V^a + \xi N^a$ (with $V \perp N$) and
$\widetilde{\xi}$
be the solutions on $\partial \Omega$ to
$(\overline{D}_N - 1)\mathcal{P}X
 = \zeta - \zeta_cN^cN$
and 
$(N-1)\mathcal{P}\widetilde{\xi}
 = \zeta^cN_c - \div^\top V - 2N\xi$.
Then,
referring to \eqref{foliateddiv}
(and using the regularity of $\partial \Omega$),
on $\partial \Omega$ the symmetric tensor
$(\mathcal{P}X) \otimes N + N \otimes (\mathcal{P}X)
 + \left(\mathcal{P}\widetilde{\xi}\right) N \otimes N$
has divergence $\zeta$ plus a $C^{k+3, \alpha}$ function,
and consequently
(since
$[\mathcal{P}(\cdot|_{\partial \Omega}),
  \cdot \otimes Z]$ has order $-1$
  for any smooth tensor $Z$)
so does
$\vartheta := \mathcal{P}(X \otimes N + N \otimes X
                           + \widetilde{\xi} N \otimes N)$.
Finally note that, by its definition above,
$\zeta$ is harmonic,
as of course is $\vartheta$,
so $h := \eta + \widecheck{\vartheta}$
satisfies $\div \widehat{h} \in C^{k+3,\alpha}(\Omega)$.
The proof is now completed by taking
$\widecheck{G}$ to be the first term in \eqref{KBminusDelta},
its self-equilibration following from that of $F$
and $(\dot{R}_{ab}[\delta]h)\widehat{\hphantom{a}}$;
the estimates are clear from the construction.
\end{proof}

\begin{lemma}
[Right inverse for divergence
with support contained in a hypercube]
\label{convexdivergence}
Suppose $d \in \Z \cap [1, \infty)$
and set $Q := [-1,1]^d \subset \R^d$.
There exists a constant $C=C(d, k, \alpha)>0$
such that the following hold.
\begin{enumerate}[(i)]
  \item If $f \in C^{k+1, \alpha}(\R^d)$
        has support
        contained in $Q$
        and $\int_{\R^d} f = 0$,
        then there exists
        $X^a \in C^{k+1, \alpha}(T\R^d)$
        with support contained in $Q$,
        ${X^c}_{|c}=f$,
        and $\norm{X}_{k+1,\alpha}
              \leq C\norm{f}_{k+1, \alpha}$.
  \item If $F^a \in C^{k+1, \alpha}(T\R^d)$
        has support contained in $Q$
        and $\int_{\R^d} F^cX_c = 0$
        for every Killing field $X^a$ on $\R^d$,
        then there exists
        $S^{ab} \in C^{k+1, \alpha}(T\R^d \odot T\R^d)$
        with support contained in $Q$,
        ${S_{ab}}^{|b} = F_a$,
        and $\norm{S}_{k+1, \alpha}
             \leq C\norm{F}_{k+1, \alpha}$.
  \item If $F^a \in C^{k+1, \alpha}(T\R^d)$
        has support contained in $Q$
        and $B$ is any open ball contained
        in the interior of $Q$,
        then there exists a 
        smooth vector field
        $\rho^a$ such that 
        $\rho$ has support contained in $B$
        and there exists
        $S^{ab} \in C^{k+1, \alpha}(T\R^d \odot T\R^d)$
        with support contained in $Q$,
        ${S_{ab}}^{|b} = F_a + \rho_a$,
        and $\norm{S}_{k+1, \alpha}
             \leq C\norm{F}_{k+1, \alpha}$.
\end{enumerate}
\end{lemma}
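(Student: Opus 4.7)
I would handle the three parts in order: (i) by induction on dimension, (ii)---the main obstacle---by applying (i) componentwise and repairing the failure of symmetry, and (iii) by a finite-dimensional moment correction that reduces to (ii).

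For part (i) the argument is by induction on $d$. The base case $d = 1$ is the antiderivative $X(x_1) := \int_{-1}^{x_1} f(t)\,dt$, which has support in $[-1,1]$ precisely because $\int f = 0$. For the inductive step, fix $\chi \in C_c^\infty((-1,1))$ with $\int \chi = 1$, set $\bar f(x') := \int_{\R} f(x_1, x')\,dx_1$ (which is supported in $[-1,1]^{d-1}$ with vanishing integral), and apply the inductive hypothesis on $\R^{d-1}$ to obtain $Y$ with $\div' Y = \bar f$ and $\supp Y \subset [-1,1]^{d-1}$. Then setting $X^j(x) := \chi(x_1) Y^j(x')$ for $j \geq 2$ and $X^1(x) := \int_{-\infty}^{x_1}\bigl[f(t,x') - \chi(t)\bar f(x')\bigr]\,dt$ produces a vector field with $\div X = f$ and support in $Q$, the essential point being that the $x_1$-integrand defining $X^1$ has vanishing line integral for every $x'$. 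Linearity and the inductive estimate yield the norm bound.

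Part (ii) is the main obstacle. My first move is to apply (i) componentwise: since each $F^a$ has zero integral by the translation Killing-moment hypothesis, (i) delivers a (generally non-symmetric) tensor $T^{ab}$ with $\supp T \subset Q$, $\partial_b T^{ab} = F^a$, and $\norm{T}_{k+1,\alpha} \leq C\norm{F}_{k+1,\alpha}$. The symmetrization $S_0^{ab} := (T^{ab} + T^{ba})/2$ is supported in $Q$, and a brief calculation shows the residual $G^a := F^a - \partial_b S_0^{ab}$ equals minus the divergence of the antisymmetric part of $T$; combining the Killing-moment hypothesis on $F$ with the easy fact that the divergence of any compactly supported symmetric tensor is automatically Killing-orthogonal, one verifies that $G$ itself has vanishing Killing moments. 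The hard part is then to actually solve the symmetric-divergence problem for $G$, not merely to observe that an iterative scheme preserves the moment conditions (since naive iteration need not contract). The cleanest route, which I would adopt, is to bypass iteration entirely by a Bogovskii-type integral representation adapted to symmetric tensors: for a star-shaped base point in $Q$ and a chosen bump $\omega$, one defines $S^{ab}(x) := \int F^c(y)\,K^{ab}_c(x,y)\,dy$ with a ray-integrated kernel $K$, symmetrized in $a,b$, tailored so that precisely when all Killing moments of $F$ vanish the resulting $S^{ab}$ is supported in $Q$ and satisfies $\partial_b S^{ab} = F^a$. The required H\"{o}lder norm bound would then follow from standard singular-integral estimates on $K$.

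Part (iii) follows from (ii) by a finite-dimensional moment correction. Fix a basis $X_{(1)}, \ldots, X_{(N)}$ of the $N := d(d+1)/2$-dimensional space of Killing fields on $\R^d$ and choose smooth vector fields $\rho^{(1)}, \ldots, \rho^{(N)}$ supported in $B$ whose moment matrix $M_{ij} := \int \rho^{(i)c}X_{(j)c}$ is invertible---possible since Killing fields are linearly independent on any open set. For arbitrary $F$ with $\supp F \subset Q$, set $\rho := -\sum_i c_i \rho^{(i)}$ with $(c_i)$ the unique coefficients making $F + \rho$ Killing-orthogonal; each $|c_i|$ is bounded by a constant times $\norm{F}_0 \leq \norm{F}_{k+1,\alpha}$, so $\norm{\rho}_{k+1,\alpha} \leq C\norm{F}_{k+1,\alpha}$. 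Applying (ii) to $F + \rho$ then yields the required $S^{ab}$ with the stated norm bound.
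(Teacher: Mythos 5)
Your part (i) matches the paper's inductive argument almost verbatim (foliate by $x_1$-hyperplanes, split off the fiber average, apply the $(d-1)$-dimensional hypothesis to the average, integrate the remainder along the normal), and your part (iii) is also essentially the paper's proof: cut off a basis of Killing fields inside $B$, solve the finite-dimensional moment system, then invoke (ii). Neither of those needs comment.

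Part (ii) is where there is a genuine gap. Your opening move — apply (i) componentwise, symmetrize, and look at the residual $G^a = -\tfrac{1}{2}\partial_b(T^{ab}-T^{ba})$ — is fine as far as it goes, and you correctly observe that $G$ again has vanishing Killing moments. But as you yourself note, this leaves you exactly where you started: $G$ is another vector field on $Q$ with vanishing Killing moments and no control that would make an iteration converge, so this reduction is circular. Your fallback, a ``Bogovskii-type integral representation adapted to symmetric tensors,'' is not an argument but a pointer toward one: the assertion that a ray-integrated kernel $K^{ab}_c(x,y)$ can be ``tailored so that precisely when all Killing moments of $F$ vanish the resulting $S$ is supported in $Q$ and satisfies $\partial_b S^{ab} = F^a$'' is the entire content of the lemma. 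For the scalar divergence on vectors the Bogovskii kernel is classical, but the analogous construction for symmetric tensors with compact support (and the mapping properties on H\"older spaces) is decidedly nonstandard — closest in spirit to Delay and Isett–Oh, which the author explicitly remarks are \emph{not} in a directly applicable form. The paper instead proves (ii) by the same dimensional induction as (i): first reduce to $F$ tangential to the slices, then decompose $F = F_\top + F_\circ + F_\rightarrow$ by projecting onto cutoff translations and rotations in the slice, solve for $F_\top$ by the inductive hypothesis, solve for $F_\circ$ by a normal integration exploiting that the cutoff rotation fields remain divergence-free, and solve for $F_\rightarrow$ by exploiting the \emph{mixed} Killing moments $\int F^c(x^d\partial_i - x^i\partial_d)_c = 0$, which is the one piece of the hypothesis your reduction does not visibly use. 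If you want to keep the kernel route you would need to actually build the kernel and verify support, divergence, and H\"older bounds; otherwise the inductive decomposition is the cleaner path, and the mixed-moment identities are what make the top component $F_\rightarrow$ solvable with compact support.
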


\begin{proof}
We prove (i) and (ii) by induction on $d$,
the case $d=1$ following immediately for both
by integration.
Assume $d \geq 2$.
For the inductive step we will apply
\eqref{foliateddiv} and the analogous
decomposition
$\div (V + \xi N) = \div^\top V + (N-H)\xi$,
in the same notation, for vector fields
to the foliation of $\R^n$
by hyperplanes orthogonal to the $x^d$-axis.
To simplify the notation slightly
we set $t:=x^d$
and $x:=\left(x^1, \cdots, x^{d-1}\right)$.
Fix a $\psi \in C_c^\infty(\R)$
with support contained in $[-1/2, 1/2]$
and $\int \psi = 1$.

For (i) define functions $f_\perp$ and $f_\top$
on $\R^d$ by
$f_\top(x, t) := \psi(t)\int_{\R} f(x,\tau) \, d\tau$
and $f_\perp := f - f_\top$,
so that both $f_\perp$ and $f_\top$
have support contained in $Q$,
$\int_{\R} f_\perp(x,t) \, dt = 0$
for each $x \in \R^{d-1}$
and $\int_{\R^{d-1}} f_\top(x,t) \, \abs{dx} = 0$
(since $\int_{\R^d} f = 0$) for each $t \in \R$.
Now we define $\xi$ on $\R^d$
by
$\xi(x,t) := \int_{-\infty}^t f_\perp(x, \tau) \, d\tau$,
and by the inductive hypothesis there exists
a vector field $V$ on $\R^d$ with values
orthogonal to $N=\partial_t=\partial_d$ and satisfying
$\div V(x,t) = f_\top(x,t)$ for each $t \in \R$.
Taking $X := V + \xi N$ concludes the proof of (i).

For (ii) we first reduce to the case that
$F$ is everywhere orthogonal to $N$, as follows.
Set $f := F^cN_c$, and define $V$ and $\xi$
exactly as in the previous paragraph,
with $\int_{\R^{d-1}} f_\top(x,t) \, \abs{dx} = 0$
now because $\int_{\R^d} f = \int_{\R^d} F^cN_c = 0$,
$N=\partial_t=\partial_d$ being a Killing field.
Then
$\div (V \otimes N + N \otimes V + \xi N \otimes N)$
is orthogonal to the Killing fields
and, 
referring to \eqref{foliateddiv},
equals $\overline{D}_N V + fN$,
where the first term is orthogonal to $N$.

Thus we now assume
$F$ is purely tangential to the leaves.
Fix a $\phi \in C_c^\infty([0,\infty))$
with support contained in $[0,1]$,
$\phi \geq 0$, and
$\int_{\R^{d-1}} \phi(\abs{x}) \, d\abs{x} = 1$.
For each $i \in \Z \cap [1,d-1]$
define the vector fields
${}_iK := \partial_i$
and
${}_i\widetilde{K} := \phi(\abs{x}) \; {}_iK$
and,
setting
$a := 
 1/\int_{\R^{d-1}} 2(x^1)^2\phi(\abs{x}) \, d\abs{x}$,
for each $i<j \in \Z \cap [1,d-1]$
define the vector fields
${}_{i,j}K := x^i\partial_j - x^j\partial_i$
and
${}_{i,j}\widetilde{K} :=
 a\phi(\abs{x}) \; {}_{i,j} K
$.
The ${}_iK$ and ${}_{i,j}K$ form a basis
for the Killing fields on $\R^{d-1}$,
and
the ${}_i\widetilde{K}$ and ${}_{i,j}\widetilde{K}$
restrict (acting by inner product and integration)
to the dual basis.
Note that each $K_{i,j}$ is divergence-free,
despite the cutoff.
 
Next set
\begin{equation}
\begin{aligned}
  &F_\rightarrow(x,t) :=
    \sum_{i=1}^{d-1}
    \left(\int_{\R^{d-1}} {}_iK^cF_c(x,t) \, d\abs{x}\right)
    \; {}_i\widetilde{K}(x,t), \\
  &F_\circ(x,t) :=
    \sum_{1 \leq i < j \leq d-1}
    \left(
      \int_{\R^{d-1}} {}_{i,j}K^cF_c(x,t) \, d\abs{x}
    \right)
    \; {}_{i,j}\widetilde{K}(x,t), \quad \mbox{and} \\
  &F_\top := F - F_\rightarrow - F_\circ,
\end{aligned}
\end{equation}
so that $F_\rightarrow$, $F_\circ$, and $F_\top$
all have support contained in $Q$,
$F_\top(\cdot, t)$ is orthogonal to the Killing
fields on $\R^{d-1}$ for each $t \in \R$,
and
$\int_{\R} F_\circ(x,t) \, dt
 = \int_{\R} F_\rightarrow(x,t) \, dt = 0$
for each $x \in \R^{d-1}$.
In particular, by the inductive hypothesis,
there exists
a symmetric tensor $T_{ab}$ with $T_{ab}N^b=0$
everywhere,
support contained in $Q$,
and $\div^\top T = F_\top$.
Furthermore
$V_\circ(x,t) := \int_{-\infty}^t F_\circ(x, \tau) \, dt$
has support contained in $Q$
and $\div^\top V_\circ(x,t) = 0$ for each $t \in \R$
(since each ${}_{i,j}\widetilde{K}$
has vanishing divergence, as noted above).
Referring again to \eqref{foliateddiv},
we therefore have
$\div (T+V_\circ \otimes N + N \otimes V_\circ)
 = F_\top + F_\circ$.

Finally we will use the orthogonality
of $F$
to
$t\partial_i - x^i\partial_t
 =x^d\partial_i -x^i\partial_d$
for each $i \in \Z \cap [1,d-1]$.
Setting
$c_i(t) := \int_{\R^{d-1}} {}_iK^cF_c(x,t) \, d\abs{x}$,
so that
$F_\rightarrow(x,t) = \sum_{i=1}^{d-1}
 c_i(t) \, {}_i\widetilde{K}(x,t)$,
we then have
$\int_{\R} tc_i(t) \, dt = 0$ for each $i$,
but orthogonality of $F$ to each ${}_iK$
gives $\int_{\R} c_i(t) \, dt = 0$,
so
$\int_{\R} \int_{-\infty}^t c_i(\tau) \, d\tau \, dt = 0$
too.
It follows not only that
$V_\rightarrow(x,t)
 :=\int_{-\infty}^t F_\rightarrow(x,\tau) \, d\tau
 $
 has support contained in $Q$
 but that
 $\xi_\rightarrow(x,t)
  :=
  -\int_{-\infty}^t
    \div^\top V_\rightarrow(x,\tau) \, d\tau
$
does too.
Since
$\div (V_\rightarrow \otimes N
       + N \otimes V_\rightarrow 
       + \xi_\rightarrow N \otimes N)
  = F_\rightarrow$,
the proof of (ii) is complete,
the estimates being clear from the construction.
For (iii) we construct $\rho$
by cutting off a basis for the Killing fields
on $\R^d$,
as done for $\R^{d-1}$ above,
but ``centered'' on $p$,
and choosing coefficients so that
$F^a - \rho^a$ is orthogonal to the Killing fields;
then we apply (ii).
\end{proof}

For alternative approaches
to constructing compactly supported symmetric tensors
of prescribed divergence 
see \cites{Delay, IO}
(though neither states results
in a form we can directly apply to our ends here.)

\begin{lemma}[Rough extension of self-equilibrated fields]
\label{roughextension}
There exists a constant $C=C(\alpha, k, \Omega) > 0$
such that every self-equilibrated
$F^{ab} \in C^{k+2,\alpha}(T\Omega^{\odot 2})$
admits a self-equilibrated
extension
$\overline{F}^{ab} \in C^{k+1, \alpha}(T\R^n \odot T\R^n)$
such that
$\overline{F}|_{\R^n \backslash \Omega}$
has compact support
and
$\norm{\overline{F}|_{\R^n \backslash \Omega}}_{k+1, \alpha}
  \leq C\norm{F|_{\partial \Omega_1}}_{k+2,\alpha}$,
where $\partial \Omega_1$ is the set of all points
in $\Omega$
at distance at most $1$ from $\partial \Omega$.
\end{lemma}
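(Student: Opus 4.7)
The plan is to build $\overline{F}$ in two stages: first a rough $C^{k+2,\alpha}$ extension $\widetilde{F}$ of $F$ that matches $F$ on $\Omega$ and is compactly supported in $\R^n\setminus\Omega$ but need not be divergence-free, and then a compactly supported correction $\Sigma$ vanishing on $\Omega$ so that $\overline{F}:=\widetilde{F}+\Sigma$ is divergence-free on all of $\R^n$. Any compactly supported symmetric $\overline{F}\in C^1(\R^n)$ with $\div\overline{F}=0$ is automatically self-equilibrated on $\R^n$: for any closed embedded hypersurface $S$ bounding a region $U$, integration by parts gives $\int_S\overline{F}^{ab}X_aN_b=\int_U\overline{F}^{ab}X_{a|b}=0$ by the symmetry of $\overline{F}$ and the antisymmetry of $X_{a|b}$ for Killing $X$.

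For the rough extension, choose a tubular neighborhood $T$ of $\partial\Omega$ thin enough that $T\cap\Omega\subset\Omega_1$ and each component of $T\setminus\overline{\Omega}$ lies in a single connected component of $\R^n\setminus\Omega$. A componentwise Seeley reflection in Cartesian coordinates extends $F|_{T\cap\Omega}$ to a symmetric tensor $F^{\mathrm{ext}}\in C^{k+2,\alpha}(T)$ satisfying $\norm{F^{\mathrm{ext}}}_{k+2,\alpha}\leq C\norm{F|_{\partial\Omega_1}}_{k+2,\alpha}$. Pick a smooth cutoff $\chi$ equal to $1$ on a smaller tubular neighborhood of $\partial\Omega$ and compactly supported inside $T$, and set
\begin{equation*}
  \widetilde{F}:=F\ \text{on}\ \Omega,\qquad \widetilde{F}:=\chi F^{\mathrm{ext}}\ \text{on}\ T\cap(\R^n\setminus\Omega),\qquad \widetilde{F}:=0\ \text{elsewhere.}
\end{equation*}
Because $\chi\equiv1$ near $\partial\Omega$ and $F^{\mathrm{ext}}$ matches $F$ to order $k+2$ across $\partial\Omega$, the tensor $\widetilde{F}$ is $C^{k+2,\alpha}$ on $\R^n$, its restriction to $\R^n\setminus\Omega$ is compactly supported in $T$, and the required norm bound holds. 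The vector field $G:=\div\widetilde{F}\in C^{k+1,\alpha}$ then vanishes on $\Omega$ and has compact support in $T\cap(\R^n\setminus\Omega)$.

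For the correction, work in each connected component $E$ of $\R^n\setminus\Omega$ separately. The divergence theorem, together with the symmetry of $\widetilde{F}$ and the antisymmetry of $X_{a|b}$, yields
\begin{equation*}
  \int_E G^aX_a
  =\int_{\partial E}\widetilde{F}^{ab}X_aN_b
  =-\sum_{S_i\subset\partial E}\int_{S_i}F^{ab}X_aN^{\Omega}_b
  =0
\end{equation*}
for every Killing field $X$ of $\R^n$, using that $\widetilde{F}=F$ on $\partial\Omega$, that the outward normal of $E$ at $\partial\Omega$ is the negative of the outward normal of $\Omega$, and the self-equilibration hypothesis on $F$. Cover $\supp(G|_E)$ by finitely many open cubes $Q_j\Subset E$, pick a partition of unity $\{\phi_j\}$ subordinate to $\{Q_j\}$ with $\sum_j\phi_j\equiv1$ on $\supp(G|_E)$, and apply Lemma~\ref{convexdivergence}(iii) in each $Q_j$ (after affine identification with $[-1,1]^n$) to obtain $S_j\in C^{k+1,\alpha}$ supported in $Q_j$ with $\div S_j=\phi_jG+\rho_j$, where $\rho_j$ is smooth and supported in a preassigned small ball $B_j\Subset Q_j$. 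The residual $\rho_E:=\sum_j\rho_j$ is then smooth, compactly supported in $E$, and still orthogonal to Killing fields (since $G|_E$ is and each $\div S_j$ is, automatically). A final application of Lemma~\ref{convexdivergence}(ii) in a cube $Q_E\Subset E$ containing $\bigcup_jB_j$ produces $S_\rho\in C^{k+1,\alpha}$ supported in $Q_E$ with $\div S_\rho=\rho_E$. Setting $\Sigma_E:=-\sum_jS_j+S_\rho$ and $\Sigma:=\sum_E\Sigma_E$, the tensor $\overline{F}:=\widetilde{F}+\Sigma$ has the required properties, with all norm bounds accumulating cleanly from Seeley's estimate and Lemma~\ref{convexdivergence}.

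The main obstacle is geometric: the final step needs a cube $Q_E\Subset E$ large enough to contain $\bigcup_jB_j$, which can fail when $E$ is a thin or non-convex region (say a narrow annular hole around a handle of $\Omega$). This is handled by taking $T$, the cubes $Q_j$, and the balls $B_j$ correspondingly small relative to the injectivity radius of $\partial\Omega$; when even this does not suffice, one iterates Lemma~\ref{convexdivergence}(ii) along chains of overlapping cubes inside $E$, pairing up and absorbing the bumps $\rho_j$ two at a time until only a single globally orthogonal residual in a single cube remains.
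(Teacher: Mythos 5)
Your overall plan matches the paper's: a Seeley-type extension $\widetilde{F}$, a decomposition of the spurious divergence $G=\div\widetilde{F}$ into compactly supported pieces, absorption of each piece by Lemma~\ref{convexdivergence}(iii), and a final application of (ii) to the consolidated residual. The one genuine variation you make is a simplification: you cover the exterior support of $G$ by genuine Euclidean cubes $Q_j \Subset E$, whereas the paper works with curved coordinate tubes $\overline{Q_i^\epsilon}$ straddling $\partial\Omega$ and argues, by a perturbation of the flat metric, that Lemma~\ref{convexdivergence}(iii) continues to hold on such images. Your version sidesteps that perturbative step entirely and is, if anything, cleaner; your orthogonality computation for $\rho_E$ is also correct.

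The last paragraph, however, contains a real error. You cannot ``iterate Lemma~\ref{convexdivergence}(ii) along chains of overlapping cubes, pairing up and absorbing the bumps $\rho_j$ two at a time'': part (ii) requires its input to be orthogonal to all Killing fields, and a single $\rho_j$, or a pair of them, generally is not. The mechanism that actually works --- and the one the paper uses, and the one your phrase ``chains of overlapping cubes'' is evidently reaching for --- is to apply (iii) repeatedly to \emph{move} a bump: given $\rho$ supported in a ball $B$ in the interior of a cube $Q$ and an overlapping cube $Q'\Subset E$, applying (iii) to $\rho$ on $Q'$ with a target ball $B'$ in the interior of $Q'$ produces $S'$ supported in $Q'$ with $\div S' = \rho + \rho'$, so subtracting $S'$ replaces $\rho$ by $-\rho'$ at the new location at the cost of a controlled symmetric correction supported in $Q'$. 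Chaining such moves through the connected open set $E$ transports every $\rho_j$ into one fixed cube $Q_E\Subset E$; only then is the consolidated residual globally orthogonal to Killing fields (by your earlier computation, which is preserved under moving since $\div S'$ integrates to zero against Killing fields) and thus amenable to (ii). Also, your first fallback --- shrinking $T$, the $Q_j$, and the $B_j$ --- does not help: making the objects smaller does nothing to make a single cube contain all of them when $E$ is nonconvex. The chain-of-cubes transport via (iii) is what is genuinely needed, not a size adjustment and not an application of (ii) to individual pieces.
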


\begin{proof}
For any $G \subset \R^n$ write $\overline{G}$
for its closure
and for any $E \subset \partial \Omega$
and $\epsilon>0$
write $E^\epsilon$ for the set of all points
in $\R^n \backslash \partial \Omega$
having distance less than $\epsilon$ from $E$.
Since $\partial \Omega$
is $C^\infty$, compact, and embedded
there exist
$\epsilon>0$
and open sets
$Q_1, \cdots Q_N$
in $\partial \Omega$ (with its induced metric)
covering $\partial \Omega$
such that for each $Q_i$
we have that
$\overline{Q_i^\epsilon} \cap \partial \Omega
 =\overline{Q_i}$
and moreover that
item (iii) of Lemma \ref{convexdivergence}
holds with the hypercube $Q$ replaced by
$\overline{Q_i^\epsilon}$.
(To verify that this last part of the claim
 can be achieved
 note that at each $p \in \partial \Omega$
 we can find a local coordinate system
 $\Phi: \mathcal{U} \ni p \to \R^n$
 and an open neighborhood $Q_p \subset \partial \Omega$
 of $p$ such that
 $\Phi(\overline{Q_p^\epsilon})$
 is a hypercube of edge length $\epsilon$,
and for any $\ell>0$,
after rescaling so that $\Phi(Q_p)$ has unit size,
we can make 
$\Phi^*\delta$ arbitrarily $C^\ell$-close to $\delta$
on $\mathcal{U}$
by taking $\epsilon$ small.
For $\epsilon$ small enough \ref{convexdivergence}.(iii)
can then be applied iteratively
to produce an exact solution.)

Now we extend $F$ (continuously in $F$)
to some
$\widetilde{F}^{ab} \in C^{k+2,\alpha}(T\R^n \odot T\R^n)$
whose restriction to $\R^n \backslash \Omega$
has support contained in $\partial \Omega^\epsilon$.
Using a partition of unity on $\partial \Omega$
subordinate to $\{Q_i\}_{i=1}^N$
and extended constantly in the normal direction,
we obtain the decomposition
$\widetilde{F}_{ab}^{\;\;|b}=\sum_{i=1}^N {}_iF_a$,
where each ${}_iF_a$ has support contained
in $\overline{Q_i^\epsilon}$.
Then, by \ref{convexdivergence}.(iii)
with $\overline{Q_i^\epsilon}$ in place of $Q$,
for each ${}_iF_a$
there exist
a smooth vector field ${}_i\rho^a$
with support contained in the interior $Q_i^\epsilon$
and ${}_iS_{ab} \in C^{k+1,\alpha}(T\R^n \odot T\R^n)$
with support contained in $\overline{Q_i^\epsilon}$
and satisfying
${}_i{S_{ab}}^{|b}={}_iF_a - {}_i\rho_a$.
Note that
$\int_R^n {}_i\rho^cX_c = \int_\R^n {}_iF^cX_c$
for every Killing field $X$ on $\R^n$.
Summing,
we obtain $S_{ab}$ such that
$\widetilde{F} - S \in C^{k+1,\alpha}(T\R^n \odot T\R^n)$
has divergence $\sum_{i=1}^N {}_i\rho$
and its restriction to $\R^n \backslash \Omega$
has support contained in $\partial \Omega^\epsilon$.

By applying Lemma \ref{convexdivergence}
repeatedly we can ``move'' each ${}_i\rho$
freely within the component of $\partial \Omega^\epsilon$
that contains it
by adding to $\widetilde{F} - S$
$C^{k+2}$ symmetric tensor fields with support
contained in $\partial \Omega^\epsilon$,
so that the resulting sum has divergence
$\sum_{i=1}^N {}_i\widetilde{\rho}$,
where each ${}_i\widetilde{\rho}$
is smooth and can be chosen to have support
contained in any open ball in any hypercube
contained in $\partial \Omega^\epsilon$
and containing the support of ${}_i\rho$
and with ${}_i\widetilde{\rho}-{}_i\rho$
orthogonal to the Killing fields.
In this way we can consolidate all the ${}_i\rho$
(iteratively replacing each by
${}_i\widetilde{\rho}$ as above)
for a given component of $\partial \Omega$
into a single hypercube
contained in $\partial \Omega^\epsilon$,
but by the construction of the ${}_i\rho$
and the self-equilibration assumption
on the original $F$,
the sum of the ${}_i\rho$ contained
in a given component of $\partial \Omega^\epsilon$
is orthogonal to all Killing fields.
Therefore we can apply Lemma \ref{convexdivergence}.(ii)
(once for each component of $\partial \Omega$)
to eliminate all the ${}_i\rho$ by adding further
$C^{k+2}$ symmetric tensor
fields with support contained
in $\partial \Omega^\epsilon$.
Writing $\widetilde{S}_{ab}$
for the sum of all symmetric tensors
introduced in this paragraph to consolidate
and eliminate the ${}_i\rho$,
we conclude by taking
$\overline{F}_{ab}
 :=\widetilde{F}_{ab}-S_{ab}+\widetilde{S}_{ab}$.
\end{proof}

\begin{prop}[Right inverse for linearized Ricci]
\label{prescribingricci}
There exists a constant
$C=C(\alpha, \beta, k, \Omega) > 0$
such that if
$F^{ab} \in C^{k+1, \alpha, 2+\beta}(T\Omega^{\odot 2})$
is self-equilibrated on $\Omega$,
then there exists a
symmetric tensor
$h_{ab}$ on $\Omega$ such that
$\dot{R}_{ab}[\delta]h = \widecheck{F}_{ab}$,
$\widehat{h}$ is self-equilibrated,
and
$\norm{h}_{k+3,\alpha,\beta}
  \leq C\norm{F}_{k+1,\alpha,2+\beta}$.
\end{prop}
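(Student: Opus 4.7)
The plan is to combine the parametrix from Lemma \ref{smoothing} with a global correction on $\R^n$ built using the extension in Lemma \ref{roughextension} and the Newtonian potential in the ``harmonic'' (divergence-free trace-reverse) gauge, finishing with a gauge adjustment to enforce self-equilibration of $h$.

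First I would apply Lemma \ref{smoothing} to the given self-equilibrated $F$, producing $h_0$ and a self-equilibrated $G_0 \in C^{k+2,\alpha,2+\beta}$ with
\begin{equation*}
  \dot{R}_{ab}[\delta]h_0 = \widecheck{F}_{ab} + \widecheck{G_0}_{ab},
\end{equation*}
so that the task reduces to solving the correction problem $\dot{R}_{ab}[\delta]h_1 = -\widecheck{G_0}_{ab}$ on $\Omega$. The gain of one derivative in $G_0$ is exactly what is needed to apply Lemma \ref{roughextension}, yielding a self-equilibrated extension $\overline{G_0} \in C^{k+1,\alpha}(T\R^n \odot T\R^n)$ that coincides with $G_0$ on $\Omega$ and has compact support outside $\Omega$, hence still lies in the weighted space $C^{k+1,\alpha,2+\beta}(\R^n)$ with norm controlled by $\norm{G_0}_{k+2,\alpha,2+\beta}$.

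To solve on $\R^n$ I would exploit the harmonic gauge: set $h_1 := 2\Delta^{-1}\widecheck{\overline{G_0}}$, where $\Delta^{-1}$ denotes componentwise convolution (in Cartesian coordinates) with the Newtonian kernel. Because the trace-reverse and divergence both commute with $\Delta^{-1}$ on $\R^n$, and because $\widehat{\overline{G_0}}{}_{ab}{}^{|b}=0$ by self-equilibration of $\overline{G_0}$, the tensor $\widehat{h_1} = 2\Delta^{-1}\overline{G_0}$ is automatically divergence-free on $\R^n$. The identity \eqref{KBminusDelta} then collapses to $\dot{R}_{ab}[\delta]h_1 = -\tfrac{1}{2}\Delta h_1 = -\widecheck{\overline{G_0}}$, and restricting to $\Omega$ gives $\dot{R}_{ab}[\delta]h_1 = -\widecheck{G_0}$ there. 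Setting $h := h_0 + h_1|_\Omega$ then gives $\dot{R}_{ab}[\delta]h = \widecheck{F}$. Standard weighted Schauder theory for the Poisson equation on $\R^n$ yields $\norm{h_1}_{k+3,\alpha,\beta} \leq C\norm{\overline{G_0}}_{k+1,\alpha,2+\beta}$, the hypothesis $\beta \in (0,n-2)$ being precisely what guarantees that the Newtonian potential of an object decaying at rate $2+\beta$ decays at rate exactly $\beta$. Chaining this with the estimates from Lemmas \ref{smoothing} and \ref{roughextension} produces the asserted bound on $\norm{h}_{k+3,\alpha,\beta}$. To enforce self-equilibration as a final step, I would absorb a correction of the form $L_X\delta$ (which lies in the kernel of $\dot{R}[\delta]$ and so does not affect the prescribed Ricci equation), with $X$ chosen by solving an auxiliary elliptic problem to kill $h_{ab}{}^{|b}$ together with the Killing-field boundary integrals while preserving the relevant weighted norms.

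I expect the main obstacle to be the careful bookkeeping of derivative counts and decay exponents through the parametrix--extension--potential chain: the extension loses the derivative that the parametrix gained, and one must verify that the Newtonian potential then restores both derivatives to land exactly in $C^{k+3,\alpha,\beta}$. A secondary subtlety is confirming that the closing gauge adjustment can be performed without degrading the norm bound coming from the first four steps.
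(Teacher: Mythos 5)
Your proposal follows essentially the same route as the paper: Lemma \ref{smoothing} to trade $F$ for a one-derivative-smoother self-equilibrated $G_0$, Lemma \ref{roughextension} to extend $G_0$ to a compactly supported self-equilibrated tensor on $\R^n$, then a componentwise Newtonian potential, with self-equilibration of the extension implying $\div\widehat{h_1}=0$ by harmonicity plus decay, so that \eqref{KBminusDelta} collapses to a flat Poisson equation and restriction to $\Omega$ finishes; the paper simply packages your first step as a terse ``may assume $F \in C^{k+2,\alpha,2+\beta}$'' reduction. The one place you go beyond the paper is the closing gauge adjustment by a Lie derivative $L_X\delta$ to force self-equilibration of $h=h_0+h_1|_\Omega$: the paper does not perform such a step, relying only on the divergence-freeness established for $\widehat{h_1}$ and leaving the contribution of the parametrix piece $h_0$ unaddressed. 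Your caution here is well placed, since Lemma \ref{smoothing} does not assert $\div\widehat{h_0}=0$, but as sketched the step is under-specified: on $\Omega$ (rather than $\R^n$) the auxiliary elliptic problem for $X$ needs boundary conditions and a growth rate consistent with the target estimate on $\norm{h}_{k+3,\alpha,\beta}$, and the ready-made device for exactly this is Proposition \ref{selfequilibration}, which you should invoke rather than improvising. Be also precise about which condition you are enforcing, since $\div h = 0$ and $\div \widehat{h}=0$ differ unless $\tr h$ is constant; the latter is what Proposition \ref{inhom}(ii) actually uses downstream.
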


\begin{proof}
By Lemma \ref{smoothing}
we  may assume that
$F \in C^{k+2, \alpha, 2+\beta}(T\Omega^{\odot 2})$.
By Lemma \ref{roughextension}
$F$ has a self-equilibrated extension
$\overline{F} \in
  C^{k+1, \alpha, 2+\beta}(T\R^n \odot T\R^n)$.
Let $h^{ab}$ be the unique solution in 
$C^{k+3, \alpha, \beta}(T\R^n \odot T\R^n)$
to ${h_{ab|c}}^{|c}=-2\widecheck{\overline{F}}_{ab}$
on $\R^n$.
Then $\widehat{h}_{ab\;\;\;|c}^{\;\;\;|bc}=0$
on $\R^n$,
so, by uniqueness of solutions in $C^{k+2, \alpha, \beta}$
to this Poisson equation,
$\widehat{h}$ is self-equilibrated on $\R^n$,
and therefore, in light of \eqref{KBminusDelta},
satisfies
$\dot{R}_{ab}[\delta]h=\widecheck{\overline{F}}_{ab}$
on $\R^n$. Restriction to $\Omega$ completes the proof.
\end{proof}

Conversely, we can use Proposition \ref{prescribingricci}
to solve the extension problem.
\begin{cor}[Smooth extension of self-equilibrated fields]
\label{smoothextension}
There exists a constant $C=C(\alpha, k, \Omega) > 0$
so that every self-equilibrated
$F^{ab} \in C^{k+1,\alpha, 2+\beta}(T\Omega^{\odot 2})$
admits a self-equilibrated
extension
$\overline{F}^{ab} \in C^{k+1, \alpha, 2+\beta}
  (T\R^n \odot T\R^n)$
such that
$\overline{F}|_{\R^n \backslash \Omega}$
has compact support
and
$\norm{\overline{F}}_{k+1,\alpha,2+\beta}
  \leq C\norm{F}_{k+1, \alpha, 2+\beta}$.
\end{cor}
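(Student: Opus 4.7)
As hinted just before the statement, the strategy is to invoke Proposition~\ref{prescribingricci} in order to trade the constrained extension problem (extending a self-equilibrated tensor while preserving self-equilibration) for an unconstrained one (extending a plain symmetric tensor) at the price of two derivatives in regularity.

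First I would apply Proposition~\ref{prescribingricci} to the given $F$ to obtain a self-equilibrated symmetric tensor $h \in C^{k+3,\alpha,\beta}(T\Omega^{\odot 2})$ satisfying $\dot{R}_{ab}[\delta]h = \widecheck{F}_{ab}$ on $\Omega$ together with the estimate $\norm{h}_{k+3,\alpha,\beta} \leq C\norm{F}_{k+1,\alpha,2+\beta}$. Since $\widehat{\widecheck{F}} = F$ by the identities recorded at the beginning of the appendix, this exhibits $F$ on $\Omega$ as the Einstein-like projection $\widehat{\dot{R}[\delta]h}$ of a second-order linear differential operator applied to a tensor $h$ that, aside from regularity and decay, carries no algebraic or boundary constraint and is hence easy to extend.

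Next I would extend $h$ to a $\widetilde{h} \in C^{k+3,\alpha,\beta}(T\R^n \odot T\R^n)$ with $\widetilde{h} = h$ on $\Omega$ and $\widetilde{h}|_{\R^n \setminus \Omega}$ compactly supported. Because $\partial \Omega$ is smoothly embedded and compact, a standard Seeley-type construction produces a tensor $h^\sharp$ on a bounded tubular neighborhood $T$ of $\partial \Omega$ that matches $h$ smoothly across $\partial \Omega$ with a uniform bound $\norm{h^\sharp}_{k+3,\alpha} \leq C\norm{h}_{k+3,\alpha,\beta}$. Choosing a smooth cutoff $\chi$ equal to $1$ on an inner collar of $\partial \Omega$ inside $T$ and vanishing near the outer boundary of $T$, set
\begin{equation}
  \widetilde{h}
  :=
  \begin{cases}
    h & \text{on } \Omega, \\
    \chi h^\sharp & \text{on } T \setminus \Omega, \\
    0 & \text{on } \R^n \setminus (\Omega \cup T).
  \end{cases}
\end{equation}
The result is $C^{k+3,\alpha}$ across $\partial \Omega$ (because $\chi \equiv 1$ there and $h^\sharp$ Seeley-matches $h$), smooth across the outer boundary of $T$ (because $\chi$ vanishes smoothly), and matches $h$ on $\Omega$, so it inherits the weighted decay of $h$; unboundedness of $\Omega$ is irrelevant, as we cut off only outside $\Omega$.

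Finally I would set $\overline{F} := \widehat{\dot{R}_{ab}[\delta]\widetilde{h}}$ and verify the four required properties. The equality $\overline{F}|_\Omega = F$ is immediate from Step~1, the weighted estimate $\norm{\overline{F}}_{k+1,\alpha,2+\beta} \leq C\norm{F}_{k+1,\alpha,2+\beta}$ follows because $\dot{R}[\delta]$ is a second-order constant-coefficient differential operator, and compactness of $\supp(\overline{F}|_{\R^n \setminus \Omega})$ is inherited from $\widetilde{h}$. Self-equilibration on $\R^n$ is automatic: the linearized twice-contracted Bianchi identity applied to $\widetilde{h}$ defined on all of $\R^n$ gives ${\overline{F}_{ab}}^{|b} = 0$ globally, and on the boundaryless manifold $\R^n$ this is equivalent (via the divergence theorem applied to the interior of any closed hypersurface, together with symmetry of $\overline{F}$ and the Killing equation $X_{(a|b)} = 0$) to vanishing of $\int_S \overline{F}_{ab}X^aN^b$. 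The only delicate point is Step~2: the patched $\widetilde{h}$ must equal $h$ \emph{exactly} on $\Omega$, for otherwise $\overline{F}|_\Omega$ would differ from $F$ by a lower-order error incompatible with the equality required in the statement.
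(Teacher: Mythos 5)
Your proposal is correct and follows essentially the same route as the paper: apply Proposition~\ref{prescribingricci} to exhibit $\widecheck{F}$ as $\dot{R}_{ab}[\delta]h$ for a potential $h \in C^{k+3,\alpha,\beta}(T\Omega^{\odot 2})$, extend $h$ freely to $\overline{h}$ on $\R^n$, and take $\overline{F}$ to be the hat of $\dot{R}_{ab}[\delta]\overline{h}$, with self-equilibration on $\R^n$ guaranteed by the linearized Bianchi identity. The paper's proof is terse and merely asserts the extension of $h$; your proposal fills in that step explicitly (via a Seeley-type construction with a cutoff on a tubular neighborhood of $\partial\Omega$), but the underlying idea is identical.
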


\begin{proof}
By Proposition \ref{prescribingricci}
$\widecheck{F}_{ab}$ admits a ``potential''
$h_{ab} \in C^{k+3, \alpha, \beta}(T\Omega^{\odot 2})$
such that $\dot{R}_{ab}[\delta]h = \widecheck{F}$.
Extend $h_{ab}$ to
$\overline{h}_{ab}
  \in C^{k+3, \alpha, \beta}(T\R^n \odot T\R^n)$
and take
$\overline{F}
 =
 (\dot{R}_{ab}[\delta]\widehat{h})\widehat{\hphantom{a}}$.
\end{proof}

Finally we observe that self-equilibration
can be achieved at the cost of a Lie derivative
of the metric.

\begin{prop}[Self-equilibration modulo Lie derivatives]
\label{selfequilibration}
Assume that $\partial \Omega$ is connected
with outward unit normal $N$
and let $\beta' \in \R \backslash \Z$.
There exists $C=C(\Omega, k, \alpha, \beta')>0$
such that
for any
$F_{ab} \in C^{k+1,\alpha,1+\beta'}(T\Omega^{\odot 2})$
there is a vector field $X^a$ on $\Omega$
such that
$\left(
  F_{ab} + X_{a|b} + X_{b|a}
  \right)\widehat{\vphantom{a}}$
is self-equilibrated
and
$\norm{X}_{k+2,\alpha,\beta'}
 \leq
C\norm{\div \widehat{F}}_{k,\alpha,2+\beta'}
 + C \sup \{\abs{\int_{\partial \Omega} F_{ab}Y^aN^b}
     \; : \;
   Y \mbox{ is Killing on } \R^n \}.
$
\end{prop}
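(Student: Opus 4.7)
The plan is to reduce self-equilibration of $\widehat{F + L_X\delta}$ to a scalar Poisson equation on $\Omega$ combined with finitely many boundary moment corrections. The key algebraic fact, immediate from $(L_X\delta)_{ab}=X_{a|b}+X_{b|a}$ and the flatness of $\delta$, is
\[
  \widehat{L_X\delta}_{ab}{}^{|b} = \Delta X_a,
\]
with $\Delta$ the componentwise Cartesian Laplacian. Consequently self-equilibration of $\widehat{F + L_X\delta}$ on $\Omega$ is equivalent to (i) $\Delta X = -\div \widehat{F}$ on $\Omega$, together with (ii) the vanishing of the $D := n(n+1)/2$ scalar boundary moments $\int_{\partial\Omega}(F+L_X\delta)_{ab}K_j^a N^b$ against a basis $K_1,\dots,K_D$ of Killing fields on $\R^n$.

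For (i), I would first produce a particular solution $X_0 \in C^{k+2,\alpha,\beta'}(T\Omega)$ using standard weighted Schauder theory for the componentwise Laplacian on $\Omega$ with, say, homogeneous Dirichlet data on $\partial\Omega$; the assumption $\beta'\in\R\setminus\Z$ avoids the indicial roots and yields $\norm{X_0}_{k+2,\alpha,\beta'} \leq C\norm{\div\widehat F}_{k,\alpha,2+\beta'}$. The residual freedom preserving (i) is precisely the addition of a harmonic correction $Z \in C^{k+2,\alpha,\beta'}(T\Omega)$, which I use to enforce (ii).

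For (ii), I would construct $D$ harmonic vector fields $Z_1,\dots,Z_D \in C^{k+2,\alpha,\beta'}(T\Omega)$ for which the $D \times D$ matrix $M_{ij} := \int_{\partial\Omega}(L_{Z_j}\delta)_{ab} K_i^a N^b$ is invertible---for instance, by taking each $Z_j$ to be the decaying componentwise harmonic extension of a suitable trace on $\partial\Omega$ built from $K_j$ and $N$. Setting $c_i := \int_{\partial\Omega}(F + L_{X_0}\delta)_{ab} K_i^a N^b$, $\lambda := -M^{-1}c$, and $X := X_0 + \sum_j \lambda_j Z_j$ then produces a solution to both (i) and (ii). The stated norm estimate follows by combining the Schauder bound on $X_0$ with $|\lambda| \leq C|c|$ and $|c_i| \leq C\norm{X_0}_{k+2,\alpha,\beta'} + C\,\abs{\int_{\partial\Omega} F_{ab}K_i^a N^b}$.

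The main obstacle is establishing the invertibility of $M$, which is where the connectedness of $\partial \Omega$ enters. The natural strategy is a duality argument: if $M$ fails to be invertible, some nontrivial Killing combination $Y = \sum_i \mu_i K_i$ would pair to zero with every admissible harmonic $Z$ via the Lie-derivative moment. Integration by parts on $\Omega$ (using the Killing equation to kill the symmetric-times-antisymmetric contraction together with $\Delta Z = 0$) converts the bulk contribution to $[(\div Z)Y^a]_{|a}$; because rotational Killing fields grow linearly against the merely power-law decay $|x|^{-\beta'}$ of $Z$, this requires a cutoff at infinity and yields both a $\partial \Omega$ contribution in $\div Z$ and $Y\cdot N$ and a nontrivial asymptotic contribution capturing the leading multipole of $Z$. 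Showing that these two together distinguish the full space of Killing fields---and in particular that $Y \cdot N \equiv 0$ on a \emph{connected} $\partial \Omega$ combined with vanishing of the relevant asymptotic coefficients forces $Y \equiv 0$---is the delicate step, and is the analog in this setting of the standard unique-continuation / density argument for Dirichlet-to-Neumann maps.
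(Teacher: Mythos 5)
Your overall scaffolding—solve the componentwise Poisson equation $\Delta X_0 = -\div\widehat{F}$ to kill the bulk obstruction, then add harmonic vector fields to kill the finitely many Killing-moment obstructions on $\partial\Omega$—matches the paper's. But you have not actually proved the key step, the invertibility of your matrix $M$, and the strategy you sketch for it is both vague and considerably more complicated than what the paper does. You acknowledge this is ``the delicate step,'' which is precisely where a proof must land. The cutoff-at-infinity / unique-continuation outline you propose has several loose ends: it presumes decay $|x|^{-\beta'}$ of the corrections (which would in general be incompatible with a harmonic extension of Killing boundary data in the relevant weight range), it requires you to characterize and then simultaneously annihilate an asymptotic multipole contribution as well as a boundary one, and it invokes a unique-continuation ingredient you do not supply. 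Also note a small algebraic slip: the self-equilibration condition is on the \emph{trace-adjusted} tensor $\widehat{F + KX}$, so the moments you must kill are $\int_{\partial\Omega}\widehat{(L_{Z_j}\delta)}_{ab}K_i^aN^b$, which differs from your $M_{ij}$ by a trace term $-\tfrac{1}{2}\int_{\partial\Omega}(\div Z_j)\,K_i\cdot N$ that does not vanish when $K_i$ is not tangent to $\partial\Omega$.

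The paper avoids all of this by a much cleaner device. It first observes that if $\Omega$ is bounded or $\beta' > n-1$, no correction at all is needed: the divergence theorem applied to the Poisson solution already gives vanishing of the Killing moments (the boundary term at infinity vanishes by decay, and the bulk term vanishes because a trace-free symmetric tensor pairs to zero with the antisymmetric gradient of a Killing field). In the remaining range, it takes for each Killing field $Y$ on $\R^n$ the \emph{bounded} harmonic extension $\overline{Y}$ of $Y|_{\partial\Omega}$ and introduces the pairing
\begin{equation*}
  \mathcal{A}(Y,Z)
  :=
  \int_{\partial\Omega}\widehat{(K\overline{Y})}_{ab}Z^aN^b .
\end{equation*}
Using $KY=0$ for Killing $Y$, this is shown to equal $\int_\Omega \overline{Y}_{a|b}\overline{Z}^{a|b} + \int_{\R^n\setminus\Omega} Y_{a|b}Z^{a|b}$, a sum of two Dirichlet-type inner products (inside and outside $\Omega$), which is manifestly symmetric and nonnegative. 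Positive definiteness—the invertibility you need—then follows directly (connectedness of $\partial\Omega$ guarantees one copy of the Killing algebra of constraints rather than one per boundary component), with no cutoffs, no asymptotic matching, and no separate unique-continuation lemma. If you want to salvage your outline, the essential missing idea is this explicit choice of corrections and the identification of $\mathcal{A}$ as a sum of interior and exterior Dirichlet energies.
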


\begin{proof}
Since
$\div_a \widehat{L_X \delta} = {X_{a|b}}^{|b}$,
we take $X \in C^{k+2, \alpha, \beta'+1}(T\Omega)$
to be a solution to
${X_{a|b}}^{|b}=-\widehat{F}_{ab}^{\;\;\;|b}$.
We may now assume that $\Omega$ is unbounded 
and $\beta'<n-1$,
since otherwise,
by the divergence theorem
(and the at most linear growth of the Killing fields),
we are already done.
Write $K$ for the Killing operator
taking a vector field $Y$
to $Y_{a|b}+Y_{b|a}$,
and
for any Killing field $Y$ on $\R^n$
write $\overline{Y}$ for the vector field
agreeing with $Y$ on $\partial \Omega$ (including
components normal to $\partial \Omega$)
and having
bounded harmonic Cartesian components on $\Omega$.
For any two Killing fields $Y$ and $Z$ on $\R^n$
consider the pairing
\begin{equation}
  \mathcal{A}(Y, Z)
  :=
  \int_{\partial \Omega}
  \widehat{\left(K\overline{Y}\right)}_{ab}Z^aN^b
  =
  \int_{\Omega}
    \widehat{\left(K\overline{Y}\right)}_{ab}
      \overline{Z}^{a|b}
  =
  \frac{1}{2}\int_{\Omega}
    \widehat{\left(K\overline{Y}\right)}_{ab}
    \left(K\overline{Z}\right)^{ab},
\end{equation}
where the integral at infinity vanishes
because of the decay of
$\overline{Y}$ and $\overline{Z}$.
In particular $\mathcal{A}$ is symmetric.
Next, using the fact that $\widehat{KY}=0$,
we find
\begin{equation}
  \widehat{\left(K\overline{Y}\right)}_{ab}Z^aN^b
  =
  \left(\overline{Y}_{a|b}-Y_{a|b}\right)Z^aN^b,
\end{equation}
and consequently
\begin{equation}
  \mathcal{A}(Y,Z)
  =
  \int_{\Omega} \overline{Y}_{a|b}\overline{Z}^{a|b}
    + \int_{\R^n \backslash \Omega}
     {Y}_{a|b}Z^{a|b},
\end{equation}
so that $\mathcal{A}$ is positive definite.
Thus we may adjust the $X$ chosen above
by harmonic extensions
of restrictions to $\partial \Omega$
of Killing fields
to complete the arrangement of the self-equilibration
condition.
Note that if $\beta' > n-2$,
then
prior to any such adjustments
$\widehat{(F-KX)}_{ab}N^b$ on $\partial \Omega$
is already orthogonal
to the translational Killing fields,
ensuring the estimate in all cases.
\end{proof}

\begin{bibdiv}
\begin{biblist}

\bib{An}{article}{
    title={Ellipticity of Bartnik boundary data
           for stationary vacuum spacetimes},
    author={An, Zhongshan},
    journal={Communications in Mathematical Physics},
    volume={373},
    number={3},
    pages={859--906},
    year={2020}
}

\bib{AnMin}{article}{
  title={On mass-minimizing extensions of Bartnik boundary data},
  author={An, Zhongshan},
  journal={to appear in
           Communications in Analysis and Geometry},
  eprint={arXiv:2007.05452},
}

\bib{AnHuang}{article}{
  title={Existence of static vacuum extensions
         with prescribed Bartnik boundary data},
  author={An, Zhongshan},
  author={Huang, Lan-Hsuan},
  journal={Cambridge Journal of Mathematics},
  year={2022},
  volume={10},
  number={1},
  pages={1--68}
}

\bib{AndBdy}{article}{
  title={On boundary value problems for Einstein metrics},
  author={Anderson, Michael T.},
  journal={Geometry and Topology},
  volume={12},
  number={4},
  date={2008},
  pages={2009--2045}
}

\bib{AndLoc}{article}{
  title={Local existence and uniqueness for exterior static vacuum Einstein metrics},
  author={Anderson, Michael T.},
  journal={Proceedings of the American Mathematical Society},
  volume={143},
  date={2015},
  pages={3091-3096}
}

\bib{AndSurvey}{article}{
    title={Recent progress and problems on the Bartnik quasi-local mass},
    author={Anderson, Michael T.},
    journal={Pure and Applied Mathematics Quarterly},
  volume={15},
  number={3},
  pages={851--873},
  year={2019},
  publisher={International Press of Boston}
}

\bib{AJ}{article}{
  title={Embeddings, immersions and the Bartnik quasi-local mass conjectures},
  author={Anderson, Michael T.},
  author={Jauregui, Jeffrey L.},
  journal={Annales Henri Poincar{\'e}},
  volume={20},
  number={5},
  pages={1651--1698},
  year={2019},
}

\bib{AndKhu}{article}{
  title={On the Bartnik extension problem for the static vacuum Einstein equations},
  author={Anderson, Michael T.},
  author={Khuri, Marcus A.},
  journal={Classical and Quantum Gravity},
  volume={30},
  number={12},
  date={2013}
}

\bib{ADM}{article}{
title={Republication of: The dynamics of general relativity},
  author={Arnowitt, Richard},
  author={Deser, Stanley},
  author={Misner, Charles W.},
  journal={General Relativity and Gravitation},
  volume={40},
  number={9},
  pages={1997--2027},
  year={2008}
}

\bib{BarADM}{article}{
  title={The mass of an asymptotically flat manifold},
  author={Bartnik, Robert},
  journal={Communications on Pure and Applied Mathematics},
  volume={39},
  number={5},
  pages={661--693},
  year={1986},
}

\bib{BarNew}{article}{
  title={New definition of quasilocal mass},
  author={Bartnik, Robert},
  journal={Physical Review Letters},
  volume={62},
  number={20},
  pages={2346},
  year={1989},
}

\bib{BarTHL}{collection.article}{
  title={Energy in general relativity},
  author={Bartnik, Robert},
  book={
    title={Tsing Hua Lectures on Geometry and Analysis},
    editor={Shing-Tung Yau},
    date={1997},
    publisher={International Press}
  },
  pages={5--27},
}

\bib{BarICM}{collection.article}{
  title = {Mass and 3-metrics of non-negative scalar curvature},
  author = {Bartnik, Robert},
  book={
    year = {2002},
    volume = {II},
    editor = {Li Tatsein},
    title = {Proceedings ICM 2002}
  },
  pages = {231 -- 240},
}

\bib{Bray}{article}{
  title={Proof of the Riemannian Penrose inequality using the positive mass theorem},
  author={Bray, Hubert L.},
  journal={Journal of Differential Geometry},
  volume={59},
  number={2},
  pages={177--267},
  year={2001},
}

\bib{BrownLauYork}{article}{
  title={Canonical quasilocal energy and small spheres},
  author={Brown, J. David},
  author={Lau, Stephen R.},
  author={York Jr., James W.},
  journal={Physical Review D},
  volume={59},
  number={6},
  pages={064028},
  year={1999},
  publisher={APS}
}

\bib{BY}{article}{
  title={Quasilocal energy and conserved charges derived from the gravitational action},
  author={Brown, J. David},
  author={York Jr., James W.},
  journal={Physical Review D},
  volume={47},
  number={4},
  pages={1407},
  year={1993},
  publisher={APS}
}

\bib{CPC}{article}{
    title={A survey on extensions of Riemannian manifolds 
           and Bartnik mass estimates},
    author={Cabrera Pacheco, Armando J.},
    author={Cederbaum, Carla},
    conference={
      title={Mexican mathematicians in the world---trends and recent
      contributions},
   },
   book={
      series={Contemp. Math.},
      volume={775},
      publisher={Amer. Math. Soc., [Providence], RI},
   },
   date={2021},
   pages={1--30},
}

\bib{CPCMM}{article}{
   author={Cabrera Pacheco, Armando J.},
   author={Cederbaum, Carla},
   author={McCormick, Stephen},
   author={Miao, Pengzi},
   title={Asymptotically flat extensions of CMC Bartnik data},
   journal={Classical and Quantum Gravity},
   volume={34},
   date={2017},
   number={10},
   pages={105001, 15}
}

\bib{ChauMartensDegenerate}{article}{
  title={Exterior Schwarzschild initial data
         for degenerate apparent horizons},
  author={Chau, Albert},
  author={Martens, Adam},
  date={2020},
  eprint={arXiv:2004.09060},
}

\bib{Chen}{article}{
  title={Small sphere limit of the quasi-local energy 
         with anti de-Sitter space reference},
  author={Chen, Po-Ning},
  journal={Advances in Theoretical and Mathematical Physics},
  year={2020},
  volume={24},
  number={4},
  pages={879--924}
}

\bib{ChenWangYau}{article}{
  title={Evaluating small sphere limit of the Wang--Yau quasi-local energy},
  author={Chen, Po-Ning},
  author={Wang, Mu-Tao},
  author={Yau, Shing-Tung},
  journal={Communications in Mathematical Physics},
  volume={357},
  number={2},
  pages={731--774},
  year={2018},
}

\bib{CB}{book}{
  title={General relativity and the Einstein equations},
  author={Choquet-Bruhat, Yvonne},
  year={2009},
  publisher={Oxford University Press}
}

\bib{Chr}{collection.article}{
  title={Boundary conditions at spatial infinity},
  author={Chru{\'s}ciel, Piotr},
  book={
    editor={Bergmann, Peter G.},
    editor={De Sabbata, Venzo},
    title={Topological Properties and Global Structure of Space-time},
  year={1986},
  publisher={Springer}
  },
  pages={49--59}
}

\bib{CorDef}{article}{
  title={Scalar curvature deformation and a gluing construction for the Einstein constraint equations},
  author={Corvino, Justin},
  journal={Communications in Mathematical Physics},
  volume={214},
  pages={137--189},
  date={2000}
}

\bib{CorBarNote}{collection.article}{
    AUTHOR = {Corvino, Justin},
    TITLE = {A note on the {B}artnik mass},
    BOOK = {
      title = {Nonlinear analysis in geometry and applied mathematics},
      series = {Harv. Univ. Cent. Math. Sci. Appl. Ser. Math.},
      VOLUME = {1},
      PUBLISHER = {International Press},
    },
    PAGES = {49--75},
    YEAR = {2017},
}

\bib{CorBarShort}{article}{
  AUTHOR = {Corvino, Justin},
     TITLE = {A short note on the {B}artnik mass},
  JOURNAL = {Pure and Applied Mathematics Quarterly},
    VOLUME = {15},
      YEAR = {2019},
    NUMBER = {3},
     PAGES = {827--838},
      ISSN = {1558-8599},
}

\bib{Delay}{article}{
  author = { Erwann   Delay },
  title = {Smooth Compactly Supported Solutions of Some Underdetermined Elliptic PDE, with Gluing Applications},
  journal = {Communications in Partial Differential Equations},
  volume = {37},
  number = {10},
  pages = {1689-1716},
  year  = {2012}
}

\bib{FanShiTam}{article}{
  title={Large-sphere and small-sphere limits of the Brown-York mass},
  author={Fan, Xu-Qian},
  author={Shi, Yuguang},
  author={Tam, Luen-Fai},
  journal={Communications in Analysis and Geometry},
  volume={17},
  number={1},
  pages={37--72},
  date={2009}
}

\bib{Gurtin}{collection.article}{
  author={Gurtin, Morton E.},
  title={The Linear Theory of Elasticity},
  pages={1--295},
  book={
    title={Mechanics of Solids II},
    subtitle={
              Linear Theories of Elasticity
              and Thermoelasticity, Linear
              and Nonlinear Theories of Rods, Plates,
              and Shells},
    editor={Truesdell, C.},
    year={1972},
    publisher={Springer-Verlag}
  }
}

\bib{HarvieWang}{article}{
  title={The Mass of the Static Extension of Small Spheres},
  author={Harvie, Brian},
  author={Wang, Ye-Kai},
  journal = {International Mathematics Research Notices},
  date = {2023},
  doi = {10.1093/imrn/rnad045}, 
}

\bib{HMass}{article}{
  title={Gravitational radiation in an expanding universe},
  author={Hawking, Stephen W.},
  journal={Journal of Mathematical Physics},
  volume={9},
  number={4},
  pages={598--604},
  year={1968},
  publisher={American Institute of Physics}
}

\bib{Herglotz}{collection.article}{
  title={{\"U}ber die Steinersche Formel f{\"u}r Parallelfl{\"a}chen},
  author={Herglotz, Gustav},
  book={
    title={Abhandlungen aus dem Mathematischen Seminar der Universit{\"a}t Hamburg},
    volume={15},
    number={1},
    year={1943},
    organization={Springer}
  },
  pages={165--177}
}

\bib{HorSch}{article}{
  title={Note on gravitational energy},
  author={Horowitz, G.T.},
  author={Schmidt, B.G.},
  journal={Proceedings of the Royal Society of London A: Mathematical, Physical and Engineering   Sciences},
  volume={381},
  number={1780},
  pages={215--224},
  year={1982},
}

\bib{HMM}{article}{
  title={Static potentials and area minimizing hypersurfaces},
  author={Huang, Lan-Hsuan},
  author={Martin, Daniel},
  author={Miao, Pengzi},
  journal={Proceedings of the American Mathematical Society},
  volume={146},
  number={6},
  pages={2647--2661},
  year={2018}
}

\bib{HL}{article}{
  title={Bartnik mass minimizing initial data sets and improvability of the dominant energy scalar},
  author={Huang, Lan-Hsuan},
  author={Lee, Dan A},
  journal={J. Differential Geom.},
  volume={126},
  date={2024},
  number={2},
  pages={741--800},
}

\bib{HI}{article}{
  title={The inverse mean curvature flow and the Riemannian Penrose inequality},
  author={Huisken, Gerhard},
  author={Ilmanen, Tom},
  journal={Journal of Differential Geometry},
  volume={59},
  number={3},
  pages={353--437},
  year={2001},
}

\bib{IO}{article}{
  author={Isett, Philip},
  author={Oh, Sung-Jin},
  title={On Nonperiodic Euler Flows
         with Hölder Regularity},
  journal={Archive for Rational Mechanics and Analysis},
  volume={221},
  number={2},
  pages={725--804},
  year={2016}
}

\bib{Jdef}{article}{
    title={Smoothing the Bartnik boundary conditions
           and other results on Bartnik's quasi-local mass},
    author={Jauregui, Jeffrey L.},
    journal={Journal of Geometry and Physics},
    volume={136},
    year={2019},
    pages={228--243}
}

\bib{MS}{article}{
  title={On the Bartnik mass of apparent horizons},
  author={Mantoulidis, Christos},
  author={Schoen, Richard},
  journal={Classical and Quantum Gravity},
  volume={32},
  number={20},
  pages={205002},
  year={2015},
}

\bib{McDef}{article}{
    title={Gluing Bartnik extensions, continuity of the Bartnik mass,
           and the equivalence of definitions},
    author={McCormick, Stephen},
  journal={Pacific Journal of Mathematics},
  volume={304},
  number={2},
  pages={629--653},
  year={2020}
}

\bib{MiaoCorners}{article}{
  title={Positive Mass Theorem on manifolds admitting corners along a hypersurface},
  author={Miao, Pengzi},
  journal={Advances in Theoretical and Mathematical Physics},
  volume={6},
  number={6},
  date={2002},
  pages={1163--1182}
}

\bib{MiaoStatExt}{article}{
  title={On existence of static metric extensions in general relativity},
  author={Miao, Pengzi},
  journal={Communications in Mathematical Physics},
  volume={241},
  number={1},
  date={2003},
  pages={27--46}
}

\bib{MiaoVarEff}{collection.article}{
  title={Variational effect of boundary mean curvature on ADM mass in general relativity},
  author={Miao, Pengzi},
  book={
    title={Mathematical Physics Research on the Leading Edge},
    editor={Benton, Charles V.},
    publisher={Nova Science Publishers},
    address={Hauppauge, New York},
    year={2004}
  },
  pages={145--171}
}

\bib{MiaoBdyEffStat}{article}{
  title={A remark on boundary effects in static vacuum initial data sets},
  author={Miao, Pengzi},
  journal={Classical and Quantum Gravity},
  volume={22},
  number={11},
  pages={L53},
  year={2005},
}

\bib{Munkres}{article}{
  title={Differentiable isotopies on the 2-sphere},
  author={Munkres, James},
  journal={Michigan Mathematical Journal},
  volume={7},
  number={3},
  pages={193--197},
  year={1960}
}

\bib{Nir}{article}{
  title={The Weyl and Minkowski problems in differential geometry in the large},
  author={Nirenberg, Louis},
  journal={Communications on pure and applied mathematics},
  volume={6},
  number={3},
  pages={337--394},
  year={1953},
}

\bib{OM}{article}{
  title={Total energy momentum in general relativity},
  author={Murchadha, Niall {\'O}},
  journal={Journal of Mathematical Physics},
  volume={27},
  number={8},
  pages={2111--2128},
  year={1986}
}

\bib{Palais}{article}{
  title={Extending diffeomorphisms},
  author={Palais, Richard S.},
  journal={Proceedings of the American Mathematical Society},
  volume={11},
  number={2},
  pages={274--277},
  year={1960},
}

\bib{Pog}{book}{
  title={Extrinsic geometry of convex surfaces},
  author={Pogorelov, A.V.},
  volume={35},
  year={1973},
  publisher={American Mathematical Society}
}

\bib{Sacksteder}{article}{
  title={The rigidity of hypersurfaces},
  author={Sacksteder, Richard},
  journal={Journal of Mathematics and Mechanics},
  pages={929--939},
  year={1962},
}

\bib{SY}{article}{
  title={On the proof of the positive mass conjecture in general relativity},
  author={Schoen, Richard},
  author={Yau, Shing Tung},
  journal={Communications in Mathematical Physics},
  volume={65},
  number={1},
  year={1979},
  pages={45--76}
}

\bib{ShiTam}{article}{
  title={Positive mass theorem and the boundary behaviors
    of compact manifolds with nonnegative scalar curvature},
  author={Shi, Yuguang},
  author={Tam, Luen-Fai},
  journal={Journal of Differential Geometry},
  volume={62},
  number={1},
  pages={79--125},
  year={2002},
}

\bib{Smale}{article}{
  title={Diffeomorphisms of the 2-Sphere},
  author={Smale, Stephen},
  journal={Proceedings of the American Mathematical Society},
  volume={10},
  number={4},
  pages={621--626},
  year={1959}
}

\bib{Sza}{article}{
  title={Quasi-local energy-momentum and angular momentum in general relativity},
  author={Szabados, L.B.},
  journal={Living Reviews in Relativity},
  volume={12},
  number={4},
  date={2009}
}

\bib{WangJ}{article}{
        year = {2020},
        volume = {37},
        number = {8},
        pages = {085004},
        author = {Wang, Jinzhao},
        title = {The small sphere limit of quasilocal energy in higher dimensions along lightcone cuts},
        journal = {Classical and Quantum Gravity}
}

\bib{Wit}{article}{
  title={A new proof of the positive energy theorem},
  author={Witten, Edward},
  journal={Communications in Mathematical Physics},
  volume={80},
  number={3},
  year={1981},
  pages={381--402},
}

\bib{Wbmoss}{article}{
  title={The Bartnik-Bray outer mass of small metric spheres in time-symmetric 3-slices},
  author={Wiygul, David},
  journal={Communications in Mathematical Physics},
  volume={358},
  number={1},
  year={2018},
  pages={269--293}
}

\bib{York}{article}{
  title={Conformally invariant orthogonal decomposition of symmetric tensors on Riemannian manifolds and the initial-value problem of general relativity},
  author={York Jr., James W.},
  journal={Journal of Mathematical Physics},
  volume={14},
  number={4},
  pages={456--464},
  year={1973},
  publisher={American Institute of Physics}
}

\bib{Yu}{article}{
  title={The limiting behavior of the Liu-Yau quasi-local energy},
  author={Yu, P.P.},
  journal={Preprint},
  eprint={arXiv:0706.1081},
  year={2007},
}

\end{biblist}
\end{bibdiv}

\end{document}